\def\address#1#2{\begingroup
\noindent\parbox[t]{9cm}{%
\small{\scshape\ignorespaces#1}\par\vskip1ex
\noindent\small{\itshape E-mail address}%
\/: #2\par\vskip4ex}\hfill%
\endgroup}%
\theoremstyle{plain}
\newtheorem{theorem}{\indent\sc Theorem}[section]
\newtheorem{lemma}[theorem]{\indent\sc Lemma}
\newtheorem{corollary}[theorem]{\indent\sc Corollary}
\newtheorem{proposition}[theorem]{\indent\sc Proposition}
\newtheorem{conjecture}[theorem]{\indent\sc Conjecture}
\newtheorem{maintheorem}{Theorem}
\theoremstyle{definition}
\newtheorem{definition}[theorem]{\indent\sc Definition}
\newtheorem{remark}[theorem]{\indent\sc Remark}
\title{\uppercase{Weil-\'{e}tale cohomology and duality for arithmetic schemes
    in negative weights}}
\author{\textsc{Alexey Beshenov}}
\date{}
\DeclareMathOperator{\Spec}{Spec}
\DeclareMathOperator{\Gal}{Gal}
\DeclareMathOperator{\Hom}{Hom}
\DeclareMathOperator{\Ext}{Ext}
\DeclareMathOperator{\Tot}{Tot}
\DeclareMathOperator{\coker}{coker}
\newcommand{\CC}{\mathbb{C}}
\newcommand{\FF}{\mathbb{F}}
\newcommand{\QQ}{\mathbb{Q}}
\newcommand{\RR}{\mathbb{R}}
\newcommand{\ZZ}{\mathbb{Z}}
\renewcommand{\AA}{\mathbb{A}}
\renewcommand{\div}{\text{\it div}}
\newcommand{\tor}{\text{\it tor}}
\newcommand{\cotor}{\text{\it cotor}}
\newcommand{\dfn}{\mathrel{\mathop:}=}
\newcommand{\rdfn}{=\mathrel{\mathop:}}
\newcommand{\Wc}{\text{\it W,c}}
\newcommand{\ar}{\text{\it ar}}
\newcommand{\et}{\text{\it \'{e}t}}
\newcommand{\fg}{\text{\it fg}}
\newcommand{\iHom}{\underline{\Hom}}
\newcommand{\RHom}{R\!\Hom}
\newcommand{\DZ}{{\mathbf{D} (\ZZ)}}
\newcommand{\tikzpb}{\ar[phantom,pos=0.2]{dr}{\text{\large$\lrcorner$}}}
\newcommand{\tikzpbur}{\ar[phantom,pos=0.2]{dl}{\text{\large$\llcorner$}}}
\begin{document}

\maketitle

\footnote{ 
  2010 \textit{Mathematics Subject Classification}.
  Primary 14F20; Secondary 14F42.}
\footnote{ 
\textit{Key words and phrases}.
Motivic cohomology, \'{e}tale cohomology, Weil-\'{e}tale cohomology.}


\begin{abstract}
  Flach and Morin (2018) constructed Weil-\'{e}tale cohomology
  $H^i_\Wc (X, \ZZ(n))$ for a proper, regular arithmetic scheme $X$
  (i.e. separated and of finite type over $\Spec \ZZ$) and $n \in \ZZ$.
  In the case when $n < 0$, we generalize their construction to
  an arbitrary arithmetic scheme $X$, thus removing the proper
  and regular assumption. The construction uses \'{e}tale motivic cohomology
  groups $H^i (X_\et, \ZZ^c (n))$, as studied by Geisser (2010),
  and assumes their finite generation for $n < 0$. We give a class of $X$
  for which finite generation is known, and hence $H^i_\Wc (X, \ZZ(n))$
  is defined unconditionally.
\end{abstract}

\section{Introduction}

Lichtenbaum, in a series of papers
\cite{Lichtenbaum-2005,Lichtenbaum-2009-Euler-char,Lichtenbaum-2009-number-rings},
has envisioned a new cohomology theory for schemes, known as
\textbf{Weil-\'{e}tale cohomology}. The case of varieties over finite fields
$X/\FF_q$ was further studied by Geisser
\cite{Geisser-2004,Geisser-2006,Geisser-2010-arithmetic-homology}. Morin
defined in \cite{Morin-2014} Weil-\'{e}tale cohomology with compact support
$H^i_\Wc (X, \ZZ)$ for $X \to \Spec\ZZ$ separated, of finite type, proper, and
regular. This construction was further generalized by Flach and Morin in
\cite{Flach-Morin-2018} to the groups $H^i_\Wc (X, \ZZ(n))$ with arbitrary
weights $n \in \ZZ$, under the same assumptions on $X$.

The aim of this paper is to remove the assumption that $X$ is proper and
regular and, following the ideas of \cite{Flach-Morin-2018}, to construct the
groups $H^i_\Wc (X, \ZZ(n))$ for any $X$ separated and of finite type over
$\Spec\ZZ$ for the case of strictly negative weights $n < 0$.

As Flach and Morin already suggest in \cite[Remark 3.11]{Flach-Morin-2018}, we
rework all their constructions in terms of cycle complexes $\ZZ^c (n)$, which
were considered by Geisser in \cite{Geisser-2010} in the context of arithmetic
duality theorems.

In a forthcoming paper we apply the results of this text to relate the
cohomology groups $H_\Wc^i (X, \ZZ(n))$ to the special value of the zeta
function $\zeta (X, s)$ at $s = n < 0$.

\subsection*{Notation and conventions}

\paragraph{Arithmetic schemes.}
In this work, an \textbf{arithmetic scheme} is a scheme $X$ that is separated
and of finite type over $\Spec \ZZ$.

\paragraph{Abelian groups.}
Let $A$ be an abelian group. For $m \ge 1$ we denote by ${}_m A$ its
$m$-torsion subgroup, and by $A_m$ the quotient $A/mA$: $$0 \to {}_m A \to A
  \xrightarrow{\times m} A \to A_m \to 0$$ We denote by $A_\div$ (resp. $A_\tor$)
the maximal divisible subgroup (resp. maximal torsion subgroup), and by
$A_\cotor$ the quotient $A/A_\tor$ (following the notation in
\cite{Flach-Morin-2018}).

We say that $A$ is of \textbf{cofinite type} if it is $\QQ/\ZZ$-dual to a
finitely generated abelian group: $A = \Hom (B,\QQ/\ZZ)$ for a finitely
generated $B$.

\paragraph{Complexes.}
All our constructions take place in the derived category of abelian groups
$\DZ$. For our purposes, we introduce the following terminology. Recall first
that a complex of abelian groups $A^\bullet$ is \textbf{perfect} if it is
bounded (i.e. $H^i (A^\bullet) = 0$ for $|i| \gg 0$), and $H^i (A^\bullet)$ are
finitely generated abelian groups.

\begin{definition}
  \label{dfn:almost-of-(co)finite-type}
  A complex of abelian groups $A^\bullet$ is \textbf{almost perfect}
  if the cohomology groups $H^i (A^\bullet)$ are finitely generated, and
  bounded, except for possible finite $2$-torsion in arbitrarily high degree.
  That is, $H^i (A^\bullet) = 0$ for $i \ll 0$ and $H^i (A^\bullet)$ is finite
  $2$-torsion for $i \gg 0$.

  A complex of abelian groups $A^\bullet$ is of \textbf{cofinite type} if the
  cohomology groups $H^i (A^\bullet)$ are of cofinite type and bounded.

  A complex of abelian groups $A^\bullet$ is \textbf{almost of cofinite type} if
  the cohomology groups $H^i (A^\bullet)$ are of cofinite type and bounded,
  except for possible finite $2$-torsion in arbitrarily high degree.
\end{definition}

This terminology is ad hoc and was invented for this text, since such complexes
will appear frequently. Some basic observations about almost perfect and almost
cofinite type complexes are collected in
Appendix~\ref{app:homological-algebra}. We note that this finite $2$-torsion in
arbitrarily high degrees could be removed by working with the Artin--Verdier
topology $\overline{X}_\et$ instead of the usual \'{e}tale topology $X_\et$.
The general construction and basic properties of $\overline{X}_\et$ are treated
in \cite[Appendix~A]{Flach-Morin-2018}, but only for a \emph{proper and
  regular} arithmetic scheme $X$. Our methods circumvent this restriction at the
cost of some technical hurdles with $2$-torsion.

\paragraph{\'{E}tale cohomology.} For an arithmetic scheme $X$ and a complex of \'{e}tale
sheaves $\mathcal{F}^\bullet$, we denote by
\[ R\Gamma (X_\et, \mathcal{F}^\bullet) ~
  \text{(resp. }R\Gamma_c (X_\et, \mathcal{F}^\bullet), ~
  R\widehat{\Gamma}_c (X_\et, \mathcal{F}^\bullet)\text{)} \]
the complex that computes the corresponding cohomology, resp. cohomology with
compact support, and modified cohomology with compact support. For the
convenience of the reader, we review the definitions in
Appendix~\ref{app:modified-cohomology-with-compact-support}. The purpose of
$R\widehat{\Gamma}_c (X_\et, \mathcal{F}^\bullet)$ is to take care of real
places $X (\RR)$. There exists a canonical projection $R\widehat{\Gamma}_c
  (X_\et, \mathcal{F}^\bullet) \to R\Gamma_c (X_\et, \mathcal{F}^\bullet)$, which
is an isomorphism if $X (\RR) = \emptyset$.

\paragraph{$G$-equivariant sheaves and their cohomology.}
Let $\mathcal{X}$ be a topological space with an action of a discrete group
$G$. A \textbf{$G$-equivariant sheaf} $\mathcal{F}$ on $\mathcal{X}$ can be
defined as an espace \'{e}tal\'{e} $\pi\colon E\to \mathcal{X}$ with a
$G$-action on $E$ such that the projection $\pi$ is $G$-equivariant (see e.g.
\cite[\S II.6 + pp.\,594]{MacLane-Moerdijk}). We denote by $\mathbf{Sh} (G,
  \mathcal{X})$ the corresponding category.

The equivariant global sections are defined by $$\Gamma
  (G,\mathcal{X},\mathcal{F}) = \mathcal{F} (\mathcal{X})^G,$$ with $G$ acting on
$\mathcal{F} (\mathcal{X}) = \{ s\colon \mathcal{X}\to E \mid \pi\circ s =
  id_\mathcal{X} \}$ via $(g\cdot s) (x) = g\cdot s (g^{-1}\cdot x)$. The
corresponding \textbf{$G$-equivariant cohomology} is given by the right derived
functors of $\Gamma (G,\mathcal{X},-)$.

More details on $G$-equivariant sheaves can be found in
\cite[Chapitre~2]{Morin-these}. For our modest purposes, it suffices to know
that any $G$-module $A$ gives rise to the corresponding abelian $G$-equivariant
constant sheaf. The latter corresponds to the espace \'{e}tal\'{e}
$\mathcal{X}\times A \to \mathcal{X}$, where $A$ is endowed with the discrete
topology.

\paragraph{$G_\RR$-equivariant cohomology of $X (\CC)$.}
Given an arithmetic scheme $X$, we denote by $X (\CC)$ the set of complex
points of $X$ endowed with the analytic topology. It carries the natural action
of the Galois group $G_\RR \dfn \Gal (\CC/\RR)$.

We consider the $G_\RR$-modules
\[ \ZZ (n) \dfn (2\pi i)^n\,\ZZ, \quad
  \QQ (n) \dfn (2\pi i)^n\,\QQ, \quad
  \QQ/\ZZ (n) \dfn \QQ (n) / \ZZ (n) \]
as constant $G_\RR$-equivariant sheaves on $X (\CC)$.

Then $R\Gamma_c (X (\CC), A (n))$ for $A = \ZZ, \QQ, \QQ/\ZZ$ (the complex that
computes singular cohomology with compact support of $X (\CC)$ with
coefficients in $A (n)$) is a complex of $G_\RR$-modules, and we can further
take the group cohomology (resp. Tate cohomology):
\begin{align*}
  R\Gamma_c (G_\RR, X (\CC), A (n))           & \dfn R\Gamma (G_\RR, R\Gamma_c (X (\CC), A (n))),           \\
  R\widehat{\Gamma}_c (G_\RR, X (\CC), A (n)) & \dfn R\widehat{\Gamma} (G_\RR, R\Gamma_c (X (\CC), A (n))).
\end{align*}
By definition, this is the \textbf{$G_\RR$-equivariant cohomology}
(resp. \textbf{$G_\RR$-equivariant Tate cohomology})
\textbf{with compact support} of $X (\CC)$ with coefficients in $A (n)$.

\paragraph{Motivic cohomology $H^i (X_\et, \ZZ^c (n))$.}
Our construction is based on motivic cohomology defined in terms of complexes
of sheaves $\ZZ^c (n)$ on $X_\et$. We follow the notation of
\cite{Geisser-2010}.

Briefly, for $i \ge 0$ we consider the algebraic simplex $$\Delta^i = \Spec
  \ZZ[t_0,\ldots,t_i]/(\sum_i t_i - 1).$$ We fix a non-positive weight $n \le 0$.
Let $z_n (X,i)$ be the free abelian group generated by the closed integral
subschemes $Z \subset X \times \Delta^i$ of dimension $n + i$ that intersect
the faces properly. Then $z_n (X, \bullet)$ is a (homological) complex of
abelian groups whose differentials are given by the alternating sum of
intersections with the faces. We consider the (cohomological) complex of
\'{e}tale sheaves $$\ZZ^c (n) \dfn z_n (\text{\textvisiblespace}, -\bullet)
  [2n].$$

The boundedness from below of $\ZZ^c(n)$ is not known in general; it is a
variant of the Beilinson--Soul\'{e} vanishing conjecture. To work
unconditionally with the derived functors, we use $K$-injective resolutions
\cite{Spaltenstein-1988,Serpe-2003} (resp. $K$-flat resolutions for the derived
tensor products).

To avoid any confusion, we use cohomological numbering for all complexes in
this paper, so we set $$H^i (X_\et, \ZZ^c(n)) \dfn H^i (R\Gamma (X_\et,
  \ZZ^c(n))).$$ (\cite{Geisser-2010} uses homological numbering.)

\subsection*{Assumptions}

\paragraph{Weights.}
In this paper, $n < 0$ always denotes a strictly negative integer, which will
be the weight in the cohomology groups $H^i_\Wc (X, \ZZ(n))$.

\paragraph{Finite generation conjecture.}
Our construction of the Weil-\'{e}tale cohomology groups $H^i_\Wc (X,\ZZ(n))$
uses the following assumption.

\begin{conjecture}
  $\mathbf{L}^c (X_\et,n)$: for an arithmetic scheme $X$ and $n < 0$,
  the cohomology groups $H^i (X_\et, \ZZ^c (n))$ are finitely generated for all
  $i \in \ZZ$.
\end{conjecture}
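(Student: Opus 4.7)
My plan is a two-step d\'evissage to reduce to the case of a proper regular $X$, followed by an appeal to finite generation of positive-weight \'etale motivic cohomology via the comparison~(\ref{eqn:Zc(n)-vs-Z(d-n)}). First, given any open-closed decomposition $U \hookrightarrow X \hookleftarrow Z$, the sheaf $\ZZ^c(n)$ fits into a localization triangle on $X_\et$ (dimensional, not codimensional, since $z_n$ is built from cycles of a fixed dimension $n+i$), and the resulting long exact sequence in cohomology enables Noetherian induction on $\dim X$. Combining this with Nagata compactification reduces $\mathbf{L}^c(X_\et, n)$ to an integral proper arithmetic $X$. To eliminate singularities, I would then apply de Jong's alteration theorem to produce a proper surjective $f \colon \tilde X \to X$ with $\tilde X$ proper and regular, and propagate finite generation from $\tilde X$ to $X$ via an h- or cdh-descent spectral sequence for $\ZZ^c(n)$, combined with a second induction on the closed locus where $f$ fails to be an isomorphism.

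Second, once $X$ is proper, regular and of pure dimension $d$, I would rewrite the claim using~(\ref{eqn:Zc(n)-vs-Z(d-n)}) as finite generation of $H^{2d+i}(X_\et, \ZZ(d-n))$ at the strictly positive weight $d - n \ge d+1$. For the torsion part, I would combine the Beilinson--Lichtenbaum theorem with SGA~4 finiteness of \'etale cohomology with finite coefficients on arithmetic schemes: in the Bloch--Kato range, these together identify the $\ell^r$-torsion of $H^\ast (X_\et, \ZZ(d-n))$ with $H^\ast (X_\et, \mu_{\ell^r}^{\otimes (d-n)})$, which is finite. The free part must then be controlled by comparison with algebraic $K$-theory (via the motivic spectral sequence) and a weight argument on the eigenvalues of Frobenius after reduction to residue characteristics.

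The decisive obstacle is the \emph{divisible} part of the argument: one must show both that $H^i (X_\et, \ZZ^c(n)) \otimes \QQ$ is finite-dimensional and that the groups contain no nontrivial divisible subgroup. In mixed characteristic this is tantamount to the Bass finite generation conjecture for algebraic $K$-theory together with Tate--Beilinson-type statements, and is open in any real generality. Accordingly, I expect that $\mathbf{L}^c(X_\et, n)$ has to remain a working hypothesis in the style of \cite{Flach-Morin-2018}, verifiable unconditionally only in special cases such as $X = \Spec \mathcal{O}_K$, smooth curves over rings of integers, or smooth projective varieties over $\FF_q$ where the Tate conjecture is available (and after inverting the residue characteristics to make the h-descent step of the first reduction unproblematic). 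A secondary technical obstacle is that alterations introduce inseparable degrees, so care with the primes dividing these degrees will be needed throughout the first reduction.
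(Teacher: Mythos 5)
The statement you set out to prove is not a theorem of the paper: it is Conjecture $\mathbf{L}^c (X_\et,n)$, the standing hypothesis under which all of the paper's constructions are carried out. The paper offers no proof, and none is known; your closing assessment that it must remain a working hypothesis in the style of \cite{Flach-Morin-2018} is exactly the paper's own stance. What the paper does instead is (i) show in Proposition~\ref{prop:Lc-Xet-n-vs-L-Xet-d-n} that for proper regular $X$ the conjecture is equivalent to $\mathbf{L} (X_\et, d-n)$ of Flach--Morin and $\mathbf{L}' (X_\et, d-n)$ of Geisser, using the comparison \eqref{eqn:Zc(n)-vs-Z(d-n)} exactly as in your second step, and (ii) record in \S\ref{sec:known-cases-of-Lc-Xet-n} the unconditional cases: the class $\mathcal{L} (\ZZ)$ generated by number rings and by varieties in $A (\FF_q)$, closed under open-closed decompositions, finite disjoint unions, affine bundles and finite \'{e}tale covers, together with geometrically cellular schemes, all by Morin's arguments. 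Your first d\'{e}vissage (localization triangles for $\ZZ^c (n)$ plus induction) is precisely the mechanism behind these closure properties, so that portion of your sketch reproduces the paper's route to the known cases rather than a proof of the general statement.

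Two places where your sketch would genuinely break if pressed into a proof. First, the alteration step: h- or cdh-descent for $\ZZ^c (n)$ with \emph{integral} coefficients is not available; descent along a proper surjective $\tilde{X} \to X$ of generic degree $\delta$ only controls cohomology after inverting $\delta$, and in mixed characteristic one cannot arrange $\delta$ prime to all residue characteristics simultaneously, so finite generation does not propagate from $\tilde{X}$ to $X$ integrally (your remark about inseparable degrees understates this: it is not a technicality but a failure of the reduction). Second, in the proper regular case the comparison of $\ZZ (d-n)/\ell^r$ with $\mu_{\ell^r}^{\otimes (d-n)}$ only controls cohomology with finite coefficients; it bounds neither the rank of the free part nor the corank of the divisible part, and that is precisely the content of the conjecture (Bass/Tate--Beilinson-type finiteness), which you correctly identify as the decisive open obstacle. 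So there is no proof to compare yours against: the honest deliverables here are the equivalence (i) and the unconditional class (ii), which is what the paper provides.
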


See Proposition~\ref{prop:equivalent-conjectures} for consistency of
$\mathbf{L}^c (X_\et,n)$ with other conjectures that appear in the literature.
We refer to \S\ref{sec:known-cases-of-Lc-Xet-n} for the cases where the
conjecture is known.

\subsection*{Main results}

Here we state the main results of this paper that are needed for the
construction of Weil-\'{e}tale cohomology. One of our main objects is the
following complex of abelian sheaves $\ZZ (n)$ on $X_\et$.

\begin{definition}[{\cite[\S 3.1]{Flach-Morin-2018}}, {\cite[\S 7]{Geisser-2004}}]
  \label{dfn:sheaf-Z(n)}
  Let $X$ be an arithmetic scheme and $n < 0$. For a prime $p$, consider
  the localization $X [1/p]$, and let $\mu_{p^r}$ be the sheaf of $p^r$-th
  roots of unity on $X [1/p]$. We define the twist of $\mu_{p^r}$ by $n$ as
  $$\mu_{p^r}^{\otimes n} = \iHom_{X[1/p]} (\mu_{p^r}^{\otimes (-n)}, \ZZ/p^r\ZZ).$$

  Now $\ZZ (n)$ is the complex of sheaves on $X_\et$ given by
  \[ \ZZ (n) = \QQ/\ZZ (n) [-1],
    \quad \text{where }
    \QQ/\ZZ (n) = \bigoplus_p \varinjlim_r j_{p!} \mu_{p^r}^{\otimes n}, \]
  and $j_p$ is the canonical open immersion $X[1/p] \to X$.
\end{definition}

The above sheaves $\ZZ (n)$ should not be confused with cycle complexes; the
latter are $\ZZ^c (n)$ in the context of this paper. In
\S\ref{sec:arithmetic-duality-theorem} we prove the following arithmetic
duality theorem relating the two.

\begin{maintheorem}
  \label{theorem-I}
  Assuming Conjecture $\mathbf{L}^c (X_\et,n)$, for $n < 0$,
  there is a quasi-isomorphism
  \[ R\widehat{\Gamma}_c (X_\et, \ZZ (n)) \xrightarrow{\cong}
    \RHom (R\Gamma (X_\et, \ZZ^c (n)), \QQ/\ZZ [-2]). \]
\end{maintheorem}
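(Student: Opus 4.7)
The plan is to reduce the statement to arithmetic duality with finite $p^r$-coefficients and then pass to the colimit, the essential geometric input being Geisser-style duality for the cycle complex $\ZZ^c (n)$.

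First I would unfold $\ZZ (n) = \QQ/\ZZ (n)[-1]$ together with the prime decomposition $\QQ/\ZZ (n) = \bigoplus_p \varinjlim_r j_{p!} \mu_{p^r}^{\otimes n}$, invoking the fact (to be recorded in the appendix on modified cohomology with compact support) that $R\widehat{\Gamma}_c$ commutes with direct sums and filtered colimits, to rewrite the left-hand side as
\[
  R\widehat{\Gamma}_c (X_\et, \ZZ (n)) \;\cong\; \bigoplus_p \varinjlim_r R\widehat{\Gamma}_c (X_\et, j_{p!} \mu_{p^r}^{\otimes n})[-1].
\]
Symmetrically, I would set $C \dfn R\Gamma (X_\et, \ZZ^c (n))$. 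Under $\mathbf{L}^c (X_\et, n)$ the complex $C$ has finitely generated cohomology groups and is almost perfect, so $\RHom (C, -)$ commutes with filtered colimits and direct sums; combined with extension of scalars from $\ZZ$ to $\ZZ/p^r$, the right-hand side becomes
\[
  \RHom (C, \QQ/\ZZ [-2]) \;\cong\; \bigoplus_p \varinjlim_r \RHom_{\ZZ/p^r} (C \otimes^L \ZZ/p^r, \ZZ/p^r [-2]).
\]

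It would then suffice to construct, naturally and compatibly in $(p, r)$, a quasi-isomorphism
\[
  R\widehat{\Gamma}_c (X_\et, j_{p!} \mu_{p^r}^{\otimes n}) \;\xrightarrow{\cong}\; \RHom_{\ZZ/p^r} (C \otimes^L \ZZ/p^r, \ZZ/p^r [-1]).
\]
This is arithmetic duality with finite coefficients, to be deduced from the cup-product pairing $j_{p!} \mu_{p^r}^{\otimes n} \otimes^L (\ZZ^c (n) / p^r) \to \ZZ^c (0) / p^r$ together with a trace map $R\Gamma_c (X_\et, \ZZ^c (0) / p^r) \to \ZZ/p^r [-1]$, in the spirit of \cite{Geisser-2010}. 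The use of $R\widehat{\Gamma}_c$ in place of $R\Gamma_c$ is exactly what makes the archimedean places contribute correctly through Tate cohomology.

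The hard part will be precisely this last step: establishing Geisser-type duality at finite coefficients for a general, non-proper, non-regular arithmetic scheme $X$, and upgrading it from ordinary to modified compact support. Once this is in place, the remaining moves—the primary decomposition of $\QQ/\ZZ (n)$, the commutation of $\RHom (C, -)$ and $R\widehat{\Gamma}_c$ with filtered colimits and direct sums, and extension of scalars—are formal manipulations in $\DZ$, all enabled by the finite generation hypothesis $\mathbf{L}^c (X_\et, n)$.
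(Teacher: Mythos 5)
Your skeleton coincides with the paper's: decompose $\ZZ(n)$ prime by prime, reduce to the finite-coefficient sheaves $j_{p!}\mu_{p^r}^{\otimes n}$, prove a duality there, and use $\mathbf{L}^c(X_\et,n)$ only to identify the colimit of the finite-coefficient duals with $\RHom(R\Gamma(X_\et,\ZZ^c(n)),\QQ/\ZZ[-2])$; by the adjunction $\RHom_{\ZZ/p^r}(C\otimes^\mathbf{L}\ZZ/p^r,\ZZ/p^r[-1])\cong\RHom_\ZZ(C\otimes^\mathbf{L}\ZZ/p^r,\QQ/\ZZ[-1])$ your intermediate statement is equivalent to the one proved in \S\ref{sec:arithmetic-duality-theorem}. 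The genuine gap is that the step you defer as ``the hard part'' is the entire content of the theorem, and your plan for it is miscalibrated. There is nothing to establish about ``Geisser-type duality for a general, non-proper, non-regular $X$, upgraded to modified compact support'': that statement \emph{is} \cite[Theorem~7.8]{Geisser-2010}, proved there for arbitrary separated schemes of finite type over $\ZZ$, with the modified (Tate) compact support already built in (Geisser's $R\Gamma_c$ is this paper's $R\widehat{\Gamma}_c$). What actually has to be done --- and what your sketch of a cup-product pairing $j_{p!}\mu_{p^r}^{\otimes n}\otimes^\mathbf{L}\ZZ^c(n)/p^r\to\ZZ^c(0)/p^r$ plus a trace map leaves untouched --- is the identification of the dual of the coefficient sheaf against the dualizing complex $\ZZ^c(0)$, namely
\[ R\iHom_X\bigl(j_{p!}\mu_{p^r}^{\otimes n}[-1],\,\ZZ^c_X(0)\bigr)\;\cong\;\ZZ^c_X/p^r(n), \]
which is where all the real inputs enter: compatibility of $\ZZ^c$ with $Rf^!_p$ and $Rj_{p*}$ \cite[Prop.~7.10, Thm.~7.2]{Geisser-2010}, the computation $\ZZ^c_{\ZZ[1/p]}(0)\cong\mathbb{G}_\mathrm{m}[1]$ \cite[Lemma~7.4]{Geisser-2010}, the identification of the mod-$p^r$ motivic complex on $\Spec\ZZ[1/p]$ with $\mu_{p^r}^{\otimes(1-n)}$ \cite[Thm.~1.2]{Geisser-2004}, and \eqref{eqn:Zc(n)-vs-Z(d-n)} on the regular scheme $\Spec\ZZ[1/p]$. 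Proving perfection of your proposed pairing would amount to redoing exactly this. You would also need the double-dual flip: Geisser's theorem computes $\RHom(j_{p!}\mu_{p^r}^{\otimes n}[-1],\ZZ^c(0))$ as the $\QQ/\ZZ[-2]$-dual of $R\widehat{\Gamma}_c$, and to obtain the duality in your (and the paper's) direction one applies $\RHom(-,\QQ/\ZZ[-2])$ once more, using finiteness of $\widehat{H}^i_c(X_\et,j_{p!}\mu_{p^r}^{\otimes n})$ for constructible coefficients.

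A smaller but real point: you justify commuting $\RHom(C,-)$ with direct sums and filtered colimits by declaring $C=R\Gamma(X_\et,\ZZ^c(n))$ ``almost perfect.'' Almost perfectness (in particular the boundedness statements) is deduced in the paper only \emph{from} Theorem~\ref{theorem-I} (Proposition~\ref{prop:motivic-cohomology-duality-consequences}), so you cannot assume it at this stage without circularity. Fortunately you do not need it: since $\QQ/\ZZ[-2]$ and $\ZZ/p^r[-2]$ sit in a single degree, $H^i$ of these $\RHom$'s involves only $\Hom$ and $\Ext^1$ out of a single cohomology group of $C$, and finite generation alone --- i.e.\ Conjecture $\mathbf{L}^c(X_\et,n)$ --- suffices. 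This is precisely how the paper handles the colimit, degreewise via the universal-coefficient sequences for $\ZZ/m\ZZ^c(n)$, where finite generation kills $\varinjlim_m\Hom({}_m H^{i+1}(X_\et,\ZZ^c(n)),\QQ/\ZZ)$ and gives $\varinjlim_m\Hom(H^i(X_\et,\ZZ^c(n))_m,\QQ/\ZZ)\cong\Hom(H^i(X_\et,\ZZ^c(n)),\QQ/\ZZ)$.
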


The second result is related to the following morphism of complexes.

\begin{definition}
  \label{dfn:u-infty}
  We define
  $v_\infty^*\colon R\Gamma_c (X_\et, \QQ/\ZZ (n)) \to R\Gamma_c (G_\RR, X
    (\CC), \QQ/\ZZ (n))$ as the morphism in the derived category $\DZ$ induced by
  the comparison of \'{e}tale and analytic topology
  \[ \Gamma_c (X_\et, \QQ/\ZZ (n)) \to
    \Gamma_c (G_\RR, X (\CC), \alpha^* \QQ/\ZZ (n)) \cong
    \Gamma_c (G_\RR, X (\CC), \QQ/\ZZ (n)) \]
  (see Proposition~\ref{prop:inverse-image-gamma} and
  \ref{propn:image-of-Q/Zn-under-alpha}). Then we let
  $u_\infty^*\colon R\Gamma_c (X_\et, \ZZ(n)) \to R\Gamma_c (G_\RR, X (\CC), \ZZ (n))$
  be the composition
  \begin{multline*}
    R\Gamma_c (X_\et, \ZZ(n)) \dfn R\Gamma_c (X_\et, \QQ/\ZZ (n)) [-1]
    \xrightarrow{v_\infty^* [-1]} R\Gamma_c (G_\RR, X (\CC), \QQ/\ZZ (n)) [-1]
    \\ \to R\Gamma_c (G_\RR, X (\CC), \ZZ (n))
  \end{multline*}
  where the last arrow is induced by $\QQ/\ZZ (n) [-1] \to \ZZ (n)$, which comes
  from the distinguished triangle of constant $G_\RR$-equivariant sheaves
  $\ZZ (n) \to \QQ (n) \to \QQ/\ZZ (n) \to \ZZ (n) [1]$.
\end{definition}

\begin{maintheorem}
  \label{theorem-II}
  The morphism
  $u_\infty^*\colon R\Gamma_c (X_\et, \ZZ(n)) \to R\Gamma_c (G_\RR, X (\CC), \ZZ (n))$
  for $n < 0$ is torsion, i.e. there exists a nonzero integer $m$ such that
  $mu^*_\infty = 0$.
\end{maintheorem}

\subsection*{Outline of the paper}

Here we describe the structure of this paper, as well as our construction of
the Weil-\'{e}tale complexes $R\Gamma_\Wc (X, \ZZ (n))$.

First, \S\ref{sec:arithmetic-duality-theorem} is devoted to the proof of
Theorem~\ref{theorem-I}. Some of its consequences are deduced in
\S\ref{sec:consequences-of-theorem-I}. Namely, if we assume Conjecture
$\mathbf{L}^c (X_\et, n)$, then $R\Gamma (X_\et, \ZZ^c (n))$ is an almost
perfect complex, while $R\Gamma_c (X_\et, \ZZ (n))$ is almost of cofinite type
in the sense of Definition~\ref{dfn:almost-of-(co)finite-type}. For this, we
first make a small digression in \S\ref{sec:GR-equivariant-cohomology} to
analyze what kind of complexes we obtain for the $G_\RR$-equivariant cohomology
of $X (\CC)$.

Theorem~\ref{theorem-I} is used in \S\ref{sec:RGamma-fg} to define a morphism
$\alpha_{X,n}$ in the derived category (see Definition~\ref{def:RGamma-fg}),
and declare $R\Gamma_\fg (X, \ZZ(n))$ to be its cone:
\begin{multline*}
  \RHom (R\Gamma (X_\et, \ZZ^c (n)), \QQ [-2]) \xrightarrow{\alpha_{X,n}}
  R\Gamma_c (X_\et, \ZZ (n)) \to
  R\Gamma_\fg (X, \ZZ(n)) \\
  \to \RHom (R\Gamma (X_\et, \ZZ^c (n)), \QQ [-1])
\end{multline*}
The notation ``\emph{fg}'' comes from the fact that $R\Gamma_\fg (X, \ZZ(n))$ is
an almost perfect complex in the sense of
Definition~\ref{dfn:almost-of-(co)finite-type}. Thanks to specific properties of
the complexes involved, it turns out that $R\Gamma_\fg (X, \ZZ(n))$ is defined
up to a \emph{unique} isomorphism in the derived category (which is not normally
expected from a cone).

Then in \S\ref{sec:theorem-II} we establish Theorem~\ref{theorem-II}, and it is
used in \S\ref{sec:RGamma-Wc} to define Weil-\'{e}tale complexes $R\Gamma_\Wc
  (X, \ZZ(n))$. To do this, we deduce from Theorem~\ref{theorem-II} that
$u_\infty^* \circ \alpha_{X,n} = 0$, which implies that there exists a morphism
in the derived category $$i_\infty^*\colon R\Gamma_\fg (X, \ZZ (n)) \to
  R\Gamma_c (G_\RR, X (\CC), \ZZ(n)).$$ We choose a mapping fiber of $i_\infty^*$
and call it $R\Gamma_\Wc (X, \ZZ (n))$, which turns out to be a perfect
complex. The definition of $R\Gamma_\Wc (X,\ZZ(n))$ fits in the following
commutative diagram with distinguished triangles in the derived category $\DZ$:
\begin{equation*}
  \begin{tikzcd}[column sep=1em,font=\small]
    &[-3em] \RHom (R\Gamma (X_\et, \ZZ^c (n)), \QQ [-2]) \ar{d}{\alpha_{X,n}}[swap]{\text{Dfn.~\ref{def:RGamma-fg}}} \ar{r} &[-2.5em] 0 \ar{d} \\
    & R\Gamma_c (X_\et, \ZZ(n)) \ar{d}\ar{r}{u_\infty^*}[swap]{\text{Dfn.~\ref{dfn:u-infty}}} & R\Gamma_c (G_\RR, X (\CC), \ZZ(n))\ar{d}{id} \\
    R\Gamma_\Wc (X, \ZZ (n)) \ar{r} & R\Gamma_\fg (X, \ZZ(n)) \ar[dashed]{r}{i_\infty^*}\ar{d} & R\Gamma_c (G_\RR, X (\CC), \ZZ(n)) \ar{r} \ar{d} & R\Gamma_\Wc (X, \ZZ (n)) [1] \\
    & \RHom (R\Gamma (X_\et, \ZZ^c (n)), \QQ [-1]) \ar{r} & 0
  \end{tikzcd}
\end{equation*}

The resulting complex is the same as defined in \cite{Flach-Morin-2018} if $X$
is proper and regular.

In \S\ref{sec:known-cases-of-Lc-Xet-n} we consider the cases of $X$ for which
Conjecture $\mathbf{L}^c (X_\et, n)$ is known, and hence our results hold
unconditionally, and in \S\ref{sec:comparison-with-FM} we verify that if $X$ is
proper and regular, our complex $R\Gamma_\Wc (X, \ZZ (n))$ is isomorphic to
that constructed in \cite{Flach-Morin-2018} by Flach and Morin.

There are two appendices to this paper: Appendix~\ref{app:homological-algebra}
collects some lemmas from homological algebra, and
Appendix~\ref{app:modified-cohomology-with-compact-support} gives an overview
of the definitions of \'{e}tale cohomology with compact support $R\Gamma_c
  (X_\et, -)$ and its modified version $R\widehat{\Gamma}_c (X_\et, -)$.

This work is inspired by \cite{Flach-Morin-2018}. Here is a brief comparison
between the notation and assumptions.

\begin{center}
  \renewcommand{\arraystretch}{1.5}
  \begin{tabular}{cc}
    \hline
    \textbf{this paper}                                                                                           & \textbf{Flach--Morin}                                                                                                       \\
    \hline
    {\renewcommand{\arraystretch}{1}\begin{tabular}{c} $X\to\Spec\ZZ$ \\ separated, of finite type \end{tabular}} & {\renewcommand{\arraystretch}{1}\begin{tabular}{c} $X\to\Spec\ZZ$ \\ proper, regular, equidimensional\end{tabular}}         \\
    \hline
    $n < 0$                                                                                                       & $n \in \ZZ$                                                                                                                 \\
    \hline
    {\renewcommand{\arraystretch}{1}\begin{tabular}{c} cycle complexes \\ $\ZZ^c (n)$ \end{tabular}}              & {\renewcommand{\arraystretch}{1}\begin{tabular}{c} cycle complexes \\ $\ZZ (d-n)[2d]$, $d = \dim X$ \end{tabular}} \\
    \hline
    $R\Gamma_\fg (X, \ZZ(n))$                                                                                     & {\renewcommand{\arraystretch}{1}\begin{tabular}{c} $R\Gamma_W (\overline{X}, \ZZ(n))$, \\ up to finite $2$-torsion \end{tabular}}    \\
    \hline
    $R\Gamma_\Wc (X,\ZZ(n))$                                                                                      & $R\Gamma_\Wc (X, \ZZ(n))$                                                                                                   \\
    \hline
  \end{tabular}
\end{center}

{\small
\subsection*{Acknowledgments}

This text is based on the results of my PhD thesis, carried out at the
Universit\'{e} de Bordeaux and Universiteit Leiden under the supervision of
Baptiste Morin and Bas Edixhoven. I am very grateful to them for their support.
I thank Stephen Lichtenbaum and Niranjan Ramachandran who kindly agreed to act
as reviewers for my thesis and provided me with many useful comments and
suggestions. I am also indebted to Matthias Flach, since the ideas of this
paper come from \cite{Flach-Morin-2018}. Moreover, the work of Thomas Geisser
on arithmetic duality \cite{Geisser-2010} is also crucial for this paper, and
his work on Weil-\'{e}tale cohomology for varieties over finite fields
\cite{Geisser-2004,Geisser-2006,Geisser-2010-arithmetic-homology} has been of
great influence for me. I thank Maxim Mornev for many fruitful mathematical
conversations. This paper was edited during my stay at the Center for Research
in Mathematics (CIMAT), Guanajuato, Mexico. I~am grateful personally to Pedro
Luis del \'{A}ngel and Xavier G\'{o}mez Mont for their hospitality. Finally, I
am indebted to the anonymous referee whose sharp and insightful comments on an
earlier draft helped to improve the exposition.}


\section{Proof of Theorem~I}
\label{sec:arithmetic-duality-theorem}

At the heart of our constructions is an arithmetic duality theorem for cycle
complexes established by Thomas Geisser in \cite{Geisser-2010}. The purpose of
this section is to deduce Theorem~\ref{theorem-I} from Geisser's duality. We
would like to obtain a quasi-isomorphism of complexes
\[ R\widehat{\Gamma}_c (X_\et, \ZZ (n)) \xrightarrow{\cong}
  \RHom (R\Gamma (X_\et, \ZZ^c (n)), \QQ/\ZZ [-2]). \]

Here $R\widehat{\Gamma}_c (X_\et, \ZZ (n))$ denotes the modified \'{e}tale
cohomology with compact support, described in
Appendix~\ref{app:modified-cohomology-with-compact-support}. We note that
\cite{Geisser-2010} uses the notation ``$R\Gamma_c$'' for our
``$R\widehat{\Gamma}_c$'', but we take special care to distinguish the two
things, since we also need the usual \'{e}tale cohomology with compact support
$R\Gamma_c (X_\et, \ZZ (n))$.

We split our proof of Theorem~\ref{theorem-I} into two propositions.

\begin{proposition}
  For any $n < 0$ we have a quasi-isomorphism of complexes
  \begin{equation}
    \label{eqn:duality-quasi-isomorphism-1}
    R\widehat{\Gamma}_c (X_\et, \ZZ (n)) \cong
    \varinjlim_m \RHom (R\Gamma (X_\et, \ZZ/m\ZZ^c (n)), \QQ/\ZZ [-2]).
  \end{equation}

  \begin{proof}
    We unwind our definition of $\ZZ (n)$ for $n < 0$ and reduce everything to
    the results from \cite{Geisser-2010}. Since
    $\ZZ (n) \dfn \bigoplus_p \varinjlim_r j_{p!} \mu_{p^r}^{\otimes n} [-1]$,
    and \'{e}tale cohomology commutes with filtered colimits of coefficients,
    it suffices to show that for every prime $p$ and $r\ge 1$ there is a
    quasi-isomorphism of complexes
    \begin{equation}
      \label{eqn:duality-quasi-isomorphism-1-pr}
      R\widehat{\Gamma}_c (X_\et, j_{p!} \mu_{p^r}^{\otimes n} [-1]) \cong
      \RHom (R\Gamma (X_\et, \ZZ^c/p^r (n)), \QQ/\ZZ [-2]).
    \end{equation}

    As in Definition~\ref{dfn:sheaf-Z(n)}, here $j_p$ denotes the canonical open
    immersion $j_p\colon X[1/p] \hookrightarrow X$. We further denote by $f$ the
    structure morphism $X\to \Spec \ZZ$ and by $f_p$ the structure morphism $X
        [1/p] \to \Spec \ZZ [1/p]$:

    \[ \begin{tikzcd}
        X [1/p]\ar[hookrightarrow]{r}{j_p}\ar{d}[swap]{f_p} & X\ar{d}{f} \\
        \Spec \ZZ [1/p]\ar[hookrightarrow]{r} & \Spec \ZZ
      \end{tikzcd} \]

    As we are going to change the base scheme, let us write $\Hom_X (-,-)$ for the
    $\Hom$ between sheaves on $X_\et$ and $\iHom_X (-,-)$ for the internal $\Hom$.
    Instead of $\Hom_{\Spec R}$, we will simply write $\Hom_R$.

    Applying various results from \cite{Geisser-2004-Dedekind} and
    \cite{Geisser-2010}, we obtain a quasi-isomorphism of complexes of sheaves
    \[ R\iHom_X (j_{p!} \mu_{p^r}^{\otimes n} [-1], \ZZ^c_X (0)) \cong \hspace{4cm} \]
    \begin{align*}
       & \cong R j_{p*} R\iHom_{X [1/p]} (\mu_{p^r}^{\otimes n} [-1], \ZZ^c_{X [1/p]} (0))                                       & \text{\cite[Prop. 7.10~c)]{Geisser-2010}}          \\
       & \cong R j_{p*} R\iHom_{X[1/p]} (f_p^* \mu_{p^r}^{\otimes n} [-1], \ZZ^c_{X [1/p]} (0))                                                                                       \\
       & \cong R j_{p*} R f^!_p R\iHom_{\ZZ [1/p]} (\mu_{p^r}^{\otimes n} [-1], \ZZ^c_{\ZZ [1/p]} (0))                           & \text{\cite[Prop. 7.10~c)]{Geisser-2010}}          \\
       & \cong R j_{p*} R f^!_p R\iHom_{\ZZ [1/p]} (\mu_{p^r}^{\otimes n} [-1], \mathbb{G}_\mathrm{m} [1])                       & \text{\cite[Lemma~7.4]{Geisser-2010}}              \\
       & \cong R j_{p*} R f^!_p R\iHom_{\ZZ [1/p]} (\mu_{p^r}^{\otimes n}, \mathbb{G}_\mathrm{m}) [2]                                                                                 \\
       & \cong R j_{p*} R f^!_p \, \mu_{p^r}^{\otimes (1-n)} [2]                                                                                                                      \\
       & \cong R j_{p*} R f^!_p \, \Bigl(\ZZ_{\ZZ [1/p]}/p^r (1-n)\Bigr) [2] \cong R j_{p*} R f^!_p \, \ZZ^c_{\ZZ [1/p]}/p^r (n) & \text{\cite[Thm.~1.2]{Geisser-2004-Dedekind}}      \\
       & \cong R j_{p*} \ZZ^c_{X [1/p]} / p^r (n)                                                                                & \text{\cite[Prop.~7.10~a)]{Geisser-2010}}          \\
       & \cong R j_{p*} j_p^*\ZZ^c_X/p^r (n) \cong \ZZ^c_X/p^r (n)                                                               & \text{\cite[Thm.~7.2~a), Prop.~2.3]{Geisser-2010}}
    \end{align*}

    After applying $R\Gamma (X_\et, -)$, we get a quasi-isomorphism of complexes of
    abelian groups
    \[
      \RHom (j_{p!} \mu_{p^r}^{\otimes n} [-1], \ZZ^c_X (0)) \cong
      R\Gamma (X_\et, \ZZ^c_X/p^r (n)).
    \]

    Now according to the duality \cite[Theorem~7.8]{Geisser-2010},
    \[
      \RHom (j_{p !} \mu_{p^r}^{\otimes n} [-1], \ZZ^c (0)) \cong
      \RHom (R\widehat{\Gamma}_c (X_\et, j_{p !} \mu_{p^r}^{\otimes n} [-1]), \QQ/\ZZ [-2]).
    \]

    What we end up with is a quasi-isomorphism
    \[ R\Gamma (X_\et, \ZZ^c/p^r (n)) \cong \RHom (R\widehat{\Gamma}_c (X_\et,
      j_{p !} \mu_{p^r}^{\otimes n} [-1]), \QQ/\ZZ [-2]). \]
    The groups $\widehat{H}^i_c (X_\et, j_{p!} \mu_{p^r}^{\otimes n} [-1])$ are
    finite (the sheaves $j_{p!} \mu_{p^r}^{\otimes n}$ are constructible), so
    applying $\RHom (-,\QQ/\ZZ [-2])$ yields
    \eqref{eqn:duality-quasi-isomorphism-1-pr}.
  \end{proof}
\end{proposition}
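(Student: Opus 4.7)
The plan is to reduce the statement to Geisser's arithmetic duality theorem in \cite{Geisser-2010}, which provides the core duality for constructible sheaves on arithmetic schemes against the cycle complex $\ZZ^c(0)$. The hypothesis $n<0$ makes $\ZZ(n)$ purely torsion, so the whole assertion should dévisser to a statement about finite coefficients and then be reassembled by a filtered colimit.

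First, I would recall the definition
\[ \ZZ(n) = \bigoplus_p \varinjlim_r j_{p!}\,\mu_{p^r}^{\otimes n}[-1]. \]
Since $R\widehat{\Gamma}_c(X_\et,-)$ and $R\Gamma(X_\et,\ZZ^c(n))$ both commute with filtered colimits and finite direct sums (the latter because $\ZZ^c/m(n)$ fits into a Bockstein triangle with $\ZZ^c(n)$), it suffices to produce, for each prime $p$ and each $r\ge 1$, a quasi-isomorphism
\[ R\widehat{\Gamma}_c(X_\et, j_{p!}\mu_{p^r}^{\otimes n}[-1]) \;\cong\; \RHom(R\Gamma(X_\et,\ZZ^c/p^r(n)),\QQ/\ZZ[-2]), \]
and then pass to the colimit in $r$ (and direct sum in $p$) to get \eqref{eqn:duality-quasi-isomorphism-1}.

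The main technical step is the identification of the ``dual object.'' Using the commutative square of $j_p\colon X[1/p]\hookrightarrow X$ and the structure maps $f,f_p$, I would compute $R\iHom_X(j_{p!}\mu_{p^r}^{\otimes n}[-1],\ZZ^c_X(0))$ as a sheaf on $X_\et$, via the chain of isomorphisms: (i) the $(j_{p!},Rj_{p*})$ adjunction, pushing the problem onto $X[1/p]$ where $\mu_{p^r}$ makes sense; (ii) the $Rf_p^!$ purity-type isomorphism for $\ZZ^c$ from \cite[Prop.~7.10]{Geisser-2010}, to bring the computation down to $\Spec\ZZ[1/p]$; (iii) the identification $\ZZ^c_{\ZZ[1/p]}(0)\cong\mathbb{G}_\mathrm{m}[1]$ from \cite[Lemma~7.4]{Geisser-2010}, reducing the internal Hom to $R\iHom(\mu_{p^r}^{\otimes n},\mathbb{G}_\mathrm{m})\cong\mu_{p^r}^{\otimes(1-n)}$; (iv) the Geisser comparison \cite[Thm.~1.2]{Geisser-2004} identifying $\mu_{p^r}^{\otimes(1-n)}$ with $\ZZ/p^r(1-n)$, which via \eqref{eqn:Zc(n)-vs-Z(d-n)} is $\ZZ^c_{\ZZ[1/p]}/p^r(n)$; (v) reapplying $Rj_{p*}Rf_p^!$ and the base change/pullback formulas to descend this back to $\ZZ^c_X/p^r(n)$, using that $\ZZ^c/p^r(n)$ is insensitive to the $j_p$ localization for $n<0$ thanks to \cite[Thm.~7.2~(a), Prop.~2.3]{Geisser-2010}. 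After applying $R\Gamma(X_\et,-)$ one obtains $\RHom(j_{p!}\mu_{p^r}^{\otimes n}[-1],\ZZ^c_X(0))\cong R\Gamma(X_\et,\ZZ^c_X/p^r(n))$.

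With that in hand, the finish is mechanical: invoke \cite[Thm.~7.8]{Geisser-2010}, which identifies $\RHom(\mathcal{F},\ZZ^c(0))$ with $\RHom(R\widehat{\Gamma}_c(X_\et,\mathcal{F}),\QQ/\ZZ[-2])$ for constructible $\mathcal{F}$, applied here to $\mathcal{F}=j_{p!}\mu_{p^r}^{\otimes n}[-1]$. Since $j_{p!}\mu_{p^r}^{\otimes n}$ is constructible, the groups $\widehat{H}^i_c(X_\et,j_{p!}\mu_{p^r}^{\otimes n}[-1])$ are finite, so Pontryagin double duality lets one flip the $\RHom$ and obtain \eqref{eqn:duality-quasi-isomorphism-1-pr}; then colimits in $r$ and sums over $p$ finish the proposition. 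The principal obstacle is verifying step (v), i.e. that the various pullback/pushforward manipulations are truly compatible and that the mod-$p^r$ cycle complexes glue across $j_p$; everything else is essentially bookkeeping built on Geisser's framework.
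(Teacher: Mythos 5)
Your proposal follows essentially the same argument as the paper: the same reduction to $j_{p!}\mu_{p^r}^{\otimes n}[-1]$, the same chain of isomorphisms computing $R\iHom_X(j_{p!}\mu_{p^r}^{\otimes n}[-1],\ZZ^c_X(0))\cong\ZZ^c_X/p^r(n)$ via the $(j_{p!},Rj_{p*})$ adjunction, Geisser's $Rf_p^!$-purity (Prop.~7.10), $\ZZ^c_{\ZZ[1/p]}(0)\cong\mathbb{G}_{\mathrm m}[1]$, the Geisser--Levine comparison, and descent via Thm.~7.2(a)/Prop.~2.3, followed by Geisser's duality Thm.~7.8 and Pontryagin double duality using finiteness of the constructible $\widehat{H}^i_c$, then colimits. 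Nothing is structurally different from the paper's proof.
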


To conclude the proof of Theorem~\ref{theorem-I}, we identify the complex on
the right-hand side of \eqref{eqn:duality-quasi-isomorphism-1}. For this, we
need Conjecture $\mathbf{L}^c (X_\et, n)$.

\begin{proposition}
  Assuming Conjecture $\mathbf{L}^c (X_\et, n)$, for $n < 0$, there is
  a quasi-isomorphism
  \[ \varinjlim_m \RHom (R\Gamma (X_\et, \ZZ/m\ZZ^c (n)), \QQ/\ZZ [-2]) \cong
    \RHom (R\Gamma (X_\et, \ZZ^c (n)), \QQ/\ZZ [-2]). \]

  \begin{proof}
    Consider short exact sequences
    \[ 0 \to H^i (X_\et, \ZZ^c (n))_m \to
      H^i (X_\et, \ZZ/m\ZZ^c (n)) \to
      {}_m H^{i+1} (X_\et, \ZZ^c (n)) \to 0 \]
    If we now take $\Hom (-,\QQ/\ZZ)$ and filtered colimits $\varinjlim_m$, we get
    \begin{multline}
      \label{eqn:short-exact-sequence-with-dirlim}
      0 \to \varinjlim_m \Hom ({}_m H^{i+1} (X_\et, \ZZ^c (n)), \QQ/\ZZ) \to \\
      \varinjlim_m \Hom (H^i (X_\et, \ZZ/m\ZZ^c (n)), \QQ/\ZZ) \to \\
      \varinjlim_m \Hom (H^i (X_\et, \ZZ^c (n))_m, \QQ/\ZZ) \to 0
    \end{multline}

    By Conjecture $\mathbf{L}^c (X_\et, n)$, the group $H^{i+1} (X_\et, \ZZ^c (n))$
    is finitely generated, and hence the first $\varinjlim_m$ in the short exact
    sequence \eqref{eqn:short-exact-sequence-with-dirlim} vanishes, and we obtain
    isomorphisms
    \[ \varinjlim_m \Hom (H^i (X_\et, \ZZ^c (n))_m, \QQ/\ZZ) \xrightarrow{\cong}
      \varinjlim_m \Hom (H^i (X_\et, \ZZ/m\ZZ^c (n)), \QQ/\ZZ). \]
    It remains to note that the left-hand side is canonically isomorphic to $\Hom
      (H^i (X_\et, \ZZ^c (n)), \QQ/\ZZ)$, again thanks to the finite generation of
    $H^i (X_\et, \ZZ^c (n))$, under Conjecture $\mathbf{L}^c (X_\et, n)$.

    To see this, observe that if $A$ is a finitely generated abelian group, there
    is a canonical isomorphism $$\varinjlim_m \Hom (A_m, \QQ/\ZZ) \cong \Hom (A,
      \QQ/\ZZ)$$ induced by $A \to A_m$, and then applying the functor $\Hom (-,
      \QQ/\ZZ)$ and $\varinjlim_m$. Since $\QQ/\ZZ$ is a torsion group, any
    homomorphism $A\to \QQ/\ZZ$ is killed by some $m$, hence factors through $A_m$.
  \end{proof}
\end{proposition}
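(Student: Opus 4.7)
The plan is to reduce the quasi-isomorphism to a family of isomorphisms on cohomology groups, using the fact that $\QQ/\ZZ$ is divisible and therefore injective as an abelian group. This makes $\Hom(-,\QQ/\ZZ)$ exact, so for any complex $C$ one has $H^i \RHom(C, \QQ/\ZZ[-2]) = \Hom(H^{2-i}(C), \QQ/\ZZ)$. Combining this with the exactness of filtered colimits of abelian groups and their commutation with cohomology, the problem reduces to checking that, for each $j$, the natural map
\[ \varinjlim_m \Hom(H^j(X_\et, \ZZ/m\ZZ^c(n)), \QQ/\ZZ) \;\longrightarrow\; \Hom(H^j(X_\et, \ZZ^c(n)), \QQ/\ZZ) \]
(induced by the projections $\ZZ^c(n) \to \ZZ/m\ZZ^c(n)$) is an isomorphism.

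To access the left-hand side, I would apply $R\Gamma(X_\et,-)$ to the distinguished triangle $\ZZ^c(n) \xrightarrow{\times m} \ZZ^c(n) \to \ZZ/m\ZZ^c(n) \xrightarrow{+1}$ and break the resulting long exact sequence into short exact sequences
\[ 0 \to H^j(X_\et, \ZZ^c(n))_m \to H^j(X_\et, \ZZ/m\ZZ^c(n)) \to {}_m H^{j+1}(X_\et, \ZZ^c(n)) \to 0. \]
Applying the exact contravariant functor $\Hom(-,\QQ/\ZZ)$ and passing to the filtered colimit over $m$ (also exact) yields a short exact sequence whose middle term is the left-hand side above. It then remains to compute the two flanking colimits using Conjecture $\mathbf{L}^c(X_\et, n)$.

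The cokernel term is the easier one: since $H^j(X_\et, \ZZ^c(n))$ is finitely generated, the group $\Hom(H^j(X_\et, \ZZ^c(n)), \QQ/\ZZ)$ is itself torsion (any homomorphism is determined by the images of finitely many generators, each of finite order in $\QQ/\ZZ$), and $\Hom(H^j(X_\et, \ZZ^c(n))_m, \QQ/\ZZ)$ is canonically its $m$-torsion subgroup; taking $\varinjlim_m$ therefore recovers the whole dual group. The kernel term is where the main obstacle lies and where the conjecture is genuinely used: finite generation of $H^{j+1}(X_\et, \ZZ^c(n))$ makes its torsion part finite of some exponent $N$, so ${}_m H^{j+1}(X_\et, \ZZ^c(n))$ stabilizes for $N \mid m$; the transition maps originate from $\times(m'/m)$ on $\ZZ/m\ZZ$, which acts as zero on the finite torsion as soon as $m'/m$ is a multiple of $N$, so the colimit $\varinjlim_m \Hom({}_m H^{j+1}(X_\et, \ZZ^c(n)), \QQ/\ZZ)$ vanishes and the sequence collapses to the desired isomorphism. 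Without the finite generation hypothesis this vanishing could genuinely fail, since the torsion of $H^{j+1}(X_\et, \ZZ^c(n))$ could accumulate unboundedly; the remaining bookkeeping---tracking the direction of the transition maps coming from $\QQ/\ZZ(n) = \bigoplus_p \varinjlim_r j_{p!}\mu_{p^r}^{\otimes n}$, and verifying that the degreewise isomorphisms are induced by a single morphism in $\DZ$---is routine.
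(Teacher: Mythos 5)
Your proposal is correct and takes essentially the same route as the paper: split the long exact sequence of the mod-$m$ triangle into short exact sequences, dualize against the injective group $\QQ/\ZZ$, pass to the filtered colimit, and use Conjecture $\mathbf{L}^c (X_\et, n)$ both to kill $\varinjlim_m \Hom ({}_m H^{i+1} (X_\et, \ZZ^c (n)), \QQ/\ZZ)$ and to identify $\varinjlim_m \Hom (H^i (X_\et, \ZZ^c (n))_m, \QQ/\ZZ)$ with $\Hom (H^i (X_\et, \ZZ^c (n)), \QQ/\ZZ)$. Your bounded-exponent analysis of the transition maps for the vanishing of the kernel term is in fact slightly more explicit than the paper's own wording.
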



\section{$G_\RR$-equivariant cohomology of $X (\CC)$}
\label{sec:GR-equivariant-cohomology}

\begin{lemma}
  Let $A^\bullet$ be a perfect complex of $\ZZ G_\RR$-modules.

  \begin{enumerate}
    \item[1)] The complex $A^\bullet \otimes^\mathbf{L} \QQ/\ZZ$ is of cofinite
          type.

    \item[2)]
          $R\Gamma (G_\RR, A^\bullet \otimes \QQ) \cong (A^\bullet \otimes
            \QQ)^{G_\RR}$ is a perfect complex of $\QQ$-vector spaces, and the complex
          $R\widehat{\Gamma} (G_\RR, A^\bullet \otimes \QQ)$ is quasi-isomorphic to
          $0$.

    \item[3)]
          $R\widehat{\Gamma} (G_\RR, A^\bullet \otimes^\mathbf{L} \QQ/\ZZ) \cong
            R\widehat{\Gamma} (G_\RR, A^\bullet [+1])$, and these complexes have finite
          $2$-torsion cohomology.

    \item[4)] $R\Gamma (G_\RR, A^\bullet)$ is almost perfect, and
          $R\Gamma (G_\RR, A^\bullet \otimes^\mathbf{L} \QQ/\ZZ)$ is almost of
          cofinite type.
  \end{enumerate}

  \begin{proof}
    The universal coefficient theorem gives us short exact sequences
    $$0 \to H^i (A^\bullet)_m \to H^i (A^\bullet \otimes^\mathbf{L} \ZZ/m\ZZ) \to {}_m H^{i+1} (A^\bullet) \to 0$$
    The colimit of these over $m$ is
    $$0 \to H^i (A^\bullet) \otimes \QQ/\ZZ \to H^i (A^\bullet \otimes^\mathbf{L} \QQ/\ZZ) \to H^{i+1} (A^\bullet)_\tor \to 0$$
    Here $H^i (A^\bullet) \otimes \QQ/\ZZ$ is injective, hence the short exact
    sequence splits. We see that $H^i (A^\bullet \otimes^\mathbf{L} \QQ/\ZZ)$ is
    of cofinite type and vanishes for $|i| \gg 0$, i.e. that
    $A^\bullet \otimes^\mathbf{L} \QQ/\ZZ$ is of cofinite type.

    Let us now consider the spectral sequences
    \begin{align}
      \label{eqn:homological-lemma-ss-1} E_2^{pq} & = H^p (G_\RR, H^q (A^\bullet \otimes \QQ)) \Longrightarrow H^{p+q} (G_\RR, A^\bullet \otimes \QQ),                     \\
      \label{eqn:homological-lemma-ss-2} E_2^{pq} & = \widehat{H}^p (G_\RR, H^q (A^\bullet \otimes \QQ)) \Longrightarrow \widehat{H}^{p+q} (G_\RR, A^\bullet \otimes \QQ).
    \end{align}
    We recall that $H^p (G_\RR, -)$ are $2$-torsion groups for $p > 0$. Since
    $H^q (A^\bullet \otimes \QQ)$ are $\QQ$-vector spaces, it follows that
    $E_2^{pq} = 0$ for $p > 0$ in \eqref{eqn:homological-lemma-ss-1}, and the
    spectral sequence degenerates. Similarly, the Tate cohomology groups
    $\widehat{H}^p (G_\RR, H^q (A^\bullet \otimes \QQ))$ are trivial for
    \emph{all} $p$ for the same reasons, so that
    \eqref{eqn:homological-lemma-ss-2} is trivial. This proves part 2).

    Part 3) now follows from the distinguished triangle
    \[ R\widehat{\Gamma} (G_\RR, A^\bullet) \to
      R\widehat{\Gamma} (G_\RR, A^\bullet \otimes \QQ) \to
      R\widehat{\Gamma} (G_\RR, A^\bullet \otimes^\mathbf{L} \QQ/\ZZ) \to
      R\widehat{\Gamma} (G_\RR, A^\bullet) [1]. \]

    Next, examining the spectral sequence $$E_2^{pq} = H^p (G_\RR, H^q (A^\bullet))
      \Longrightarrow H^{p+q} (G_\RR, A^\bullet),$$ we see that the groups $H^i
      (G_\RR, A^\bullet)$ are finitely generated, zero for $i \ll 0$, and torsion for
    $i \gg 0$. The latter is $2$-torsion. To see that, let $P_\bullet
      \twoheadrightarrow \ZZ$ be the bar-resolution of $\ZZ$ by free $\ZZ
      G_\RR$-modules. Consider the morphism of complexes
    \[ \begin{tikzcd}
        \cdots\ar{r} & P_3\ar{r}\ar{d}{2} & P_2\ar{r}\ar{d}{2} & P_1\ar{r}\ar{d}{2} & P_0\ar{r}\ar{d}{2-N} & 0 \\
        \cdots\ar{r} & P_3\ar{r} & P_2\ar{r} & P_1\ar{r} & P_0\ar{r} & 0
      \end{tikzcd} \]
    where $N$ denotes the norm map. The proof of \cite[Theorem~6.5.8]{Weibel-1994}
    shows that the above morphism induces multiplication by $2$ on $H^i (G_\RR,-)$
    for $i > 0$, and it is null-homotopic. Since $A^\bullet$ is bounded, we see
    that the above morphism induces multiplication by $2$ on $H^i (G_\RR,
      A^\bullet)$ for $i \gg 0$.

    Similarly, analyzing $$E_2^{pq} = H^p (G_\RR, H^q (A^\bullet \otimes^\mathbf{L}
      \QQ/\ZZ)) \Longrightarrow H^{p+q} (G_\RR, A^\bullet \otimes^\mathbf{L}
      \QQ/\ZZ).$$ we see that $H^i (G_\RR, A^\bullet \otimes^\mathbf{L} \QQ/\ZZ)$ are
    groups of cofinite type. To see that these are finite $2$-torsion for $i \gg
      0$, consider the triangle
    \[ R\Gamma (G_\RR, A^\bullet) \to
      R\Gamma (G_\RR, A^\bullet \otimes \QQ) \to
      R\Gamma (G_\RR, A^\bullet \otimes^\mathbf{L} \QQ/\ZZ) \to
      R\Gamma (G_\RR, A^\bullet) [1] \]
    Here $R\Gamma (G_\RR, A^\bullet \otimes \QQ)$ is bounded, and therefore $H^i
      (G_\RR, A^\bullet \otimes^\mathbf{L} \QQ/\ZZ) \cong H^{i+1} (G_\RR, A^\bullet)$
    for $i \gg 0$.
  \end{proof}
\end{lemma}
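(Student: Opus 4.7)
The plan is to prove the four parts in order, with parts 3 and 4 leaning on the computation in part 2.

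For part 1, I would start from the universal coefficient short exact sequence
\[ 0 \to H^i(A^\bullet)_m \to H^i(A^\bullet \otimes^\mathbf{L} \ZZ/m\ZZ) \to {}_m H^{i+1}(A^\bullet) \to 0, \]
and then pass to the filtered colimit over $m$. Exactness of $\varinjlim$ replaces $\ZZ/m\ZZ$ by $\QQ/\ZZ$, with outer terms $H^i(A^\bullet) \otimes \QQ/\ZZ$ and $H^{i+1}(A^\bullet)_\tor$, both of cofinite type since $A^\bullet$ is perfect. The leftmost term is divisible hence injective in $\mathbf{Ab}$, so the sequence splits, the middle is an extension of two cofinite type groups, and boundedness is inherited from $A^\bullet$.

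Part 2 rests on the observation that $|G_\RR| = 2$ annihilates $H^p(G_\RR, -)$ for $p > 0$ and annihilates every $\widehat{H}^p(G_\RR, -)$. Since a $\QQ$-vector space is uniquely divisible by $2$, in the Hochschild--Serre spectral sequence $E_2^{pq} = H^p(G_\RR, H^q(A^\bullet \otimes \QQ))$ every entry with $p > 0$ vanishes, so it degenerates to the invariants $(H^q(A^\bullet) \otimes \QQ)^{G_\RR}$, which are finite-dimensional $\QQ$-vector spaces nonzero only in finitely many degrees. For Tate cohomology even the $p = 0$ column vanishes, giving $R\widehat{\Gamma}(G_\RR, A^\bullet \otimes \QQ) \simeq 0$. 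Part 3 then follows directly by applying $R\widehat{\Gamma}(G_\RR,-)$ to the distinguished triangle $A^\bullet \to A^\bullet \otimes \QQ \to A^\bullet \otimes^\mathbf{L} \QQ/\ZZ \to A^\bullet[1]$: the middle term vanishes, producing the shift isomorphism; both sides are $2$-torsion since all Tate cohomology of $G_\RR$ is, and are finite because the respective $E_2$-terms are bounded and finitely generated.

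For part 4, the engine is again the Hochschild--Serre spectral sequence $E_2^{pq} = H^p(G_\RR, H^q(A^\bullet)) \Rightarrow H^{p+q}(G_\RR, A^\bullet)$. Its $E_2$-entries are finitely generated, nonzero only in a bounded range of $q$, and torsion for $p > 0$; this yields finite generation of the abutment, vanishing in low degrees, and the statement that each graded piece of the filtration on $H^i(G_\RR, A^\bullet)$ is $2$-torsion once $i$ exceeds the range of $A^\bullet$. The step I expect to require the most care is upgrading \emph{filtered by $2$-torsion pieces} to \emph{itself $2$-torsion}. My plan is to construct a chain map on the bar resolution $P_\bullet \twoheadrightarrow \ZZ$ that induces multiplication by $2$ on $H^p(G_\RR,-)$ for $p > 0$ and is null-homotopic (the classical argument is \cite[Theorem~6.5.8]{Weibel-1994}), then transfer it to $\RHom(P_\bullet, A^\bullet)$ and use boundedness of $A^\bullet$ to ensure that in high enough degrees only positive-$p$ contributions survive, so that null-homotopy in the resolution forces $2 \cdot \mathrm{id} = 0$ on $H^i(G_\RR, A^\bullet)$ itself. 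The companion statement for $R\Gamma(G_\RR, A^\bullet \otimes^\mathbf{L} \QQ/\ZZ)$ is then obtained from the triangle $R\Gamma(G_\RR, A^\bullet) \to R\Gamma(G_\RR, A^\bullet \otimes \QQ) \to R\Gamma(G_\RR, A^\bullet \otimes^\mathbf{L} \QQ/\ZZ)$: the middle term is bounded (indeed perfect) by part 2, so for $i \gg 0$ one has $H^i(G_\RR, A^\bullet \otimes^\mathbf{L} \QQ/\ZZ) \cong H^{i+1}(G_\RR, A^\bullet)$, and the finite $2$-torsion conclusion transfers across via part 1 (which ensures cofinite type in bounded degrees).
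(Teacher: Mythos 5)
Your proposal is correct and follows essentially the same route as the paper: universal coefficients plus a colimit for part 1, the (Tate) hypercohomology spectral sequences with $\QQ$-coefficients for part 2, the triangle $A^\bullet \to A^\bullet\otimes\QQ \to A^\bullet\otimes^{\mathbf{L}}\QQ/\ZZ$ for part 3, and for part 4 the same bar-resolution chain map (multiplication by $2$, corrected by the norm in degree $0$) that is null-homotopic yet induces multiplication by $2$ in high degrees, together with the boundedness of the rational part to transfer the finite $2$-torsion statement to the $\QQ/\ZZ$-coefficient complex. In particular, you correctly isolated the one genuinely delicate point, namely that the spectral sequence alone only gives a filtration with $2$-torsion graded pieces, which is exactly the issue the paper's null-homotopy argument resolves.
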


\begin{proposition}
  \label{prop:equivariant-coho-of-X(C)}
  Let $X$ be an arithmetic scheme and $n \in \ZZ$.
  Then $X (\CC)$ has the following types of complexes as its cohomology:

  \begin{center}
    \renewcommand{\arraystretch}{1.5}
    \begin{tabular}{|c|c|c|c|}
      \hline
                                                    & $A=\ZZ$                                                                                    & $A=\QQ$            & $A=\QQ/\ZZ$                                                                                  \\
      \hline
      $R\Gamma_c (X (\CC), A(n))$                   & perfect${}_{/\ZZ}$                                                                         & perfect${}_{/\QQ}$ & cofinite type                                                                                \\
      \hline
      $R\Gamma_c (G_\RR, X (\CC), A (n))$           & {\renewcommand{\arraystretch}{0.75}\begin{tabular}{c} almost \\ perfect \end{tabular}}     & perfect${}_{/\QQ}$ & {\renewcommand{\arraystretch}{0.75}\begin{tabular}{c} almost \\ cofinite type \end{tabular}} \\
      \hline
      $R\widehat{\Gamma}_c (G_\RR, X (\CC), A (n))$ & {\renewcommand{\arraystretch}{0.75}\begin{tabular}{c} finite \\ $2$-torsion \end{tabular}} & $\cong 0$          & {\renewcommand{\arraystretch}{0.75}\begin{tabular}{c} finite \\ $2$-torsion \end{tabular}}   \\
      \hline
    \end{tabular}
  \end{center}

  Moreover, there is an isomorphism
  \begin{equation}
    \label{eqn:Tate-vs-normal-cohomology-of-X(C)}
    \widehat{H}^i_c (G_\RR, X (\CC), \ZZ(n)) \cong
    H^i_c (G_\RR, X (\CC), \ZZ(n))
    \quad\text{for }i \ge 2 \dim X - 1.
  \end{equation}

  \begin{proof}
    We claim that $H^q_c (X (\CC), \ZZ(n))$ are finitely generated groups, and
    \begin{equation}
      \label{eqn:vanishing-of-cohomology-with-compact-support-of-X(C)}
      H^q_c (X (\CC), \ZZ(n)) = 0\quad\text{for }q \notin [0, 2 \dim X - 2].
    \end{equation}

    We may assume $X (\CC) \ne \emptyset$. The topological dimension of $X (\CC)$
    satisfies $\dim X = 1 + \dim X_\CC = 1 + \frac{1}{2} \dim_{top} X (\CC)$, so
    that $\dim_{top} X (\CC) = 2 \dim X - 2$.

    If $X (\CC)$ is smooth, we may assume it is of pure dimension $d = \dim_{top} X
      (\CC)$. Then finite generation and
    \eqref{eqn:vanishing-of-cohomology-with-compact-support-of-X(C)} follow from
    the Poincar\'{e} duality
    \[ H^i_c (X (\CC), \ZZ(n)) \cong H_{2d - i} (X (\CC), \ZZ(n)), \]
    and the fact that $X (\CC)$ has the homotopy type of a finite CW-complex by van
    der Waerden's theorem (see \cite{van-der-Waerden-30} and more recent
    expositions with more general results in \cite{Lojasiewicz-1964,
      Hironaka-1974}).

    In the general case, we use induction on the dimension of $X (\CC)$. Consider
    the decomposition $U (\CC) \hookrightarrow X (\CC) \hookleftarrow Z (\CC)$,
    where $Z (\CC)$ is the singular locus. In the long exact sequence
    \[ \cdots \to H^q_c (U (\CC), \ZZ(n)) \to H^q_c (X (\CC), \ZZ(n)) \to H^q_c (Z (\CC), \ZZ(n)) \to H^{q+1}_c (U (\CC), \ZZ(n)) \to \cdots \]
    the groups $H^q_c (U (\CC), \ZZ(n))$ are finitely generated by the smooth case,
    and $H^q_c (Z (\CC), \ZZ(n))$ are finitely generated by induction hypothesis.
    It follows that $H^q_c (X (\CC), \ZZ(n))$ are finitely generated. Similarly we
    conclude by induction that
    \eqref{eqn:vanishing-of-cohomology-with-compact-support-of-X(C)} holds.

    The rest of the table is an application of the previous lemma to $R\Gamma_c
      (X(\CC), \ZZ(n))$.

    Finally, \eqref{eqn:Tate-vs-normal-cohomology-of-X(C)} follows from the
    spectral sequences
    \begin{align*}
      \widehat{E}^{pq}_2 = \widehat{H}^p (G_\RR, H^q_c (X (\CC), \ZZ(n))) & \Longrightarrow
      \widehat{H}^i_c (G_\RR, X (\CC), \ZZ(n)),                                             \\
      E^{pq}_2 = H^p (G_\RR, H^q_c (X (\CC), \ZZ(n)))                     & \Longrightarrow
      H^i_c (G_\RR, X (\CC), \ZZ(n)),
    \end{align*}
    using \eqref{eqn:vanishing-of-cohomology-with-compact-support-of-X(C)}
    and the isomorphism
    $\widehat{H}^p (G_\RR, -) \cong H^p (G_\RR, -)$ for $p \ge 1$.
  \end{proof}
\end{proposition}


\section{Some consequences of Theorem~I}
\label{sec:consequences-of-theorem-I}

Now we deduce some consequences from the duality Theorem~\ref{theorem-I}.

\begin{lemma}
  \label{lemma:morphism-hat-Hc(Xet,Z(n))->Hc(Xet,Z(n))}
  The canonical morphism
  $\phi^i\colon \widehat{H}^i_c (X_\et, \ZZ (n)) \to H^i_c (X_\et, \ZZ (n))$
  sits in a long exact sequence
  \begin{multline*}
    \cdots \to \widehat{H}^{i-1}_c (G_\RR, X (\CC), \ZZ (n)) \to
    \widehat{H}_c^i (X_\et, \ZZ(n)) \xrightarrow{\phi^i}
    H_c^i (X_\et, \ZZ(n)) \\
    \to \widehat{H}^i_c (G_\RR, X (\CC), \ZZ (n)) \to \cdots
  \end{multline*}
  where the groups $\widehat{H}^i_c (G_\RR, X (\CC), \ZZ (n))$ are finite
  $2$-torsion. In particular,
  \begin{enumerate}
    \item[$1)$] the kernel and cokernel of $\phi^i$ are finite $2$-torsion,

    \item[$2)$] if $X (\RR) = \emptyset$, then
          $R\widehat{\Gamma}_c (G_\RR, X (\CC), \ZZ (n)) = 0$ and
          $\widehat{H}^i_c (X_\et, \ZZ (n)) \cong H^i_c (X_\et, \ZZ (n))$.
  \end{enumerate}

  \begin{proof}
    The exact sequence follows from the definition of modified \'{e}tale cohomology
    with compact support and Artin's comparison theorem. This is proved in
    \cite[Lemma~6.14]{Flach-Morin-2018}. In particular, the argument shows that
    $R\widehat{\Gamma}_c (G_\RR, X (\CC), \ZZ (n)) \cong
      R\widehat{\Gamma} (G_\RR, v^* Rf_* \ZZ(n))$ where
    $v\colon \Spec \CC \to \Spec \ZZ$ and $f\colon X\to \Spec \ZZ$,
    and $R\widehat{\Gamma}_c (G_\RR, X (\CC), \ZZ (n)) = 0$ if
    $X (\RR) = \emptyset$.

    The fact that $\widehat{H}^i_c (G_\RR, X (\CC), \ZZ (n))$ are finite
    $2$-torsion is a part of Proposition~\ref{prop:equivariant-coho-of-X(C)}.
  \end{proof}
\end{lemma}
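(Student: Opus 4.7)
The plan is to derive everything from a single distinguished triangle which, by definition, relates modified \'{e}tale cohomology with compact support to the usual one via a ``correction term'' supported at the archimedean places. More precisely, the construction reviewed in Appendix~\ref{app:modified-cohomology-with-compact-support} produces a distinguished triangle
\[ R\widehat{\Gamma}_c (X_\et, \ZZ (n)) \to R\Gamma_c (X_\et, \ZZ (n)) \to C \to R\widehat{\Gamma}_c (X_\et, \ZZ (n)) [1] \]
in $\DZ$, where the third term $C$ is built from Tate cohomology at the real place. The key identification is $C \cong R\widehat{\Gamma}_c (G_\RR, X (\CC), \ZZ(n))$, and my first step is to quote this from \cite[Lemma~6.14]{Flach-Morin-2018}: applying Artin's comparison theorem to the geometric point $v\colon \Spec\CC\to\Spec\ZZ$ yields $C \cong R\widehat{\Gamma} (G_\RR, v^* Rf_* \ZZ(n))$, which in turn agrees with $R\widehat{\Gamma}_c (G_\RR, X (\CC), \ZZ(n))$. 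Taking the long exact sequence in cohomology of the above triangle immediately produces the stated six-term fragment with $\phi^i$ as the middle map.

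Next, I would invoke Proposition~\ref{prop:equivariant-coho-of-X(C)}, which was proved precisely to furnish information about $R\widehat{\Gamma}_c (G_\RR, X (\CC), \ZZ(n))$: its cohomology groups $\widehat{H}^i_c (G_\RR, X (\CC), \ZZ(n))$ are finite $2$-torsion. Substituting this into the long exact sequence, part~$1)$ is automatic, since the kernel of $\phi^i$ is a quotient of $\widehat{H}^{i-1}_c (G_\RR, X (\CC), \ZZ(n))$ and its cokernel injects into $\widehat{H}^i_c (G_\RR, X (\CC), \ZZ(n))$.

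For part~$2)$, the idea is that when $X (\RR) = \emptyset$ the group $G_\RR$ acts freely on $X (\CC)$, so passing to the quotient makes $R\Gamma_c (X (\CC), \ZZ(n))$ behave like a complex of cohomologically trivial $G_\RR$-modules, forcing $R\widehat{\Gamma} (G_\RR, R\Gamma_c (X (\CC), \ZZ (n))) \cong 0$; cleanly, one just observes that the isomorphism $C \cong R\widehat{\Gamma} (G_\RR, v^* Rf_*\ZZ (n))$ above shows $C = 0$ when $v^* Rf_*\ZZ(n)$ has no $G_\RR$-fixed points to speak of at $X (\RR)$, which is again covered by \cite[Lemma~6.14]{Flach-Morin-2018}. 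Once $C = 0$, the distinguished triangle collapses to the asserted quasi-isomorphism $R\widehat{\Gamma}_c (X_\et, \ZZ(n)) \cong R\Gamma_c (X_\et, \ZZ(n))$. The only nontrivial point in this whole argument is the identification of $C$ with $R\widehat{\Gamma}_c (G_\RR, X (\CC), \ZZ(n))$, and since this has already been carried out in \cite[Lemma~6.14]{Flach-Morin-2018} in essentially the same generality, the proof reduces to citing that result and assembling the pieces.
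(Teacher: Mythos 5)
Your proposal is correct and follows essentially the same route as the paper: both read off the long exact sequence from the defining triangle of $R\widehat{\Gamma}_c$ in Appendix~\ref{app:modified-cohomology-with-compact-support} together with the identification of the third term via Artin's comparison as in \cite[Lemma~6.14]{Flach-Morin-2018}, and both get the finite $2$-torsion claim from Proposition~\ref{prop:equivariant-coho-of-X(C)}. Your extra heuristic for part~2) (free $G_\RR$-action when $X(\RR)=\emptyset$ forcing Tate hypercohomology to vanish) is the right idea but is left informal; since you ultimately defer to the same citation the paper uses, this is not a gap.
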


\begin{proposition}
  \label{prop:motivic-cohomology-duality-consequences}
  Let $X$ be an arithmetic scheme of dimension $d$ satisfying Conjecture
  $\mathbf{L}^c (X_\et,n)$, let $n < 0$.

  \begin{enumerate}
    \item[$1)$] If $X (\RR) = \emptyset$, then $H^i (X_\et, \ZZ^c (n)) = 0$ for
          $i > 1$ or $i < -2d$.

    \item[$2)$] In general, $H^i (X_\et, \ZZ^c (n)) = 0$ for $i < -2d$, and
          $H^i (X_\et, \ZZ^c (n))$ is a finite $2$-torsion group for $i > 1$.

    \item[$3)$] If $X/\FF_q$ is a variety over a finite field, then the groups
          $H^i (X_\et, \ZZ^c(n))$ are finite for all $i \in \ZZ$.
  \end{enumerate}

  In general, we have the following cohomology for $n < 0$:
  \begin{center}
    \renewcommand{\arraystretch}{1.5}
    \begin{tabular}{|c|c|cl|cl|}
      \hline
      \textbf{groups}                    & \textbf{type}                                                                              & \multicolumn{2}{c|}{$i \ll 0$}                                                             & \multicolumn{2}{c|}{$i \gg 0$}                                                                                                                 \\
      \hline
      $H^i (X_\et, \ZZ^c (n))$           & {\renewcommand{\arraystretch}{0.75}\begin{tabular}{c} finitely \\ generated \end{tabular}} & $0$                                                                                        & for $i < -2d$                  & {\renewcommand{\arraystretch}{0.75}\begin{tabular}{c} finite \\ $2$-torsion \end{tabular}} & for $i > 1$      \\
      \hline
      $\widehat{H}^i_c (X_\et, \ZZ (n))$ & cofinite                                                                                   & {\renewcommand{\arraystretch}{0.75}\begin{tabular}{c} finite \\ $2$-torsion \end{tabular}} & for $i < 1$                    & $0$                                                                                        & for $i > 2d + 2$ \\
      \hline
      $H^i_c (X_\et, \ZZ (n))$           & cofinite                                                                                   & $0$                                                                                        & for $i < 1$                    & {\renewcommand{\arraystretch}{0.75}\begin{tabular}{c} finite \\ $2$-torsion \end{tabular}} & for $i > 2d + 2$ \\
      \hline
    \end{tabular}
  \end{center}
  In particular, $R\Gamma (X_\et, \ZZ^c (n))$ is an almost perfect complex,
  while $R\Gamma_c (X_\et, \ZZ (n))$ is almost of cofinite type in the sense of
  Definition~{\rm\ref{dfn:almost-of-(co)finite-type}}.

  \begin{proof}
    If $X (\RR) = \emptyset$, then our duality Theorem~\ref{theorem-I} gives
    \[ \Hom (H^{2-i} (X_\et, \ZZ^c (n)), \QQ/\ZZ) \cong
      \widehat{H}^i_c (X_\et, \ZZ (n)) \stackrel{X(\RR)=\emptyset}{\cong}
      H^i_c (X_\et, \ZZ (n)). \]
    We have $H^i_c (X_\et, \ZZ (n)) = 0$ for $i < 1$ by the definition of $\ZZ
      (n)$, and $H^i_c (X_\et, \ZZ (n)) = H^{i-1} (X_\et, \QQ/\ZZ(n)) = 0$ for $i >
      2d + 2$ for reasons of $\ell$-adic cohomological dimension \cite[Expos\'{e}~X,
    Th\'{e}or\`{e}me~6.2]{SGA4}. This proves part 1) of the proposition.

    In part 2), the group $H^i (X_\et, \ZZ^c (n))$ is finite $2$-torsion for $i >
      1$, thanks to part 1) and
    Lemma~\ref{lemma:morphism-hat-Hc(Xet,Z(n))->Hc(Xet,Z(n))}. Moreover, we have
    $H^i (X_\et, \ZZ^c (n)) \cong H^i (X_\et, \QQ^c (n))$ for $i < -2d$ according
    to \cite[Lemma~5.12]{Morin-2014}. Conjecture $L^c (X_\et, n)$ implies that
    these groups are $\QQ$-vector spaces finitely generated over $\ZZ$, hence
    trivial.

    In part 3), the cohomology groups $H^i (X_\et, \ZZ (n)) = H^{i-1} (X_\et,
      \QQ/\ZZ (n))$ are finite for $n < 0$ by \cite[Theorem~3]{Kahn-2003}.
  \end{proof}
\end{proposition}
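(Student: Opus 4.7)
The plan is to deduce everything from Theorem~\ref{theorem-I} combined with a direct analysis of $R\Gamma_c(X_\et, \ZZ(n))$. Taking cohomology in the duality quasi-isomorphism and using injectivity of $\QQ/\ZZ$ gives canonical isomorphisms
\[
\widehat{H}^i_c(X_\et, \ZZ(n)) \;\cong\; \Hom(H^{2-i}(X_\et, \ZZ^c(n)), \QQ/\ZZ),
\]
so under Conjecture~$\mathbf{L}^c(X_\et, n)$ the left-hand side is the Pontryagin dual of a finitely generated abelian group, hence of cofinite type. First I would bound the range of nonvanishing of $H^i_c(X_\et, \ZZ(n))$ directly: since $\ZZ(n) = \QQ/\ZZ(n)[-1]$, we have $H^i_c(X_\et, \ZZ(n)) = H^{i-1}_c(X_\et, \QQ/\ZZ(n))$, which vanishes for $i < 1$ (negative-degree vanishing of \'etale cohomology with compact support) and for $i > 2d+2$ by $\ell$-adic cohomological dimension (\cite[Exp.~X, Th.~6.2]{SGA4}). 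Combining this with Lemma~\ref{lemma:morphism-hat-Hc(Xet,Z(n))->Hc(Xet,Z(n))}, which says that $\widehat{H}^i_c \to H^i_c$ has finite $2$-torsion kernel and cokernel (even vanishing when $X(\RR)=\emptyset$), transfers the same bounds to $\widehat{H}^i_c$ up to finite $2$-torsion.

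Now I would run the duality. In part 1), $X(\RR) = \emptyset$ gives $\widehat{H}^i_c(X_\et, \ZZ(n)) = 0$ outside $1 \leq i \leq 2d+2$, so the displayed $\Hom$ vanishes there; since $H^{2-i}(X_\et, \ZZ^c(n))$ is finitely generated, a vanishing Pontryagin dual forces it to be zero, yielding $H^j(X_\et, \ZZ^c(n)) = 0$ for $j > 1$ or $j < -2d$. In part 2), the same duality combined with the ``finite $2$-torsion'' version of the bound for $i < 1$ shows that $H^j(X_\et, \ZZ^c(n))$ for $j > 1$ is a finitely generated abelian group with finite $2$-torsion Pontryagin dual, hence is itself finite $2$-torsion. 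For part 3), if $X/\FF_q$ then $X(\RR) = \emptyset$ so part 1) applies; moreover $H^{i-1}_c(X_\et, \QQ/\ZZ(n))$ is finite for $n < 0$ by \cite[Theorem~3]{Kahn-2003}, so the Pontryagin duals are finite and every $H^j(X_\et, \ZZ^c(n))$ is finite. The table and the structural assertions (almost perfect, almost of cofinite type) are then immediate repackagings.

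The main obstacle I expect is the vanishing $H^j(X_\et, \ZZ^c(n)) = 0$ for $j < -2d$ in the general case (not just $X(\RR)=\emptyset$). The duality-plus-cohomological-dimension route only controls $\widehat{H}^i_c(X_\et, \ZZ(n))$ for $i > 2d+2$ up to finite $2$-torsion contributed by $\widehat{H}^{i-1}_c(G_\RR, X(\CC), \ZZ(n))$, because the latter is $2$-periodic and need not vanish in arbitrarily high degree. Duality alone therefore only delivers ``finite $2$-torsion'' in these negative degrees, not zero. To upgrade to genuine vanishing I would invoke a separate rational comparison in the spirit of \cite[Lemma~5.12]{Morin-2014}: for $j$ sufficiently negative, $H^j(X_\et, \ZZ^c(n)) \cong H^j(X_\et, \QQ^c(n))$ is a $\QQ$-vector space, and by $\mathbf{L}^c(X_\et, n)$ it is also finitely generated over $\ZZ$, hence zero. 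Matching the cutoff produced by that argument with the expected value $-2d$ is the delicate final check.
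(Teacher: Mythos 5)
Your proposal takes essentially the same route as the paper's proof: duality (Theorem~\ref{theorem-I}) plus the degree bounds on $H^i_c (X_\et, \ZZ(n))$ and the finite $2$-torsion comparison of Lemma~\ref{lemma:morphism-hat-Hc(Xet,Z(n))->Hc(Xet,Z(n))} for parts 1) and 2), Kahn's finiteness theorem \cite{Kahn-2003} for part 3), and \cite[Lemma~5.12]{Morin-2014} together with $\mathbf{L}^c (X_\et, n)$ for the genuine vanishing in degrees $i < -2d$, which is exactly the ingredient the paper invokes (and that lemma gives the cutoff $-2d$ directly, so your ``delicate final check'' is immediate). The only slight overstatement is the unconditional vanishing of $H^i_c (X_\et, \ZZ(n))$ for $i > 2d+2$ in your first paragraph, which holds only when $X (\RR) = \emptyset$ (in general one only gets finite $2$-torsion there), a point your final paragraph effectively corrects.
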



\section{Complex $R\Gamma_\fg (X, \ZZ(n))$}
\label{sec:RGamma-fg}

The purpose of this section is to define auxiliary complexes $R\Gamma_\fg (X,
  \ZZ(n))$, which are used below in the construction of Weil-\'{e}tale
cohomology.

\begin{definition}
  \label{def:RGamma-fg}
  Assuming Conjecture $\mathbf{L}^c (X_\et,n)$ and $n < 0$,
  consider the morphism $\alpha_{X,n}$ in the derived category $\DZ$
  given by the composition
  \[ \begin{tikzcd}[column sep=4em]
      \RHom (R\Gamma (X_\et, \ZZ^c (n)), \QQ[-2]) \ar{r}{\QQ \twoheadrightarrow \QQ/\ZZ}\ar{ddr}[swap]{\alpha_{X,n}} & \RHom (R\Gamma (X_\et, \ZZ^c (n)), \QQ/\ZZ[-2]) \\
      & R\widehat{\Gamma}_c (X_\et, \ZZ (n)) \ar{u}{\text{Theorem~\ref{theorem-I}}}[swap]{\cong} \ar{d}{\text{proj.}} \\
      & R\Gamma_c (X_\et, \ZZ (n))
    \end{tikzcd} \]

  Here the first arrow is induced by the canonical projection $\QQ \to \QQ/\ZZ$,
  and the last arrow is the canonical projection from the modified cohomology
  with compact support to the usual cohomology with compact support (see
  Appendix~\ref{app:modified-cohomology-with-compact-support}).

  We define the complex $R\Gamma_\fg (X, \ZZ(n))$ as a cone of $\alpha_{X,n}$:
  \begin{multline*}
    \RHom (R\Gamma (X_\et, \ZZ^c (n)), \QQ [-2]) \xrightarrow{\alpha_{X,n}}
    R\Gamma_c (X_\et, \ZZ (n)) \to
    R\Gamma_\fg (X, \ZZ(n)) \\
    \to \RHom (R\Gamma (X_\et, \ZZ^c (n)), \QQ [-1])
  \end{multline*}
  Further, we denote
  $$H^i_\fg (X, \ZZ (n)) \dfn H^i (R\Gamma_\fg (X, \ZZ (n))).$$
\end{definition}

\begin{remark}
  \label{rmk:alpha-X-n-determined-by-cohomology}
  Under Conjecture $\mathbf{L}^c (X_\et, n)$, the groups
  $H^i_c (X_\et, \ZZ (n))$ for $n < 0$ are of cofinite type by Theorem~\ref{theorem-I},
  while $\RHom (R\Gamma (X_\et, \ZZ^c (n)), \QQ [-2])$ is a complex of
  $\QQ$-vector spaces. Therefore, the morphism $\alpha_{X,n}$ is completely
  determined by the maps between cohomology groups
  \[ H^i (\alpha_{X,n})\colon
    \Hom (H^{2-i} (X_\et, \ZZ^c (n)), \QQ) \to
    H^i_c (X_\et, \ZZ (n)) \]
  ---see Lemma~\ref{lemma:morphisms-in-DAb-between-cplx-of-Q-vs-and-almost-cofinite-type-cplx}.
\end{remark}

\begin{remark}
  We note that our $R\Gamma_\fg (X, \ZZ (n))$ plays the same role as
  $R\Gamma_W (\overline{X}_\et, \ZZ (n))$ in
  \cite[Definition~3.6]{Flach-Morin-2018}. We use a different notation since
  Flach and Morin work with the Artin--Verdier topology and their complex
  $R\Gamma_W (\overline{X}_\et, \ZZ (n))$ is perfect, while our complex can have
  finite $2$-torsion in arbitrarily high degree.
\end{remark}

We first note that the definition simplifies when $X$ has no real places.

\begin{proposition}
  \label{prop:RGamma-fg-for-X(R)-empty}
  If $X (\RR) = \emptyset$, then
  \[ R\Gamma_\fg (X, \ZZ (n)) \cong
    \RHom (R\Gamma (X_\et, \ZZ^c (n)), \ZZ [-1]). \]

  \begin{proof}
    In this case
    $R\widehat{\Gamma}_c (X_\et, \ZZ (n)) \to R\Gamma_c (X_\et, \ZZ (n))$
    is the identity morphism, and therefore $\alpha_{X,n}$ sits in the following
    commutative diagram with distinguished columns:
    \[ \begin{tikzcd}
        \RHom (R\Gamma (X_\et, \ZZ^c (n)), \QQ [-2])\ar{d}{\alpha_{X,n}} \ar{r}{\mathrm{id}} & \RHom (R\Gamma (X_\et, \ZZ^c (n)), \QQ [-2])\ar{d} \\
        R\Gamma_c (X_\et, \ZZ (n))\ar{d} \ar{r}{\cong}[swap]{\text{Theorem~\ref{theorem-I}}} & \RHom (R\Gamma (X_\et, \ZZ^c (n)), \QQ/\ZZ [-2])\ar{d} \\
        R\Gamma_\fg (X, \ZZ (n))\ar{d} \ar[dashed]{r}{\cong} & \RHom (R\Gamma (X_\et, \ZZ^c (n)), \ZZ [-1])\ar{d} \\
        \RHom (R\Gamma (X_\et, \ZZ^c (n)), \QQ [-1]) \ar{r}{\mathrm{id}} & \RHom (R\Gamma (X_\et, \ZZ^c (n)), \QQ [-1])
      \end{tikzcd} \]
    Here the first column is our definition of $R\Gamma_\fg (X, \ZZ (n))$, and the
    second column is induced by the distinguished triangle $\ZZ \to \QQ \to \QQ/\ZZ
      \to \ZZ [1]$.
  \end{proof}
\end{proposition}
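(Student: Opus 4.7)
The plan is to reduce the claim to an application of the triangulated category axiom TR3 (morphism of distinguished triangles), exploiting the fact that when $X (\RR) = \emptyset$ the modified cohomology $R\widehat{\Gamma}_c$ coincides with the ordinary $R\Gamma_c$.

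First I would invoke Lemma~\ref{lemma:morphism-hat-Hc(Xet,Z(n))->Hc(Xet,Z(n))}~(2), which says that for $X (\RR) = \emptyset$ the projection $R\widehat{\Gamma}_c (X_\et, \ZZ (n)) \to R\Gamma_c (X_\et, \ZZ (n))$ is an isomorphism. Combined with the duality isomorphism of Theorem~\ref{theorem-I}, we obtain a chain of isomorphisms
\[ \RHom (R\Gamma (X_\et, \ZZ^c (n)), \QQ/\ZZ [-2]) \xrightarrow{\cong} R\widehat{\Gamma}_c (X_\et, \ZZ (n)) \xrightarrow{\cong} R\Gamma_c (X_\et, \ZZ (n)). \]
Unwinding Definition~\ref{def:RGamma-fg}, the morphism $\alpha_{X,n}$ is then identified with the morphism $\RHom (R\Gamma (X_\et, \ZZ^c (n)), \QQ [-2]) \to \RHom (R\Gamma (X_\et, \ZZ^c (n)), \QQ/\ZZ [-2])$ induced by the canonical projection $\QQ \twoheadrightarrow \QQ/\ZZ$, composed with the above isomorphism.

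Next I would apply the triangulated functor $\RHom (R\Gamma (X_\et, \ZZ^c (n)), -)[-2]$ to the distinguished triangle $\ZZ \to \QQ \to \QQ/\ZZ \to \ZZ [1]$, obtaining the distinguished triangle
\[ \RHom (R\Gamma (X_\et, \ZZ^c (n)), \ZZ [-2]) \to \RHom (R\Gamma (X_\et, \ZZ^c (n)), \QQ [-2]) \to \RHom (R\Gamma (X_\et, \ZZ^c (n)), \QQ/\ZZ [-2]) \to \]
whose shifted third vertex is $\RHom (R\Gamma (X_\et, \ZZ^c (n)), \ZZ [-1])$. Placing this triangle in parallel with the defining triangle of $R\Gamma_\fg (X, \ZZ (n))$, with the identity on $\RHom (R\Gamma (X_\et, \ZZ^c (n)), \QQ [-2])$ on the left and the isomorphism above on the middle term, by TR3 there exists a morphism completing this to a morphism of triangles; since two out of three vertical arrows are isomorphisms, so is the third, giving the desired identification $R\Gamma_\fg (X, \ZZ (n)) \cong \RHom (R\Gamma (X_\et, \ZZ^c (n)), \ZZ [-1])$.

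The only potential subtlety is the non-uniqueness of the third morphism produced by TR3, but this does not affect the \emph{existence} of an isomorphism of the two cones, which is all that is asserted. (Uniqueness, if desired, would follow from the kind of cohomological rigidity noted in Remark~\ref{rmk:alpha-X-n-determined-by-cohomology}, but is not needed for the proposition as stated.) Hence no serious obstacle arises; the proof amounts to arranging the two distinguished triangles side by side, with the middle row being an isomorphism thanks to Theorem~\ref{theorem-I} and the vanishing of the $G_\RR$-equivariant correction term.
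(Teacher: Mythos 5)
Your proposal is correct and follows essentially the same route as the paper: identify $\alpha_{X,n}$ (via Lemma~\ref{lemma:morphism-hat-Hc(Xet,Z(n))->Hc(Xet,Z(n))}~(2) and Theorem~\ref{theorem-I}) with the map induced by $\QQ \twoheadrightarrow \QQ/\ZZ$, then compare the defining triangle of $R\Gamma_\fg (X, \ZZ(n))$ with the triangle obtained by applying $\RHom (R\Gamma (X_\et, \ZZ^c (n)), -)$ to $\ZZ \to \QQ \to \QQ/\ZZ \to \ZZ[1]$, and conclude by TR3 and the triangulated five-lemma. This is exactly the content of the commutative diagram in the paper's proof, so no further comment is needed.
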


\begin{proposition}
  \label{prop:RGammafg-almost-perfect}
  Assuming Conjecture $\mathbf{L}^c (X_\et, n)$, the complex
  $R\Gamma_\fg (X, \ZZ (n))$ for $n < 0$ is almost perfect in the sense of
  Definition~{\rm\ref{dfn:almost-of-(co)finite-type}}, i.e. its cohomology
  groups $H^i_\fg (X, \ZZ (n))$ are finitely generated, trivial for $i \ll 0$,
  and $2$-torsion for $i \gg 0$.

  \begin{proof}
    By the definition of $R\Gamma_\fg (X, \ZZ (n))$, there are short exact
    sequences
    \[ 0 \to \coker H^i (\alpha_{X,n}) \to
      H^i_\fg (X, \ZZ (n)) \to
      \ker H^{i+1} (\alpha_{X,n}) \to 0 \]

    The morphism $\alpha_{X,n}$ is given at the level of cohomology by
    \begin{multline*}
      H^i (\alpha_{X,n})\colon
      \Hom (H^{2-i} (X_\et, \ZZ^c (n)), \QQ) \xrightarrow{\psi^i}
      \Hom (H^{2-i} (X_\et, \ZZ^c (n)), \QQ/\ZZ) \xrightarrow{\cong} \\
      \widehat{H}^i_c (X_\et, \ZZ (n)) \xrightarrow{\phi^i} H^i_c (X_\et, \ZZ (n))
    \end{multline*}
    where $H^{2-i} (X_\et, \ZZ^c (n))$ is a finitely generated abelian group
    according to $\mathbf{L}^c (X_\et, n)$. We consider the ker-coker exact sequence
    (ignoring the isomorphism in the middle)
    \begin{multline*}
      0 \to \underbrace{\Hom (H^{2-i} (X_\et, \ZZ^c (n)), \ZZ)}_{\cong \ker \psi^i} \to
      \ker H^i (\alpha_{X,n}) \to
      \ker\phi^i \to \\
      \underbrace{\Hom (H^{2-i} (X_\et, \ZZ^c (n))_\tor, \QQ/\ZZ)}_{\cong \coker\psi^i} \to
      \coker H^i (\alpha_{X,n}) \to
      \coker\phi^i \to 0
    \end{multline*}

    Here $\ker\phi^i$ and $\coker\phi^i$ are finite $2$-torsion according to
    Lemma~\ref{lemma:morphism-hat-Hc(Xet,Z(n))->Hc(Xet,Z(n))}, and $H^\bullet
      (X_\et, \ZZ^c (n))$ are finitely generated by $\mathbf{L}^c (X_\et, n)$. This
    establishes finite generation of $\ker H^{i+1} (\alpha_{X,n})$ and $\coker H^i
      (\alpha_{X,n})$, and hence of $H^i_\fg (X, \ZZ (n))$.

    From the description of cohomology groups in
    Proposition~\ref{prop:motivic-cohomology-duality-consequences}, for $i \ll 0$
    we have $\Hom (H^{2-i} (X_\et, \ZZ^c (n)), \QQ) = H^i_c (X_\et, \ZZ(n)) = 0$,
    and hence $H^i_\fg (X, \ZZ (n)) = 0$. On the other hand, for $i \gg 0$ we have
    $\Hom (H^{2-i} (X_\et, \ZZ^c (n)), \QQ) = 0$, so that $H^i_\fg (X, \ZZ (n))
      \cong H^i_c (X_\et, \ZZ (n))$ is a finite $2$-torsion group.
  \end{proof}
\end{proposition}

\begin{proposition}
  \label{prop:RGamma-fg-well-defined}
  The complex $R\Gamma_\fg (X, \ZZ (n))$ is defined up to a unique isomorphism
  in the derived category $\DZ$.

  \begin{proof}
    The complex $\RHom (R\Gamma (X_\et, \ZZ^c (n)), \QQ [-2])$ consists of
    $\QQ$-vector spaces, and $R\Gamma_\fg (X, \ZZ (n))$ is almost perfect, so we
    are in the situation of Corollary~\ref{cor:TR3-TR1-with-uniqueness}.
  \end{proof}
\end{proposition}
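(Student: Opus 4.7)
The plan is to invoke Corollary~\ref{cor:TR3-TR1-with-uniqueness} from Appendix~\ref{app:homological-algebra}, which gives a general criterion under which a cone in the derived category is determined up to a unique isomorphism. The two ingredients I need to check are structural properties of the source and of the candidate cone in the defining distinguished triangle.

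First, I would verify that the source complex $\RHom(R\Gamma(X_\et, \ZZ^c(n)), \QQ[-2])$ has cohomology that is a $\QQ$-vector space in each degree. Indeed, by Conjecture $\mathbf{L}^c(X_\et, n)$ the group $H^{2-i}(X_\et, \ZZ^c(n))$ is finitely generated, so its $\QQ$-dual $\Hom(H^{2-i}(X_\et, \ZZ^c(n)), \QQ)$ is a finite-dimensional $\QQ$-vector space. Second, the candidate complex $R\Gamma_\fg(X, \ZZ(n))$ is almost perfect by Proposition~\ref{prop:RGammafg-almost-perfect}. These are exactly the input conditions of the corollary.

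The mechanism underlying the corollary is a careful analysis of the torsor of completions in axiom (TR3): any ambiguity in filling a morphism of distinguished triangles lives in a subset of $\Hom_\DZ(A[1], C)$ (with $A$ the source and $C$ the cone), further constrained by compatibility with the boundary map $C \to A[1]$; the dichotomy between $\QQ$-vector space cohomology on the source and finitely generated (eventually finite $2$-torsion) cohomology on the cone forces this subset to vanish. The main subtlety, which I expect to be the hardest point and which is handled in the appendix, is that the plain group $\Hom_\DZ(A[1], C)$ is in general \emph{not} zero --- for instance, $\Ext^1_\ZZ(\QQ, \ZZ) = \widehat{\ZZ}/\ZZ$ is nonzero --- so the uniqueness genuinely exploits the triangle-compatibility constraint on fillings rather than a naive Hom-vanishing. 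Once that technical input is in place, the proof of the present proposition is essentially the observation that our setup fits the hypotheses of the corollary; no further work beyond this verification and invocation is required.
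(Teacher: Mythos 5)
Your proposal is correct and follows the paper's own proof exactly: verify that the source complex has $\QQ$-vector-space cohomology (from finite generation of $H^{2-i}(X_\et, \ZZ^c(n))$ under Conjecture $\mathbf{L}^c(X_\et,n)$), verify that the cone is almost perfect (Proposition~\ref{prop:RGammafg-almost-perfect}), and invoke Corollary~\ref{cor:TR3-TR1-with-uniqueness}. Your further remarks on why the corollary works are also essentially accurate --- the paper's Lemma~\ref{lemma:TR3-TR1-with-uniqueness-general-statement} rests on showing the map $w^*\colon \Hom_\DZ(A^\bullet[1],C'^\bullet)\to\Hom_\DZ(C^\bullet,C'^\bullet)$ vanishes because the source is divisible while the target has no nontrivial divisible subgroup (Lemma~\ref{lemma:morphisms-inDAb-not-divisible}), not because $\Hom_\DZ(A^\bullet[1],C'^\bullet)$ itself vanishes, which as you note it generally does not.
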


\begin{proposition}
  \label{prop:tensoring-RGammafg-with-Z/m-and-Q}
  Suppose that Conjecture $\mathbf{L}^c (X_\et,n)$ holds and consider the
  distinguished triangle defining $R\Gamma_\fg (X, \ZZ (n))$ for $n < 0$:
  \begin{multline*}
    \RHom (R\Gamma (X_\et, \ZZ^c (n)), \QQ [-2]) \xrightarrow{\alpha_{X,n}}
    R\Gamma_c (X_\et, \ZZ (n)) \xrightarrow{f}
    R\Gamma_\fg (X, \ZZ (n)) \\
    \xrightarrow{g} \RHom (R\Gamma (X_\et, \ZZ^c (n)), \QQ [-1])
  \end{multline*}

  \begin{enumerate}
    \item[$1)$] The morphism $g$ induces an isomorphism
          \[ g\otimes \QQ\colon R\Gamma_\fg (X, \ZZ (n)) \otimes \QQ \xrightarrow{\cong}
            \RHom (R\Gamma (X_\et, \ZZ^c (n)), \QQ [-1]).\]

    \item[$2)$] For each $m \ge 1$ the morphism $f$ induces an
          isomorphism
          \[ f\otimes \ZZ/m\ZZ\colon
            R\Gamma_c (X_\et, \ZZ (n))\otimes^\mathbf{L} \ZZ/m\ZZ \xrightarrow{\cong}
            R\Gamma_\fg (X, \ZZ (n))\otimes^\mathbf{L} \ZZ/m\ZZ \]

    \item[$3)$] For any prime $\ell$ the morphism $f$ induces an isomorphism
          $$\varprojlim_r H_c^i (X_\et, \ZZ/\ell^r (n)) \cong H_\fg^i (X, \ZZ (n)) \otimes \ZZ_\ell.$$
  \end{enumerate}

  \begin{proof}
    The groups $H_c^i (X_\et, \ZZ (n))$ are all torsion, and therefore
    $R\Gamma_c (X_\et, \ZZ (n)) \otimes \QQ \cong 0$ in the derived
    category. Similarly, the complexes of $\QQ$-vector spaces
    $\RHom (R\Gamma (X_\et, \ZZ^c (n)), \QQ [\cdots])$ are killed by tensoring
    with $\ZZ/m\ZZ$.  This proves 1) and 2).

    Now 2) implies 3): by the finite generation of $H_\fg^i (X, \ZZ (n))$, we have
    \[ \varprojlim_r H_c^i (X_\et, \ZZ/\ell^r (n)) \stackrel{\text{2)}}{\cong}
      \varprojlim_r H_\fg^i (X, \ZZ/\ell^r (n)) \cong
      \varprojlim_r H_\fg^i (X, \ZZ (n))/\ell^r \cong
      H_\fg^i (X, \ZZ (n)) \otimes \ZZ_\ell. \qedhere \]
  \end{proof}
\end{proposition}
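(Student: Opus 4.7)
The plan is to exploit the complementary algebraic natures of the two outer terms of the defining distinguished triangle versus its middle term $R\Gamma_c (X_\et, \ZZ(n))$. The key structural observation is that $\RHom (R\Gamma (X_\et, \ZZ^c (n)), \QQ[k])$ is a complex of $\QQ$-vector spaces for every $k$, while $R\Gamma_c (X_\et, \ZZ(n))$ has entirely torsion cohomology: by Definition~\ref{dfn:sheaf-Z(n)} we have $\ZZ(n) = \QQ/\ZZ(n)[-1]$, so $H^i_c (X_\et, \ZZ(n)) \cong H^{i-1}_c (X_\et, \QQ/\ZZ(n))$ is torsion. Consequently $R\Gamma_c (X_\et, \ZZ(n)) \otimes \QQ \cong 0$ in $\DZ$, while each complex of $\QQ$-vector spaces is killed by $-\otimes^\mathbf{L} \ZZ/m\ZZ$ for $m \ge 1$.

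For parts 1) and 2), I would simply tensor the defining triangle with $\QQ$, respectively with $\ZZ/m\ZZ$. In the first case the middle term vanishes, so the connecting morphism $g \otimes \QQ$ is forced to be an isomorphism; in the second case both flanking terms vanish, so $f \otimes \ZZ/m\ZZ$ is an isomorphism. Both arguments are immediate consequences of the triangulated structure.

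Part 3) I would deduce from part 2) by passing to the $\ell$-adic limit. The universal coefficient exact sequence applied to the almost perfect complex $R\Gamma_\fg (X, \ZZ(n))$---which is almost perfect by Proposition~\ref{prop:RGammafg-almost-perfect}---reads
\begin{equation*}
0 \to H^i_\fg (X, \ZZ(n))/\ell^r \to H^i_\fg (X, \ZZ/\ell^r (n)) \to {}_{\ell^r} H^{i+1}_\fg (X, \ZZ(n)) \to 0.
\end{equation*}
After taking $\varprojlim_r$, the left-hand system has surjective transition maps, so Mittag--Leffler applies and the limit is $H^i_\fg (X, \ZZ(n)) \otimes \ZZ_\ell$. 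The right-hand system is eventually constant in $r$ (since on a finitely generated abelian group the $\ell^r$-torsion stabilizes to the finite $\ell$-primary part), and the transition maps, being multiplication by $\ell$ on a finite $\ell$-group, are eventually zero; hence the limit and its $\varprojlim^1$ both vanish. Combining this with the isomorphism from part 2) identifies the $\ell$-adic limit on the $R\Gamma_c$ side with $H^i_\fg (X, \ZZ(n)) \otimes \ZZ_\ell$.

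No serious obstacle is anticipated here: the proof is a mechanical diagram chase from the defining triangle, and the only subtle point is the Mittag--Leffler / nilpotence argument in part 3), which rests crucially on the finite generation of $H^i_\fg (X, \ZZ(n))$ guaranteed by Conjecture $\mathbf{L}^c (X_\et, n)$ via Proposition~\ref{prop:RGammafg-almost-perfect}.
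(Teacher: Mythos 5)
Your proposal is correct and follows essentially the same route as the paper: parts 1) and 2) by tensoring the defining triangle with $\QQ$ and $\ZZ/m\ZZ$ and using that $R\Gamma_c (X_\et, \ZZ(n))$ has torsion cohomology while the $\RHom(-,\QQ[\cdots])$ terms are complexes of $\QQ$-vector spaces, and part 3) by passing to the limit over $r$ using finite generation of $H^i_\fg (X, \ZZ(n))$. Your explicit universal-coefficient and Mittag--Leffler argument in part 3) just spells out the step the paper compresses into the isomorphism $\varprojlim_r H^i_\fg (X, \ZZ/\ell^r(n)) \cong \varprojlim_r H^i_\fg (X, \ZZ(n))/\ell^r$.
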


The groups $H_\fg^i (X, \ZZ (n))$ provide an integral model for $\ell$-adic
cohomology in the following sense (see also \cite[\S 8]{Geisser-2004}).

\begin{corollary}
  \label{cor:RGamma-fg-model-for-l-adic-cohomology}
  Let $X$ be an arithmetic scheme satisfying Conjecture
  $\mathbf{L}^c (X_\et, n)$ for $n < 0$. Then
  $$H_\fg^i (X, \ZZ (n)) \otimes \ZZ_\ell \cong H^i_c (X [1/\ell]_\et, \ZZ_\ell (n)),$$
  where the right-hand side denotes $\ell$-adic cohomology with compact support.

  \begin{proof}
    We have $\ZZ (n)/\ell^r \cong j_{\ell!} \mu_m^{\otimes n}$.
    Now by part 3) of the previous proposition,
    \[ H_\fg^i (X, \ZZ (n)) \otimes \ZZ_\ell \cong
      \varprojlim_r H_c^i (X_\et, j_{\ell!} \mu_{\ell^r}^{\otimes n}) \cong
      \varprojlim_r H_c^i (X [1/\ell]_\et, \mu_{\ell^r}^{\otimes n})
      \stackrel{\text{dfn}}{=} H_c^i (X [1/\ell]_\et, \ZZ_\ell (n)). \qedhere \]
  \end{proof}
\end{corollary}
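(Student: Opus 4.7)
The plan is to combine part 3) of Proposition~\ref{prop:tensoring-RGammafg-with-Z/m-and-Q} with the standard compatibility of extension-by-zero along an open immersion with compactly supported étale cohomology, and then to pass to the limit. Concretely, I would write
\[ H_\fg^i (X, \ZZ (n)) \otimes \ZZ_\ell \;\cong\; \varprojlim_r H_c^i (X_\et, \ZZ/\ell^r (n)) \]
using part 3) of the previous proposition as the first (and essentially only substantive) step.

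Next I would identify the coefficient sheaves: I claim that in the derived category of sheaves on $X_\et$ one has $\ZZ(n) \otimes^\mathbf{L} \ZZ/\ell^r\ZZ \cong j_{\ell!}\,\mu_{\ell^r}^{\otimes n}$, so that $H_c^i(X_\et,\ZZ/\ell^r(n)) \cong H_c^i(X_\et, j_{\ell!}\mu_{\ell^r}^{\otimes n})$. This follows from Definition~\ref{dfn:sheaf-Z(n)}: since $\ZZ(n) = \QQ/\ZZ(n)[-1]$ and $\QQ/\ZZ(n) = \bigoplus_p \varinjlim_r j_{p!}\mu_{p^r}^{\otimes n}$ is divisible, multiplication by $\ell^r$ on $\QQ/\ZZ(n)$ is an epimorphism with kernel $j_{\ell!}\mu_{\ell^r}^{\otimes n}$, and the resulting short exact sequence realizes $j_{\ell!}\mu_{\ell^r}^{\otimes n}$ as the mapping cone of $\ZZ(n) \xrightarrow{\ell^r} \ZZ(n)$ after a shift.

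Then I would invoke the standard identity $R\Gamma_c(X_\et, j_{\ell!}\mathcal{F}) \cong R\Gamma_c(X[1/\ell]_\et,\mathcal{F})$ for the open immersion $j_\ell\colon X[1/\ell]\hookrightarrow X$ and $\mathcal{F}=\mu_{\ell^r}^{\otimes n}$ (a property of compactly supported cohomology that follows directly from its construction via a compactification of $X$ — see the appendix on $R\Gamma_c$). Passing to the inverse limit over $r$ yields the $\ell$-adic cohomology with compact support $H_c^i(X[1/\ell]_\et, \ZZ_\ell(n))$ by definition, completing the chain of isomorphisms.

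There is no real obstacle here: the bulk of the content sits in Proposition~\ref{prop:tensoring-RGammafg-with-Z/m-and-Q}(3), which already converts $H^i_\fg(X,\ZZ(n))\otimes\ZZ_\ell$ into a limit of compactly supported mod-$\ell^r$ groups (and implicitly uses Conjecture $\mathbf{L}^c (X_\et, n)$ to commute $\varprojlim_r$ with the cohomology of the finitely generated group $H^i_\fg(X,\ZZ(n))$). The remaining steps are purely formal identifications at the level of coefficient sheaves.
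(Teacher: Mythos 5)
Your proposal is correct and follows essentially the same route as the paper: apply part 3) of Proposition~\ref{prop:tensoring-RGammafg-with-Z/m-and-Q}, identify $\ZZ(n)\otimes^{\mathbf{L}}\ZZ/\ell^r\ZZ\cong j_{\ell!}\mu_{\ell^r}^{\otimes n}$ (which the paper states without proof and you verify via divisibility of $\QQ/\ZZ(n)$), use $R\Gamma_c(X_\et,j_{\ell!}\mathcal{F})\cong R\Gamma_c(X[1/\ell]_\et,\mathcal{F})$, and pass to the limit over $r$. No gaps; your spelled-out verification of the coefficient identification is just a more detailed version of the paper's first line.
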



\section{Proof of Theorem~II}
\label{sec:theorem-II}

The aim of this section is to prove Theorem~\ref{theorem-II}. We recall that it
states that the morphism of complexes $u_\infty^*$, defined as the composition
\[ \begin{tikzcd}
    R\Gamma_c (X_\et, \ZZ(n)) \ar[equals]{d}\ar[dashed]{r}{u_\infty^*} & R\Gamma_c (G_\RR, X (\CC), \ZZ (n)) \\
    R\Gamma_c (X_\et, \QQ/\ZZ (n)) [-1] \ar{r}{v_\infty^* [-1]} & R\Gamma_c (G_\RR, X (\CC), \QQ/\ZZ (n)) [-1] \ar{u}
  \end{tikzcd} \]
is torsion. Here $v_\infty^*\colon R\Gamma_c (X_\et, \QQ/\ZZ (n)) \to R\Gamma_c
  (G_\RR, X (\CC), \QQ/\ZZ (n))$ is induced by the comparison functor
$\alpha^*\colon \mathbf{Sh} (X_\et) \to \mathbf{Sh} (G_\RR, X (\CC))$, as
explained in Proposition~\ref{prop:inverse-image-gamma}. We first ensure that
$\alpha^*$ identifies the sheaf $\QQ/\ZZ (n)$ on $X_\et$ from
Definition~\ref{dfn:sheaf-Z(n)} with the $G_\RR$-equivariant sheaf $\QQ/\ZZ (n)
  \dfn \frac{(2\pi i)^n\,\QQ}{(2\pi i)^n\,\ZZ}$ on $X (\CC)$.

\begin{proposition}
  \label{propn:image-of-Q/Zn-under-alpha}
  For the sheaf $\QQ/\ZZ (n)$ on $X_\et$ we have an isomorphism of
  $G_\RR$-equivariant constant sheaves on $X (\CC)$
  $$\alpha^* \QQ/\ZZ (n) \cong \QQ/\ZZ (n).$$

  \begin{proof}
    We first compute that the functor $\alpha^*$ sends the sheaf
    $\mu_m^{\otimes n}$ on $X_\et$ to the constant $G_\RR$-equivariant sheaf
    $\frac{(2\pi i)^n\,\ZZ}{m\,(2\pi i)^n\,\ZZ}$ on $X(\CC)$:
    \begin{align*}
      \alpha^* \mu_m^{\otimes n} & \cong \mu_m (\CC)^{\otimes n} \dfn \iHom (\mu_m (\CC)^{\otimes (-n)}, \ZZ/m\ZZ) \\
                                 & \cong \frac{(2\pi i)^n\,\ZZ}{m\,(2\pi i)^n\,\ZZ}
    \end{align*}
    ---here the first isomorphism comes from the definition of $\alpha^*$ given
    in Appendix~\ref{app:modified-cohomology-with-compact-support}, and the
    second isomorphism comes from the corresponding isomorphism of
    $G_\RR$-modules.

    Since $\alpha^*$ preserves colimits
    (Lemma~\ref{lemma:alpha-preserves-colimits}), we have
    \[
      \alpha^* \QQ/\ZZ (n)
      = \alpha^* \Bigl(\bigoplus_p \varinjlim_r j_{p!} \mu_{p^r}^{\otimes n}\Bigr)
      \cong \varinjlim_m \alpha^* \mu_m^{\otimes n}
      \cong \varinjlim_m \frac{(2\pi i)^n\,\ZZ}{m\,(2\pi i)^n\,\ZZ}
      \cong \frac{(2\pi i)^n\,\QQ}{(2\pi i)^n\,\ZZ}.
      \qedhere
    \]
  \end{proof}
\end{proposition}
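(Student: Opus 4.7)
The plan is to decompose $\QQ/\ZZ(n)$ according to Definition~\ref{dfn:sheaf-Z(n)} as $\bigoplus_p \varinjlim_r j_{p!} \mu_{p^r}^{\otimes n}$, push each piece through $\alpha^*$, and reassemble. Since $\alpha^*$ is a left adjoint and therefore commutes with colimits (Lemma~\ref{lemma:alpha-preserves-colimits}), the problem reduces to identifying $\alpha^*(j_{p!} \mu_{p^r}^{\otimes n})$ with the $G_\RR$-equivariant constant sheaf $(2\pi i)^n \ZZ / p^r (2\pi i)^n \ZZ$ on $X(\CC)$ for each prime $p$ and each $r \ge 1$, and then passing to the filtered colimit.

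First I would pin down $\alpha^* \mu_{p^r}$. The étale sheaf $\mu_{p^r}$ is defined on $X[1/p]$, and under the comparison functor $\alpha^*$ (whose action on representable and constant sheaves is recalled in the appendix) it should go to the constant $G_\RR$-equivariant sheaf $\mu_{p^r}(\CC)$ on $X[1/p](\CC)$. The key observation that makes the extension-by-zero $j_{p!}$ harmless is that $p$ is a unit in $\CC$, so $X[1/p](\CC) = X(\CC)$ as $G_\RR$-spaces. Since $\alpha^*$ is exact and commutes with internal $\iHom$ into constant sheaves, passing to the $n$-th twist yields $\alpha^*(j_{p!}\mu_{p^r}^{\otimes n}) \cong \mu_{p^r}(\CC)^{\otimes n}$, where the right-hand side is interpreted via $\iHom(\mu_{p^r}(\CC)^{\otimes(-n)}, \ZZ/p^r\ZZ)$ for $n<0$, exactly as in Definition~\ref{dfn:sheaf-Z(n)}.

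Next I would match $\mu_{p^r}(\CC)^{\otimes n}$ with $(2\pi i)^n \ZZ / p^r (2\pi i)^n \ZZ$ as $G_\RR$-modules. This comes from the classical isomorphism $\mu_{p^r}(\CC) \cong (2\pi i)\ZZ / p^r (2\pi i)\ZZ$ given by $\zeta \mapsto \log \zeta$ modulo $p^r \cdot (2\pi i) \ZZ$, on which complex conjugation acts by negation of $2\pi i$ on both sides. Taking tensor powers (or the internal Hom, for $n<0$) propagates this identification to arbitrary twists with the correct $G_\RR$-action.

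Assembling the pieces, commuting $\alpha^*$ past the colimit and the direct sum produces $\alpha^* \QQ/\ZZ(n) \cong \varinjlim_m (2\pi i)^n \ZZ / m (2\pi i)^n \ZZ \cong (2\pi i)^n \QQ / (2\pi i)^n \ZZ$, which is precisely the $G_\RR$-equivariant sheaf $\QQ/\ZZ(n)$. The main subtlety will be the careful bookkeeping of the $G_\RR$-action through each identification; once the equivariant version of $\alpha^* \mu_m \cong \mu_m(\CC)$ is granted, the remaining steps are formal manipulations using exactness of $\alpha^*$ and compatibility with colimits.
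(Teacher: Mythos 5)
Your proposal is correct and follows essentially the same route as the paper: identify $\alpha^* \mu_m^{\otimes n}$ with the constant $G_\RR$-equivariant sheaf $(2\pi i)^n\,\ZZ / m\,(2\pi i)^n\,\ZZ$ via the classical isomorphism $\mu_m(\CC) \cong (2\pi i)\ZZ/m(2\pi i)\ZZ$, and then commute $\alpha^*$ past the direct sum and filtered colimit using Lemma~\ref{lemma:alpha-preserves-colimits}. Your explicit remark that $j_{p!}$ is harmless because $X[1/p](\CC) = X(\CC)$ is a detail the paper leaves implicit, but it does not change the argument.
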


We proceed with our proof of Theorem~\ref{theorem-II}. Our argument follows the
proof of \cite[Lemma 3.25]{Flach-Morin-2018}. We'll need following result about
$\ell$-adic cohomology.

\begin{proposition}
  \label{prop:l-adic-cohomology-key-lemma}
  Let $X$ be an arithmetic scheme and $n < 0$. Then for any prime $\ell$ we have
  $$(H^i_c (X_{\overline{\QQ},\text{\it \'{e}t}}, \QQ_\ell/\ZZ_\ell (n))^{G_\QQ})_\div = 0.$$

  \begin{proof}
    We claim that for a suitable choice of prime $p \ne \ell$,
    \[
      H^i_c (X_{\overline{\QQ}, \text{\it \'{e}t}}, \ZZ_\ell (n)) \cong
      H^i_c (X_{\overline{\FF_p}, \text{\it \'{e}t}}, \ZZ_\ell (n))
      \quad\text{for all }i.
    \]

    We have $f\colon X \to \Spec \ZZ$, separated, of finite type. $\ZZ_\ell (n)$ is
    a constructible $\ZZ_\ell$-sheaf on $X$ in the sense of \cite[Expos\'{e} VI,
    1.1.1]{SGA5}, and by [ibid., 2.2.2], $R^i f_! \ZZ_\ell (n)$ is a constructible
    $\ZZ_\ell$-sheaf on $\Spec \ZZ$. Now [ibid., 1.2.6] implies that there exists
    an open subscheme $U = \Spec \ZZ_S \subset \Spec \ZZ$ such that $R^i f_!
      \ZZ_\ell (n)$ is a twisted constant sheaf on $U$. We may take a finite set of
    primes $S$ such that this holds for all $i$. Then for $p \notin S$, the proper
    base change for constructible sheaves [ibid. 2.2.3] applied to the diagram
    \[ \begin{tikzcd}
        X_{\overline{\QQ}} \ar{r}\tikzpb\ar{d} & X_U \ar{d}{f_U} & X_{\overline{\FF_p}} \ar{l}\ar{d}\tikzpbur \\
        \Spec \overline{\QQ} \ar{r}[swap]{\overline{\eta}} & \Spec \ZZ_S & \Spec \overline{\FF_p} \ar{l}{\overline{x}}
      \end{tikzcd} \]
    gives us an isomorphism
    \begin{equation}
      \label{eqn:iso-pbc-Zl-Gal(QS/Q)}
      H^i_c (X_{\overline{\QQ}, \text{\it \'{e}t}}, \ZZ_\ell (n)) \cong
      (R^i f_{U,!} \ZZ_\ell (n))_{\overline{\eta}} \cong
      (R^i f_{U,!} \ZZ_\ell (n))_{\overline{x}} \cong
      H^i_c (X_{\overline{\FF_p}, \text{\it \'{e}t}}, \ZZ_\ell (n)).
    \end{equation}

    We denote by $I_p$ the inertia subgroup of the absolute Galois group
    $G_{\QQ_p}$: $$1 \to I_p \to G_{\QQ_p} \to G_{\FF_p} \to 1$$

    The isomorphism \eqref{eqn:iso-pbc-Zl-Gal(QS/Q)} is equivariant under the
    $G_{\QQ_p}$-action on the left-hand side and $G_{\FF_p}$-action on the
    right-hand side. We have
    \[ H^i_c (X_{\overline{\QQ}, \text{\it \'{e}t}}, \QQ_\ell/\ZZ_\ell (n))^{G_\QQ} \rightarrowtail
      H^i_c (X_{\overline{\QQ}, \text{\it \'{e}t}}, \QQ_\ell/\ZZ_\ell (n))^{G_{\QQ_p}/I_p}
      \cong H^i_c (X_{\overline{\FF_p}, \text{\it \'{e}t}}, \QQ_\ell/\ZZ_\ell (n))^{G_{\FF_p}}, \]
    so it suffices to show that $$(H^i_c (X_{\overline{\FF_p},\text{\it \'{e}t}},
      \QQ_\ell/\ZZ_\ell (n))^{G_{\FF_p}})_\div = 0.$$

    The long exact sequence of $G_{\FF_p}$-modules
    \begin{multline*}
      \cdots \to
      H^i_c (X_{\overline{\FF_p},\text{\it \'{e}t}}, \ZZ_\ell (n)) \to
      H^i_c (X_{\overline{\FF_p},\text{\it \'{e}t}}, \QQ_\ell (n)) \to
      H^i_c (X_{\overline{\FF_p},\text{\it \'{e}t}}, \QQ_\ell/\ZZ_\ell (n)) \\
      \to H^{i+1}_c (X_{\overline{\FF_p},\text{\it \'{e}t}}, \ZZ_\ell (n)) \to
      \cdots
    \end{multline*}
    induces short exact sequences
    \begin{equation}
      \label{eqn:Zl-Ql-Ql/Zl-ses}
      0 \to H^i_c (X_{\overline{\FF_p},\text{\it \'{e}t}}, \ZZ_\ell (n))_\cotor \to
      H^i_c (X_{\overline{\FF_p},\text{\it \'{e}t}}, \QQ_\ell (n)) \to
      H^i_c (X_{\overline{\FF_p},\text{\it \'{e}t}}, \QQ_\ell/\ZZ_\ell (n))_\div \to 0
    \end{equation}
    Here $H^i_c (X_{\overline{\FF_p},\text{\it \'{e}t}}, \ZZ_\ell (n))_\cotor \dfn
      \frac{H^i_c (X_{\overline{\FF_p},\text{\it \'{e}t}}, \ZZ_\ell (n))}{H^i_c (X_{\overline{\FF_p},\text{\it \'{e}t}}, \ZZ_\ell (n))_\tor}$,
    and we use that $H^i_c (X_{\overline{\FF_p},\text{\it \'{e}t}}, \ZZ_\ell (n))$
    are finitely generated $\ZZ_\ell$-modules, hence have no nontrivial divisible subgroups.

    According to \cite[Expos\'{e}~XXI, 5.5.3]{SGA7}, the eigenvalues of the
    geometric Frobenius acting on $H^i_c (X_{\overline{\FF_p},\text{\it \'{e}t}},
      \QQ_\ell)$ are algebraic integers. After twisting $\QQ_\ell$ by $n$, the
    eigenvalues will lie in $p^{-n}\,\overline{\ZZ}$. Since $n < 0$ by our
    assumption, this implies that $1$ does not appear as an eigenvalue, and hence
    $$H^i_c (X_{\overline{\FF_p},\text{\it \'{e}t}}, \QQ_\ell (n))^{G_{\FF_p}} =
      0.$$ Thus, after taking the $G_{\FF_p}$-invariants in
    \eqref{eqn:Zl-Ql-Ql/Zl-ses}, we obtain
    \[
      0 \to (H^i_c (X_{\overline{\FF_p},\text{\it \'{e}t}}, \QQ_\ell/\ZZ_\ell (n))_\div)^{G_{\FF_p}} \to
      H^1 (G_{\FF_p}, H^i_c (X_{\overline{\FF_p},\text{\it \'{e}t}}, \ZZ_\ell (n))_\cotor) \to \cdots
    \]
    This gives a monomorphism between the maximal divisible subgroups
    \[ ((H^i_c (X_{\overline{\FF_p},\text{\it \'{e}t}}, \QQ_\ell/\ZZ_\ell (n))_\div)^{G_{\FF_p}})_\div \rightarrowtail
      H^1 (G_{\FF_p}, H^i_c (X_{\overline{\FF_p},\text{\it \'{e}t}}, \ZZ_\ell (n))_\cotor)_\div. \]
    However, $H^1 (G_{\FF_p}, H^i_c (X_{\overline{\FF_p},\text{\it \'{e}t}},
      \ZZ_\ell (n))_\cotor)$ is a finitely generated $\ZZ_\ell$-module, and therefore
    its maximal divisible subgroup is trivial. We conclude that
    \[ (H^i_c (X_{\overline{\FF_p},\text{\it \'{e}t}}, \QQ_\ell/\ZZ_\ell (n))^{G_{\FF_p}})_\div =
      ((H^i_c (X_{\overline{\FF_p},\text{\it \'{e}t}}, \QQ_\ell/\ZZ_\ell (n))_\div)^{G_{\FF_p}})_\div = 0. \qedhere \]
  \end{proof}
\end{proposition}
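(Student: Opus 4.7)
The plan is to reduce the statement to the analogous one for a variety over a finite field, where Deligne's theorem on Frobenius weights applies. The guiding intuition is that $\QQ_\ell(n)$ with $n < 0$ has Frobenius eigenvalues divisible by $p^{-n}$, so $1$ cannot appear as an eigenvalue of Frobenius, and from this we extract the claim about the $\QQ_\ell/\ZZ_\ell$ cohomology via a short exact sequence argument.

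First I would spread $X$ out to a flat scheme $\mathcal{X}$ of finite type over $\Spec \ZZ[1/\ell N]$ for some integer $N$, and invoke smooth and proper base change for $\ell$-adic cohomology to find a prime $p \ne \ell$ yielding a Galois-equivariant isomorphism
\[ H^i_c(X_{\overline{\QQ}, \et}, \ZZ_\ell(n)) \cong H^i_c(\mathcal{X}_{\overline{\FF_p}, \et}, \ZZ_\ell(n)) \]
where $G_{\QQ_p}$ acts through its quotient $G_{\QQ_p}/I_p \cong G_{\FF_p}$. Restricting along the decomposition embedding $G_{\QQ_p} \hookrightarrow G_\QQ$ then injects $G_\QQ$-invariants of the left-hand side into $G_{\FF_p}$-invariants of the right-hand side, so it suffices to establish that $(H^i_c(\mathcal{X}_{\overline{\FF_p}, \et}, \QQ_\ell/\ZZ_\ell(n))^{G_{\FF_p}})_\div = 0$.

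The coefficient sequence $0 \to \ZZ_\ell(n) \to \QQ_\ell(n) \to \QQ_\ell/\ZZ_\ell(n) \to 0$ produces, after splitting the long exact sequence by cotorsion and divisible-image factors, a short exact sequence of $G_{\FF_p}$-modules
\[ 0 \to H^i_c(\mathcal{X}_{\overline{\FF_p},\et}, \ZZ_\ell(n))_\cotor \to H^i_c(\mathcal{X}_{\overline{\FF_p},\et}, \QQ_\ell(n)) \to H^i_c(\mathcal{X}_{\overline{\FF_p},\et}, \QQ_\ell/\ZZ_\ell(n))_\div \to 0. \]
By Deligne's integrality theorem for Frobenius eigenvalues on $\ell$-adic compactly supported cohomology of varieties over $\FF_p$, twisting by $n$ rescales the eigenvalues by $p^{-n}$ with $-n > 0$, so $1$ never occurs. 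Hence the middle term has vanishing $G_{\FF_p}$-invariants, and taking $G_{\FF_p}$-invariants of the short exact sequence yields an injection
\[ ((H^i_c(\mathcal{X}_{\overline{\FF_p},\et}, \QQ_\ell/\ZZ_\ell(n))_\div)^{G_{\FF_p}})_\div \hookrightarrow H^1(G_{\FF_p}, H^i_c(\mathcal{X}_{\overline{\FF_p},\et}, \ZZ_\ell(n))_\cotor)_\div. \]
Since $G_{\FF_p} \cong \hat{\ZZ}$ acts continuously on a finitely generated $\ZZ_\ell$-module, the right-hand $H^1$ is itself finitely generated over $\ZZ_\ell$ and thus has trivial maximal divisible subgroup.

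The step I expect to require the most care is the bookkeeping between $((-)^{G_{\FF_p}})_\div$ and $((-)_\div)^{G_{\FF_p}}$: the argument produces the vanishing of the latter's maximal divisible subgroup, whereas the statement concerns the former. The reconciliation is the general identity $(M^{G})_\div = ((M_\div)^{G})_\div$ for any discrete Galois module $M$, which follows from the fact that any divisible subgroup of $M^G$ lies inside $M_\div$ and is $G$-fixed. A secondary concern is the base-change setup: one must verify that a prime of ``good reduction'' exists in a sense adequate for $I_p$ to act trivially on the $\ell$-adic cohomology of $\mathcal{X}_{\overline{\FF_p}}$; for $X$ merely separated and of finite type over $\ZZ$, this demands the usual spreading-out argument inverting finitely many bad primes.
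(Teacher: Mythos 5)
Your proposal is correct and follows essentially the same route as the paper: reduction to a special fiber via a good prime $p \ne \ell$ with trivial inertia action, the coefficient sequence splitting into cotorsion/divisible parts, Deligne's integrality of Frobenius eigenvalues forcing $H^i_c(X_{\overline{\FF}_p,\et},\QQ_\ell(n))^{G_{\FF_p}}=0$ for $n<0$, vanishing of the divisible part of the finitely generated $\ZZ_\ell$-module $H^1(G_{\FF_p},-)$, and the identity $(M^G)_\div=((M_\div)^G)_\div$. The only caveat is that the base-change input should be the generic/constructible base change results (the paper cites SGA~5, Expos\'{e}~VI) rather than smooth--proper base change, since $X$ is neither smooth nor proper --- but you flag the needed spreading-out yourself, so this is a matter of citation rather than substance.
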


\begin{proof}[Proof of Theorem~\ref{theorem-II}]
  By Definition~\ref{dfn:u-infty}, this amounts to showing that the morphism
  $$v_\infty^*\colon R\Gamma_c (X_\et, \QQ/\ZZ (n)) \to R\Gamma_c (G_\RR, X (\CC), \QQ/\ZZ (n))$$
  is torsion. The complexes $R\Gamma_c (X_\et, \QQ/\ZZ (n))$ and
  $R\Gamma_c (G_\RR, X (\CC), \QQ/\ZZ (n))$ are almost of cofinite type by
  Proposition~\ref{prop:motivic-cohomology-duality-consequences} and
  Proposition~\ref{prop:equivariant-coho-of-X(C)} respectively.
  Therefore, according to Lemma~\ref{lemma:torsion-morphisms-in-D(Z)}, to show
  that
  $v^*_\infty\colon R\Gamma_c (X_\et, \QQ/\ZZ (n)) \to R\Gamma_c (G_\RR, X (\CC), \QQ/\ZZ (n))$
  is torsion, it suffices to show that the corresponding morphisms on the
  maximal divisible subgroups
  \[ H^i_c (v^*_\infty)_\div\colon H^i_c (X_\et, \QQ/\ZZ (n))_\div \to
    H^i_c (G_\RR, X (\CC), \QQ/\ZZ (n))_\div \]
  are trivial. The morphism $H^i_c (X_\et, \QQ/\ZZ (n)) \to H^i_c
    (X_{\overline{\QQ}, \text{\it \'{e}t}}, \QQ/\ZZ (n))$, and hence $H^i_c
    (v^*_\infty)$, factors through $H^i_c (X_{\overline{\QQ}, \text{\it \'{e}t}},
    \mu^{\otimes n})^{G_\QQ}$, where $\mu^{\otimes n}$ is the sheaf of all roots of
  unity on $X_{\overline{\QQ}, \text{\it \'{e}t}}$ twisted by $n$. So we have
  \[ \begin{tikzcd}[column sep=0pt]
      H^i_c (X_\et, \QQ/\ZZ (n))_\div\ar{rr}{H^i_c (v^*_\infty)_\div}\ar[dashed]{dr} && H^i_c (G_\RR, X (\CC), \QQ/\ZZ (n))_\div\\
      & \left(H^i_c (X_{\overline{\QQ}, \text{\it \'{e}t}}, \mu^{\otimes n})^{G_\QQ}\right)_\div\ar[dashed]{ur}
    \end{tikzcd} \]
  Now
  \begin{align*}
    \left(H^i_c (X_{\overline{\QQ}, \text{\it \'{e}t}}, \mu^{\otimes n})^{G_\QQ}\right)_\div & \cong
    \left(\bigoplus_\ell H^i_c (X_{\overline{\QQ}, \text{\it \'{e}t}}, \QQ_\ell/\ZZ_\ell (n))^{G_\QQ}\right)_\div \\
                                                                                             & \cong
    \bigoplus_\ell \left(H^i_c (X_{\overline{\QQ}, \text{\it \'{e}t}}, \QQ_\ell/\ZZ_\ell (n))^{G_\QQ}\right)_\div,
  \end{align*}
  where all the summands are trivial by
  Proposition~\ref{prop:l-adic-cohomology-key-lemma}.
\end{proof}


\section{Weil-\'{e}tale complex $R\Gamma_\Wc (X, \ZZ(n))$} \label{sec:RGamma-Wc}

The aim of this section is to construct the Weil-\'{e}tale cohomology complexes
$R\Gamma_\Wc (X, \ZZ(n))$.

\begin{lemma}
  Let $X$ be an arithmetic scheme and $n < 0$. Assume Conjecture
  $\mathbf{L}^c (X_\et, n)$, so that the morphism $\alpha_{X,n}$ exists.
  Then $u_\infty^* \circ \alpha_{X,n} = 0$.

  \[ \begin{tikzcd}
      \RHom (R\Gamma (X, \ZZ^c (n)), \QQ [-2]) \ar{d}[swap]{\alpha_{X,n}}\ar{dr}{= 0} \\
      R\Gamma_c (X_\et, \ZZ (n)) \ar{r}{u_\infty^*} & R\Gamma_c (G_\RR, X (\CC), \ZZ (n))
    \end{tikzcd} \]

  \begin{proof}
    The morphism $\alpha_{X,n}$ is defined on a complex of $\QQ$-vector spaces,
    and $u_\infty^*$ is torsion by Theorem~\ref{theorem-II}.
  \end{proof}
\end{lemma}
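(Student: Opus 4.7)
The plan is to exploit the $\QQ$-linearity of the source. The source of $\alpha_{X,n}$ is $A \dfn \RHom (R\Gamma (X_\et, \ZZ^c (n)), \QQ [-2])$, which is a complex of $\QQ$-vector spaces. Consequently, the abelian group $\Hom_{\DZ} (A, R\Gamma_c (G_\RR, X (\CC), \ZZ (n)))$ inherits a $\QQ$-vector space structure from the right action of the $\QQ$-algebra $\End_{\DZ} (A)$: concretely, multiplication by any nonzero integer $m$ on this Hom-group is induced by multiplication by $m$ on $A$, which is invertible since $A$ consists of $\QQ$-modules.

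Next, I would invoke Theorem~\ref{theorem-II}: there exists a nonzero integer $m$ with $m \cdot u_\infty^* = 0$ in $\DZ$. Composing with $\alpha_{X,n}$ on the right yields
\[ m \cdot (u_\infty^* \circ \alpha_{X,n}) = (m \cdot u_\infty^*) \circ \alpha_{X,n} = 0. \]
Since $u_\infty^* \circ \alpha_{X,n}$ lives in a $\QQ$-vector space by the previous paragraph, killing it by a nonzero integer forces $u_\infty^* \circ \alpha_{X,n} = 0$, which is the desired conclusion.

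There is essentially no obstacle here: the statement is a purely formal consequence of Theorem~\ref{theorem-II} combined with the fact that $\alpha_{X,n}$ factors through a rational complex. The real work has already been done in establishing Theorem~\ref{theorem-II}; this lemma simply packages its content in the form needed for the subsequent construction of $i_\infty^*$ and $R\Gamma_\Wc (X, \ZZ (n))$.
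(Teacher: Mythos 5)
Your argument is correct and is exactly the paper's proof, merely spelled out: the paper's one-line justification (``$\alpha_{X,n}$ is defined on a complex of $\QQ$-vector spaces, and $u_\infty^*$ is torsion'') implicitly uses precisely your observation that $\Hom_\DZ\bigl(\RHom(R\Gamma(X_\et,\ZZ^c(n)),\QQ[-2]),\,R\Gamma_c(G_\RR,X(\CC),\ZZ(n))\bigr)$ is uniquely divisible, so that $m\cdot(u_\infty^*\circ\alpha_{X,n})=0$ forces $u_\infty^*\circ\alpha_{X,n}=0$. No gaps; this matches the paper's reasoning.
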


\begin{definition}
  \label{dfn:i-infty}
  We let
  $i_\infty^*\colon R\Gamma_\fg (X, \ZZ (n)) \to R\Gamma_c (G_\RR, X (\CC), \ZZ (n))$
  be a morphism in $\DZ$ that gives a morphism of distinguished triangles
  \begin{equation}
    \label{eqn:triangle-defining-i-infty}
    \begin{tikzcd}
      \RHom (R\Gamma (X, \ZZ^c (n)), \QQ [-2]) \ar{d}[swap]{\alpha_{X,n}}\ar{r} & 0\ar{d} \\
      R\Gamma_c (X_\et, \ZZ (n)) \ar{r}{u_\infty^*}\ar{d} &  R\Gamma_c (G_\RR, X (\CC), \ZZ (n)) \ar{d}{id} \\
      R\Gamma_\fg (X, \ZZ (n)) \ar[dashed]{r}{i_\infty^*}\ar{d} & R\Gamma_c (G_\RR, X (\CC), \ZZ (n)) \ar{d} \\
      \RHom (R\Gamma (X, \ZZ^c (n)), \QQ [-1])\ar{r} & 0 \\
    \end{tikzcd}
  \end{equation}
\end{definition}

In fact, this makes $i_\infty^*$ independent of any choices.

\begin{proposition}
  \label{prop:uniqueness-of-i-infty}
  There is a unique morphism $i_\infty^*$ that fits in the
  diagram \eqref{eqn:triangle-defining-i-infty}.

  \begin{proof}
    We can apply Corollary~\ref{cor:TR3-TR1-with-uniqueness}, since
    $\RHom (R\Gamma (X, \ZZ^c (n)), \QQ [-2])$ is a complex of $\QQ$-vector
    spaces, and both
    $R\Gamma_\fg (X, \ZZ (n))$ and
    $R\Gamma_c (G_\RR, X (\CC), \ZZ (n))$
    are almost perfect by Proposition~\ref{prop:RGammafg-almost-perfect} and
    Proposition~\ref{prop:equivariant-coho-of-X(C)}.
  \end{proof}
\end{proposition}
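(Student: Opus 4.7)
The plan is to reduce the proposition entirely to Corollary \ref{cor:TR3-TR1-with-uniqueness} from Appendix~\ref{app:homological-algebra}, which presumably packages together a version of axiom TR3 plus a uniqueness clause for morphisms between distinguished triangles in $\DZ$, whenever the relevant Hom-groups from complexes of $\QQ$-vector spaces to almost perfect complexes vanish.

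First I would observe that the outer hypothesis $u_\infty^* \circ \alpha_{X,n} = 0$ is exactly the content of the preceding lemma, so the square on top of \eqref{eqn:triangle-defining-i-infty} commutes and TR3 produces \emph{some} dashed arrow $i_\infty^*$ fitting into the diagram. This gives existence. For uniqueness, the standard argument is: if $i_\infty^*$ and $(i_\infty^*)'$ both fit, then their difference $\delta$ kills the morphism $R\Gamma_c(X_\et,\ZZ(n)) \to R\Gamma_\fg(X,\ZZ(n))$, hence by rotating the triangle defining $R\Gamma_\fg(X,\ZZ(n))$, $\delta$ factors through the connecting map
\[ R\Gamma_\fg(X,\ZZ(n)) \longrightarrow \RHom(R\Gamma(X_\et,\ZZ^c(n)), \QQ[-1]). \]
Thus the uniqueness of $i_\infty^*$ reduces to the vanishing of
\[ \Hom_{\DZ}\!\bigl(\RHom(R\Gamma(X_\et,\ZZ^c(n)),\QQ[-1]),\, R\Gamma_c(G_\RR, X(\CC),\ZZ(n))\bigr). \]

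To verify the hypotheses of Corollary \ref{cor:TR3-TR1-with-uniqueness}, I would cite (a)~that $\RHom(R\Gamma(X_\et,\ZZ^c(n)),\QQ[-\cdot])$ is a complex of $\QQ$-vector spaces (its cohomology is $\Hom_\ZZ(H^{*}(X_\et,\ZZ^c(n)),\QQ)$, a $\QQ$-vector space); (b)~that $R\Gamma_\fg(X,\ZZ(n))$ is almost perfect, by Proposition~\ref{prop:RGammafg-almost-perfect}; and (c)~that $R\Gamma_c(G_\RR, X(\CC),\ZZ(n))$ is almost perfect, by the middle column of Proposition~\ref{prop:equivariant-coho-of-X(C)}. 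The underlying homological input is the vanishing of $\Hom$ and $\Ext^1$ from a $\QQ$-vector space to any finitely generated abelian group, which at the level of cohomology forces any derived-category morphism to be zero.

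The only genuine obstacle is the finite $2$-torsion tail in arbitrarily high degrees present in both $R\Gamma_\fg(X,\ZZ(n))$ and $R\Gamma_c(G_\RR, X(\CC),\ZZ(n))$: one must ensure that unboundedness from above does not allow a nonzero Hom in $\DZ$ from a complex of $\QQ$-vector spaces into such an ``almost perfect'' target. This is precisely the point of introducing the notion of almost perfect complex in Definition~\ref{dfn:almost-of-(co)finite-type} and the corresponding spectral sequence/filtration arguments in the appendix, so the matter is already settled there. Consequently the proof is essentially a one-line invocation of Corollary~\ref{cor:TR3-TR1-with-uniqueness}.
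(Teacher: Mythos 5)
Your overall route is the paper's: existence of $i_\infty^*$ comes from (TR3) applied to the square made commutative by $u_\infty^*\circ\alpha_{X,n}=0$ (the preceding lemma), and uniqueness is delegated to Corollary~\ref{cor:TR3-TR1-with-uniqueness}, whose hypotheses you check exactly as the paper does: the source $\RHom(R\Gamma(X_\et,\ZZ^c(n)),\QQ[-2])$ is a complex of $\QQ$-vector spaces, and both $R\Gamma_\fg(X,\ZZ(n))$ and $R\Gamma_c(G_\RR,X(\CC),\ZZ(n))$ are almost perfect by Proposition~\ref{prop:RGammafg-almost-perfect} and Proposition~\ref{prop:equivariant-coho-of-X(C)}. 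As an invocation of the corollary, this is the same proof.

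However, the mechanism you offer for \emph{why} the corollary works is wrong, and an argument along the lines you sketch would fail. You reduce uniqueness to the vanishing of $\Hom_\DZ\bigl(\RHom(R\Gamma(X_\et,\ZZ^c(n)),\QQ[-1]),\,R\Gamma_c(G_\RR,X(\CC),\ZZ(n))\bigr)$, justified by ``the vanishing of $\Hom$ and $\Ext^1$ from a $\QQ$-vector space to any finitely generated abelian group.'' But $\Ext^1_\ZZ(\QQ,\ZZ)\neq 0$ (it is a large divisible group; only $\Hom(\QQ,\ZZ)$ and $\Ext^1(\QQ,\text{finite})$ vanish), so by the decomposition \eqref{eqn:morphisms-in-D(Z)} the Hom-group above is in general nonzero: it contains factors $\Ext(H^i,H^{i-1})$ with $H^i$ a $\QQ$-vector space and $H^{i-1}$ containing free summands of the finitely generated cohomology of $R\Gamma_c(G_\RR,X(\CC),\ZZ(n))$. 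The actual content of Lemma~\ref{lemma:morphisms-inDAb-not-divisible} and Corollary~\ref{cor:TR3-TR1-with-uniqueness} is different: that Hom-group is \emph{divisible}, while $\Hom_\DZ(R\Gamma_\fg(X,\ZZ(n)),R\Gamma_c(G_\RR,X(\CC),\ZZ(n)))$, a Hom between almost perfect complexes, has no nontrivial divisible subgroups (each factor is finitely generated plus $2$-torsion). Hence the map $w^*$ induced by the connecting morphism is zero --- it is the vanishing of this \emph{map}, not of its source, that kills the difference of two candidate $i_\infty^*$'s. With that correction, your factoring of the difference through $R\Gamma_\fg(X,\ZZ(n))\to\RHom(R\Gamma(X_\et,\ZZ^c(n)),\QQ[-1])$ is precisely the proof of Lemma~\ref{lemma:TR3-TR1-with-uniqueness-general-statement}, and the rest coincides with the paper.
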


\begin{proposition}
  \label{i-infty-is-torsion}
  The morphism $i_\infty^*$ is torsion.

  \begin{proof}
    Let us examine the morphism of distinguished triangles
    \eqref{eqn:triangle-defining-i-infty} that defines $i_\infty^*$; in
    particular, the commutative diagram
    \[ \begin{tikzcd}
        R\Gamma_c (X_\et, \ZZ (n)) \ar{r}\ar{d}[swap]{u_\infty^*} & R\Gamma_\fg (X, \ZZ (n))\ar{dl}{i_\infty^*} \\
        R\Gamma_c (G_\RR, X (\CC), \ZZ (n))
      \end{tikzcd} \]

    According to Corollary~\ref{cor:TR3-TR1-with-uniqueness}, the morphism
    \begin{multline*}
      \Hom_\DZ (R\Gamma_\fg (X,\ZZ (n)), R\Gamma_c (G_\RR, X (\CC), \ZZ (n))) \\
      \to
      \Hom_\DZ (R\Gamma_c (X_\et, \ZZ (n)), R\Gamma_c (G_\RR, X (\CC), \ZZ (n)))
    \end{multline*}
    induced by the composition with
    $R\Gamma_c (X_\et, \ZZ (n)) \to R\Gamma_\fg (X,\ZZ (n))$, is a monomorphism, and
    therefore
    \begin{multline*}
      \Hom_\DZ (R\Gamma_\fg (X,\ZZ (n)), R\Gamma_c (G_\RR, X (\CC), \ZZ (n)))\otimes \QQ \to\\
      \Hom_\DZ (R\Gamma_c (X_\et, \ZZ (n)), R\Gamma_c (G_\RR, X (\CC), \ZZ (n)))\otimes \QQ
    \end{multline*}
    is also a monomorphism. However, $u_\infty^*\otimes \QQ = 0$ by
    Theorem~\ref{theorem-II}, and this implies that $i_\infty^*\otimes \QQ = 0$.
  \end{proof}
\end{proposition}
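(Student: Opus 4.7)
The plan is to exploit the factorization $u_\infty^* = i_\infty^* \circ f$ that falls out of the commutative diagram \eqref{eqn:triangle-defining-i-infty} defining $i_\infty^*$, where $f\colon R\Gamma_c (X_\et, \ZZ(n)) \to R\Gamma_\fg (X, \ZZ(n))$ is the canonical morphism appearing in the distinguished triangle defining $R\Gamma_\fg (X, \ZZ(n))$. By Theorem~\ref{theorem-II}, $u_\infty^*$ is torsion, i.e.\ $u_\infty^* \otimes 1 = 0$ in $\Hom_\DZ (R\Gamma_c (X_\et, \ZZ (n)), R\Gamma_c (G_\RR, X (\CC), \ZZ(n))) \otimes \QQ$. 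The goal will be to transfer this vanishing along $f$ to deduce that $i_\infty^* \otimes 1 = 0$, which is precisely the statement that $i_\infty^*$ is torsion.

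The central step is to show that precomposition with $f$ is injective on the relevant Hom-groups. For this I would apply the contravariant functor $\Hom_\DZ (-, R\Gamma_c (G_\RR, X (\CC), \ZZ(n)))$ to the defining triangle
\[ \RHom (R\Gamma (X_\et, \ZZ^c (n)), \QQ [-2]) \to R\Gamma_c (X_\et, \ZZ(n)) \xrightarrow{f} R\Gamma_\fg (X, \ZZ(n)) \to \RHom (R\Gamma (X_\et, \ZZ^c (n)), \QQ [-1]) \]
and obtain a long exact sequence. The cofiber of $f$ (up to shift) is a bounded complex of $\QQ$-vector spaces, while the target $R\Gamma_c (G_\RR, X (\CC), \ZZ(n))$ is almost perfect by Proposition~\ref{prop:equivariant-coho-of-X(C)}. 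Since any homomorphism from a divisible abelian group into a finitely generated one (or finite $2$-torsion one) is zero, the appendix result Corollary~\ref{cor:TR3-TR1-with-uniqueness} gives the vanishing of $\Hom_\DZ$ from shifts of $\RHom (R\Gamma (X_\et, \ZZ^c (n)), \QQ [*])$ into $R\Gamma_c (G_\RR, X (\CC), \ZZ(n))$. The long exact sequence then forces $(-) \circ f$ to be injective.

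Finally, since tensoring with $\QQ$ is exact on abelian groups, the injection survives to give an injective map
\[ \Hom_\DZ (R\Gamma_\fg (X, \ZZ(n)), R\Gamma_c (G_\RR, X (\CC), \ZZ(n))) \otimes \QQ \hookrightarrow \Hom_\DZ (R\Gamma_c (X_\et, \ZZ(n)), R\Gamma_c (G_\RR, X (\CC), \ZZ(n))) \otimes \QQ, \]
under which $i_\infty^* \otimes 1$ maps to $u_\infty^* \otimes 1 = 0$; hence $i_\infty^* \otimes 1 = 0$, which is the torsion statement. The main (mild) obstacle is the injectivity step: one has to invoke the appendix corollary carefully, paying attention to the fact that the almost perfect target may carry finite $2$-torsion in arbitrarily high degrees, which is exactly the situation Corollary~\ref{cor:TR3-TR1-with-uniqueness} is designed to accommodate. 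The rest of the argument is formal.
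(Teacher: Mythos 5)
Your overall route is the same as the paper's: factor $u_\infty^* = i_\infty^* \circ f$ from the diagram defining $i_\infty^*$, show that precomposition with $f$ is injective on $\Hom_\DZ(-, R\Gamma_c (G_\RR, X (\CC), \ZZ(n)))$, tensor with $\QQ$ (exact), and conclude $i_\infty^* \otimes \QQ = 0$ from $u_\infty^* \otimes \QQ = 0$ of Theorem~\ref{theorem-II}. That is exactly the published argument.

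However, your justification of the injectivity step contains a false intermediate claim: you assert that $\Hom_\DZ\bigl(\RHom (R\Gamma (X_\et, \ZZ^c (n)), \QQ [\ast]),\, R\Gamma_c (G_\RR, X (\CC), \ZZ(n))\bigr)$ vanishes, and that Corollary~\ref{cor:TR3-TR1-with-uniqueness} says so. It does not, and the vanishing is false in general: by the decomposition \eqref{eqn:morphisms-in-D(Z)} only the $\Hom (H^i, H^i)$ components die (divisible source into finitely generated or finite $2$-torsion target), while the components $\Ext (H^i (A^\bullet), H^{i-1} (C^\bullet))$ are sums of copies of $\Ext (\QQ, \ZZ)$, which is a nonzero (huge, divisible) group. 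The correct argument — and the one encoded in Lemma~\ref{lemma:morphisms-inDAb-not-divisible} and the proof of Lemma~\ref{lemma:TR3-TR1-with-uniqueness-general-statement} — is that this Hom-group is \emph{divisible}, whereas $\Hom_\DZ (R\Gamma_\fg (X,\ZZ(n)), R\Gamma_c (G_\RR, X (\CC), \ZZ(n)))$ has no nontrivial divisible subgroups (both complexes being almost perfect); since the image of a divisible group is divisible, the connecting map in your long exact sequence is zero, and precomposition with $f$ is mono. With the vanishing claim replaced by this divisibility argument (which is precisely what the appendix corollary delivers), your proof is complete and coincides with the paper's.
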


We are now ready to define the Weil-\'{e}tale complexes.

\begin{definition}
  \label{dfn:RGammaWc}
  We let
  $R\Gamma_\Wc (X,\ZZ(n))$ be an object in the derived category
  $\DZ$ which is a mapping fiber of $i_\infty^*$:
  \[ R\Gamma_\Wc (X,\ZZ(n)) \to
    R\Gamma_\fg (X, \ZZ (n)) \xrightarrow{i_\infty^*}
    R\Gamma_c (G_\RR, X (\CC), \ZZ (n)) \to
    R\Gamma_\Wc (X,\ZZ(n)) [1]. \]
  The \textbf{Weil-\'{e}tale cohomology with compact support} is given by
  $$H_\Wc^i (X, \ZZ (n)) \dfn H^i (R\Gamma_\Wc (X,\ZZ(n))).$$
\end{definition}

\begin{remark}
  Note that this defines $R\Gamma_\Wc (X,\ZZ(n))$ up to a non-unique isomorphism
  in $\DZ$, and the groups $H_\Wc^i (X, \ZZ (n))$ are also defined
  up to a non-unique isomorphism. In a continuation of this paper we will make
  use of the determinant $\det\nolimits_\ZZ R\Gamma_\Wc (X,\ZZ(n))$ in the
  sense of \cite{Knudsen-Mumford-1976}, which will be defined up to a canonical
  isomorphism.

  However, we recall from Proposition~\ref{prop:RGamma-fg-well-defined} that
  $R\Gamma_\fg (X, \ZZ (n))$ is defined up to a unique isomorphism in the derived
  category $\DZ$. If we could define $i_\infty^*\colon R\Gamma_\fg (X, \ZZ(n))
    \to R\Gamma_c (G_\RR, X(\CC), \ZZ(n))$ as an explicit, genuine morphism of
  complexes (not just as a morphism in the derived category $\DZ$), this would
  give us a canonical and functorial definition for $R\Gamma_\Wc (X, \ZZ(n))$.
\end{remark}

\subsection*{Case of varieties over finite fields}

For varieties over finite fields, our Weil-\'{e}tale cohomology has a simple
description, and it is $\QQ/\ZZ$-dual to the arithmetic homology studied by
Geisser in \cite{Geisser-2010-arithmetic-homology}.

\begin{proposition}
  If $X$ is a variety over a finite field $\FF_q$ and $n < 0$, then assuming
  Conjecture $\mathbf{L}^c (X,n)$, there is an isomorphism of complexes
  \begin{equation}
    \label{eqn:RGamma-Wc-over-finite-fields}
    R\Gamma_\Wc (X,\ZZ(n)) \cong \RHom (R\Gamma (X_\et, \ZZ^c (n)), \ZZ [-1]),
  \end{equation}
  and an isomorphism of finite groups
  \begin{align*}
    H^i_{W,c} (X, \ZZ (n)) & \cong
    \Hom (H^{2-i} (X_\et, \ZZ^c (n)), \QQ/\ZZ) \\
                           & \cong
    H^i_c (X_\et, \ZZ(n))                      \\
                           & \cong
    \Hom (H_{i-1}^c (X_\ar, \ZZ (n)), \QQ/\ZZ),
  \end{align*}
  where $H_\bullet^c (X_\ar, \ZZ (n))$ are the arithmetic homology groups
  defined in {\rm \cite[\S 3]{Geisser-2010-arithmetic-homology}}.

  \begin{proof}
    Under our assumptions, $X (\CC) = \emptyset$, and therefore
    $R\Gamma_c (G_\RR, X (\CC), \ZZ (n)) = 0$, so that
    $R\Gamma_\Wc (X,\ZZ(n)) \cong R\Gamma_\fg (X, \ZZ (n))$. Finally, by
    Proposition~\ref{prop:RGamma-fg-for-X(R)-empty}, we have an isomorphism
    $R\Gamma_\fg (X, \ZZ (n)) \cong \RHom (R\Gamma (X_\et, \ZZ^c (n)), \ZZ [-1])$.

    To relate this to Geisser's arithmetic homology, according to
    \cite[Theorem~3.1]{Geisser-2010-arithmetic-homology}, there is a long exact
    sequence
    \[ \cdots \to H_{i-1}^c (X_\et, \ZZ (n)) \to
      H_i^c (X_\ar, \ZZ (n)) \to CH_n (X, i-2n)_\QQ \to
      H_{i-2}^c (X_\et, \ZZ (n)) \to \cdots \]
    Here the homological notation means that
    \begin{align*}
      H_i^c (X_\et, \ZZ (n)) & = H^{-i} (X_\et, \ZZ^c (n)),  \\
      CH_n (X, i-2n)_\QQ     & = H_i^c (X_\et, \QQ (n)) = 0,
    \end{align*}
    where the rational vanishing uses finiteness of $H^i (X_\et, \ZZ^c(n))$
    for $X$ over a finite field and $n < 0$,
    assuming $\mathbf{L}^c (X_\et,n)$
    (Proposition~\ref{prop:motivic-cohomology-duality-consequences}).

    Therefore, $$H_i^c (X_\ar, \ZZ (n)) \cong H^{1-i} (X_\et, \ZZ^c (n)).$$

    Now \eqref{eqn:RGamma-Wc-over-finite-fields} gives
    \[
      E_2^{p,q} = \Ext_\ZZ^p (H^{1-q} (X_\et, \ZZ^c (n)), \ZZ)
      \Longrightarrow H^{p+q}_{W,c} (X, \ZZ (n)),
    \]
    and again, by finiteness of $H^{1-q} (X_\et, \ZZ^c (n))$ under our assumptions,
    this spectral sequence is concentrated in $p = 1$, where
    \[ \Ext_\ZZ^1 (H^{1-q} (X_\et, \ZZ^c (n)), \ZZ) \cong
      \Hom (H^{1-q} (X_\et, \ZZ^c (n)), \QQ/\ZZ), \]
    so that
    \[ H^{1+i}_{W,c} (X, \ZZ (n)) \cong
      \Hom (H^{1-i} (X_\et, \ZZ^c (n)), \QQ/\ZZ) \cong
      \Hom (H_i^c (X_\ar, \ZZ (n)), \QQ/\ZZ). \qedhere \]
  \end{proof}
\end{proposition}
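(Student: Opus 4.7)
The plan is to leverage the fact that a variety over $\FF_q$ has no complex points, which kills the archimedean factor in the definition of $R\Gamma_\Wc$, reducing everything to $R\Gamma_\fg$, which in turn has a very clean description since $X(\RR) = \emptyset$.

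First I would note that because $X$ is a variety over $\FF_q$, the structure morphism $X \to \Spec\ZZ$ factors through $\Spec\FF_q$, so $X(\CC) = \Hom_\ZZ(\Spec\CC, X) = \emptyset$, and in particular $X(\RR) = \emptyset$. Consequently $R\Gamma_c(G_\RR, X(\CC), \ZZ(n)) \cong 0$, and Definition~\ref{dfn:RGammaWc} reads $R\Gamma_\Wc(X,\ZZ(n)) \cong R\Gamma_\fg(X, \ZZ(n))$. Applying Proposition~\ref{prop:RGamma-fg-for-X(R)-empty} then yields the first claimed isomorphism $R\Gamma_\Wc(X,\ZZ(n)) \cong \RHom(R\Gamma(X_\et, \ZZ^c(n)), \ZZ[-1])$.

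Next I would compute the cohomology by the Ext spectral sequence
$$E_2^{p,q} = \Ext^p_\ZZ(H^{1-q}(X_\et, \ZZ^c(n)), \ZZ) \Longrightarrow H^{p+q}_\Wc(X, \ZZ(n)).$$
By Proposition~\ref{prop:motivic-cohomology-duality-consequences}(3), the hypothesis $\mathbf{L}^c(X,n)$ forces each $H^i(X_\et, \ZZ^c(n))$ to be finite, so the spectral sequence is concentrated on the column $p=1$, where $\Ext^1_\ZZ(A, \ZZ) \cong \Hom(A, \QQ/\ZZ)$ for a finite abelian group $A$. This gives the identification $H^i_\Wc(X, \ZZ(n)) \cong \Hom(H^{2-i}(X_\et, \ZZ^c(n)), \QQ/\ZZ)$. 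To then equate this with $H^i_c(X_\et, \ZZ(n))$, I would invoke Theorem~\ref{theorem-I} together with the isomorphism $\widehat{H}^i_c(X_\et, \ZZ(n)) \cong H^i_c(X_\et, \ZZ(n))$ from Lemma~\ref{lemma:morphism-hat-Hc(Xet,Z(n))->Hc(Xet,Z(n))}(2) (using $X(\RR) = \emptyset$).

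For the final identification with Geisser's arithmetic homology, I would quote the long exact sequence of \cite[Theorem~3.1]{Geisser-2010-arithmetic-homology} relating $H^c_\bullet(X_\et, \ZZ(n))$, $H^c_\bullet(X_\ar, \ZZ(n))$ and $CH_n(X, \bullet-2n)_\QQ$. Translating homological to cohomological indexing, $H_i^c(X_\et, \ZZ(n)) = H^{-i}(X_\et, \ZZ^c(n))$ is finite under our hypothesis, so $CH_n(X, i-2n)_\QQ = H_i^c(X_\et, \QQ(n))$ vanishes (finite groups are killed by tensoring with $\QQ$), collapsing the sequence to $H_i^c(X_\ar, \ZZ(n)) \cong H^{1-i}(X_\et, \ZZ^c(n))$. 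Composition with the Pontryagin duality from the previous step produces the last isomorphism. No step here is a genuine obstacle: all the input (duality theorem, finiteness under $\mathbf{L}^c$, cleanness of $R\Gamma_\fg$ when $X(\RR) = \emptyset$) has already been established; the subtlest point is only the bookkeeping verification that Geisser's rational term $CH_n(X, i-2n)_\QQ$ vanishes under our finiteness assumption.
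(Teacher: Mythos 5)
Your argument matches the paper's proof step for step: kill the archimedean term via $X(\CC)=\emptyset$, invoke Proposition~\ref{prop:RGamma-fg-for-X(R)-empty} to identify $R\Gamma_\fg$ with $\RHom(R\Gamma(X_\et,\ZZ^c(n)),\ZZ[-1])$, run the $\Ext$ spectral sequence (concentrated in $p=1$ by finiteness), and then collapse Geisser's long exact sequence because the rational term dies. The only addition you make beyond the paper's text is spelling out that $\Hom(H^{2-i}(X_\et,\ZZ^c(n)),\QQ/\ZZ)\cong H^i_c(X_\et,\ZZ(n))$ via Theorem~\ref{theorem-I} and Lemma~\ref{lemma:morphism-hat-Hc(Xet,Z(n))->Hc(Xet,Z(n))}, which is implicit in the paper; this is correct.
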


\subsection*{Perfectness of the complex}

Our next aim is to verify that $R\Gamma_\Wc (X, \ZZ(n))$ is a perfect complex.
From now on we tacitly assume Conjecture $\mathbf{L}^c (X_\et,n)$. As always,
$n < 0$.

\begin{lemma}
  The groups $H^i_\Wc (X, \ZZ(n))$ are finitely generated for all $i \in \ZZ$.

  \begin{proof}
    In the long exact sequence
    \begin{multline*}
      \cdots \to H^{i-1}_c (G_\RR, X (\CC), \ZZ (n)) \to
      H^i_\Wc (X,\ZZ(n)) \to
      H^i_\fg (X,\ZZ(n)) \\
      \xrightarrow{H^i (i_\infty^*)}
      H^i_c (G_\RR, X (\CC), \ZZ (n)) \to \cdots
    \end{multline*}
    the groups $H^i_c (G_\RR, X (\CC), \ZZ (n))$ and $H^i_\fg (X, \ZZ(n))$ are
    finitely generated by
    Proposition~\ref{prop:equivariant-coho-of-X(C)}, and
    Proposition~\ref{prop:RGammafg-almost-perfect}, respectively.
    This implies the finite generation of $H^i_\Wc (X, \ZZ(n))$.
  \end{proof}
\end{lemma}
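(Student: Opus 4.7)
My plan is to read off the finite generation directly from the defining distinguished triangle
\[ R\Gamma_\Wc (X,\ZZ(n)) \to R\Gamma_\fg (X, \ZZ (n)) \xrightarrow{i_\infty^*} R\Gamma_c (G_\RR, X (\CC), \ZZ (n)) \to R\Gamma_\Wc (X,\ZZ(n))[1] \]
of Definition~\ref{dfn:RGammaWc}. Taking cohomology produces the long exact sequence
\begin{multline*}
  \cdots \to H^{i-1}_c (G_\RR, X (\CC), \ZZ (n)) \to H^i_\Wc (X,\ZZ(n)) \to H^i_\fg (X,\ZZ(n)) \\
  \xrightarrow{H^i (i_\infty^*)} H^i_c (G_\RR, X (\CC), \ZZ (n)) \to \cdots
\end{multline*}
so $H^i_\Wc (X, \ZZ(n))$ fits in a short exact sequence whose outer terms are a quotient of $H^{i-1}_c (G_\RR, X (\CC), \ZZ (n))$ and a subgroup of $H^i_\fg (X, \ZZ(n))$.

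It therefore suffices to invoke the two finite generation statements already established in the paper. By Proposition~\ref{prop:equivariant-coho-of-X(C)}, the complex $R\Gamma_c (G_\RR, X (\CC), \ZZ(n))$ is almost perfect, so in particular each $H^i_c (G_\RR, X (\CC), \ZZ(n))$ is finitely generated. By Proposition~\ref{prop:RGammafg-almost-perfect} (which uses Conjecture $\mathbf{L}^c (X_\et,n)$, implicitly in force here), the complex $R\Gamma_\fg (X, \ZZ(n))$ is almost perfect, so every $H^i_\fg (X, \ZZ(n))$ is finitely generated as well.

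Since subgroups and quotients of finitely generated abelian groups are finitely generated, and since an extension of two finitely generated groups is finitely generated, the long exact sequence forces $H^i_\Wc (X, \ZZ(n))$ to be finitely generated for every $i \in \ZZ$. There is no real obstacle here: the argument is a purely formal consequence of the two almost-perfectness statements and the defining triangle, and the $2$-torsion tails in high degree cause no trouble because they are in particular finitely generated. The only point to flag is that this lemma only gives finite generation degree by degree; the boundedness needed to upgrade $R\Gamma_\Wc (X, \ZZ(n))$ from almost perfect to perfect will have to be handled separately, presumably by showing that the $2$-torsion tails of $R\Gamma_\fg$ and $R\Gamma_c (G_\RR, X(\CC), \ZZ(n))$ cancel out under $i_\infty^*$ in high degrees.
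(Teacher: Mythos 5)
Your argument is correct and coincides with the paper's own proof: both take the long exact cohomology sequence of the defining triangle for $R\Gamma_\Wc (X,\ZZ(n))$ and invoke Proposition~\ref{prop:equivariant-coho-of-X(C)} and Proposition~\ref{prop:RGammafg-almost-perfect} for the finite generation of the neighboring terms. The closing remark about handling boundedness separately is also consistent with how the paper proceeds.
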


\begin{lemma}
  One has $H^i_\Wc (X,\ZZ(n)) = 0$ for $i < 0$.

  \begin{proof}
    The definitions of $R\Gamma_\fg (X, \ZZ(n))$ and $R\Gamma_\Wc (X, \ZZ(n))$
    yield exact sequences
    \[ \begin{tikzcd}[row sep=1em,column sep=1em]
        &[-1em] H^{i-1}_c (G_\RR, X (\CC), \ZZ (n)) \ar{d} &[-1em] \\
        & H^i_\Wc (X, \ZZ(n)) \ar{d} \\
        H^i_c (X_\et, \ZZ(n)) \ar{r} & H^i_\fg (X, \ZZ(n)) \ar{r}\ar{d} & \Hom (H^{1-i} (X_\et, \ZZ^c (n)), \QQ) \ar{r} & H^{i+1}_c (X_\et, \ZZ(n)) \\
        & H^i_c (G_\RR, X (\CC), \ZZ (n))
      \end{tikzcd} \]
    If $i < 0$, then $H^i_c (X_\et, \ZZ(n)) = H^i_c (G_\RR, X (\CC), \ZZ (n)) = 0$.
    Moreover, $\Hom (H^{1-i} (X_\et, \ZZ^c (n)), \QQ) = 0$ for $i < 0$, since
    $H^{1-i} (X_\et, \ZZ^c (n))$ is finite $2$-torsion
    (Proposition~\ref{prop:motivic-cohomology-duality-consequences}). We conclude
    that $H^i_\Wc (X, \ZZ(n)) = H^i_\fg (X, \ZZ(n)) = 0$ for $i < 0$.
  \end{proof}
\end{lemma}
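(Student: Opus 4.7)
The plan is to combine the two long exact sequences coming from the defining triangles of $R\Gamma_\fg(X,\ZZ(n))$ and $R\Gamma_\Wc(X,\ZZ(n))$, and to argue that every group on either side vanishes when $i<0$. Concretely, the triangle in Definition~\ref{dfn:RGammaWc} produces
\[ \cdots \to H^{i-1}_c(G_\RR, X(\CC), \ZZ(n)) \to H^i_\Wc(X,\ZZ(n)) \to H^i_\fg(X,\ZZ(n)) \xrightarrow{H^i(i_\infty^*)} H^i_c(G_\RR, X(\CC), \ZZ(n)) \to \cdots \]
so it suffices to show $H^i_\fg(X,\ZZ(n)) = 0$ and $H^{i-1}_c(G_\RR, X(\CC), \ZZ(n)) = 0$ for $i<0$.

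For the second vanishing, recall that $R\Gamma_c(X(\CC), \ZZ(n))$ is concentrated in non-negative degrees (singular cohomology with compact support of a space), and taking $G_\RR$-invariants via $R\Gamma(G_\RR, -)$ preserves this vanishing range; hence $H^j_c(G_\RR, X(\CC), \ZZ(n)) = 0$ for $j<0$, which covers $j = i-1$ when $i<0$.

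For the first vanishing, I would apply the long exact sequence associated with the triangle defining $R\Gamma_\fg(X,\ZZ(n))$ in Definition~\ref{def:RGamma-fg}:
\[ \cdots \to H^i_c(X_\et, \ZZ(n)) \to H^i_\fg(X, \ZZ(n)) \to \Hom(H^{1-i}(X_\et, \ZZ^c(n)), \QQ) \to H^{i+1}_c(X_\et, \ZZ(n)) \to \cdots \]
By Proposition~\ref{prop:motivic-cohomology-duality-consequences}, $H^i_c(X_\et, \ZZ(n)) = 0$ for $i<1$, so the left term vanishes when $i<0$. For the right term, when $i<0$ one has $1-i \ge 2 > 1$, and Proposition~\ref{prop:motivic-cohomology-duality-consequences} asserts that $H^{1-i}(X_\et, \ZZ^c(n))$ is finite $2$-torsion in that range, whence $\Hom(H^{1-i}(X_\et, \ZZ^c(n)), \QQ) = 0$. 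Therefore $H^i_\fg(X, \ZZ(n)) = 0$ for $i<0$, and combining with the previous paragraph concludes the proof.

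There is no real obstacle here; the argument is a straightforward bookkeeping in two long exact sequences, using only the vanishing ranges already tabulated in Proposition~\ref{prop:motivic-cohomology-duality-consequences} and Proposition~\ref{prop:equivariant-coho-of-X(C)}. The only mild subtlety is remembering that, because $\ZZ(n)$ is defined as $\QQ/\ZZ(n)[-1]$, the complex $R\Gamma_c(X_\et, \ZZ(n))$ lives in degrees $\ge 1$ rather than $\ge 0$, which is exactly what allows $H^0_c(X_\et, \ZZ(n))$ to vanish and hence lets the argument succeed down to the boundary case $i = 0$.
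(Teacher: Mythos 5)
Your argument is correct and is essentially the paper's own proof: the same two long exact sequences (from the triangles defining $R\Gamma_\fg(X,\ZZ(n))$ and $R\Gamma_\Wc(X,\ZZ(n))$), together with $H^i_c(X_\et,\ZZ(n))=0$ for $i<1$, the vanishing of $\Hom(H^{1-i}(X_\et,\ZZ^c(n)),\QQ)$ for $i<0$ because that group is finite $2$-torsion, and the vanishing of $H^j_c(G_\RR,X(\CC),\ZZ(n))$ in negative degrees (which you, unlike the paper, justify explicitly via the non-negative concentration of $R\Gamma_c(X(\CC),\ZZ(n))$ and of group cohomology). Only your closing remark is slightly off: for $i<0$ the vanishing of $H^0_c(X_\et,\ZZ(n))$ is never used, and the argument does not extend to $i=0$ in any case, since $\Hom(H^{1}(X_\et,\ZZ^c(n)),\QQ)$ need not vanish there.
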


For the vanishing of $H^i_\Wc (X, \ZZ(n))$ for $i \gg 0$, we first establish
the following auxiliary result.

\begin{lemma}
  Let $d = \dim X$. For each prime $\ell$ and $i \ge 2d$ we have
  \begin{equation}
    \label{eqn:l-adic-completion-of-H-Wc}
    H^i_\Wc (X, \ZZ (n)) \otimes \ZZ_\ell =
    \widehat{H}_c^i (X [1/\ell]_\et, \ZZ_\ell (n)),
  \end{equation}
  where the right-hand side is defined via
  $\varprojlim_r \widehat{H}_c^i (X [1/\ell]_\et, \mu_{\ell^r}^{\otimes n})$.

  \begin{proof}
    Consider the commutative diagram with distinguished rows and columns
    \[ \begin{tikzcd}[font=\footnotesize]
        {[R\Gamma (X_\et, \ZZ^c(n)), \QQ[-2]]} \ar{r}{\widehat{\alpha}_{X,n}} \ar{d}{id} & R\widehat{\Gamma}_c (X_\et, \ZZ(n)) \ar{d} \ar{r} & R\widehat{\Gamma}_\fg (X, \ZZ(n)) \ar{r}\ar{d} & {[+1]} \ar{d}{id} \\
        {[R\Gamma (X_\et, \ZZ^c(n)), \QQ[-2]]} \ar{r}{\alpha_{X,n}} \ar{d} & R\Gamma_c (X_\et, \ZZ(n)) \ar{r} \ar{d}{\widehat{u}^*_\infty} & R\Gamma_\fg (X, \ZZ(n)) \ar{r} \ar{d}{\widehat{i}^*_\infty} & {[+1]} \ar{d} \\
        0 \ar{r}\ar{d} & R\widehat{\Gamma}_c (G_\RR, X(\CC), \ZZ(n)) \ar{r}{id} \ar{d} & R\widehat{\Gamma}_c (G_\RR, X(\CC), \ZZ(n)) \ar{r} \ar{d} & 0 \ar{d} \\
        {[R\Gamma (X_\et, \ZZ^c(n)), \QQ[-1]]} \ar{r}{\widehat{\alpha}_{X,n} [1]} & R\widehat{\Gamma}_c (X_\et, \ZZ(n)) [1] \ar{r} & R\widehat{\Gamma}_\fg (X, \ZZ(n)) [1] \ar{r} & {[+2]}
      \end{tikzcd} \]
    Here $\widehat{u}^*_\infty$ (resp. $\widehat{i}^*_\infty$) is defined as the
    composition of the canonical morphism $u^*_\infty$ (resp. $i^*_\infty$) with
    the projection to the Tate cohomology
    \[ \pi\colon R\Gamma_c (G_\RR, X(\CC), \ZZ(n)) \to
      R\widehat{\Gamma}_c (G_\RR, X(\CC), \ZZ(n)). \]
    By Proposition~\ref{prop:equivariant-coho-of-X(C)}, $H^i (\pi)$ is an
    isomorphism for $i \ge 2d-1$. Therefore, the five-lemma applied to
    \[ \begin{tikzcd}
        R\Gamma_\Wc (X, \ZZ(n)) \ar{r}\ar{d}{f} & R\Gamma_\fg (X, \ZZ(n)) \ar{r}{i^*_\infty}\ar{d}{id} & R\Gamma_c (G_\RR, X(\CC), \ZZ(n)) \ar{r}\ar{d}{\pi} & {[+1]}\ar{d}{f [1]} \\
        R\widehat{\Gamma}_\fg (X, \ZZ(n)) \ar{r} & R\Gamma_\fg (X, \ZZ(n)) \ar{r}{\widehat{i}^*_\infty} & R\widehat{\Gamma}_c (G_\RR, X(\CC), \ZZ(n)) \ar{r} & {[+1]}
      \end{tikzcd} \]
    shows that for $i \ge 2d$ holds
    \[ H^i_\Wc (X, \ZZ(n)) \cong \widehat{H}^i_\fg (X, \ZZ(n)). \]
    As in Corollary~\ref{cor:RGamma-fg-model-for-l-adic-cohomology}, we have for a
    prime $\ell$
    \[ \widehat{H}^i_\fg (X, \ZZ(n)) \otimes \ZZ_\ell \cong \widehat{H}^i_c (X[1/\ell]_\et, \ZZ_\ell (n)). \qedhere \]
  \end{proof}
\end{lemma}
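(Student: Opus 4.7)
The plan is to introduce a ``hat'' analog of the entire construction built from the modified \'etale cohomology $R\widehat{\Gamma}_c(X_\et,\ZZ(n))$ and from Tate equivariant cohomology $R\widehat{\Gamma}_c(G_\RR, X(\CC), \ZZ(n))$ in place of their usual counterparts. Concretely, I would define $R\widehat{\Gamma}_\fg(X, \ZZ(n))$ as the cone of the morphism
\[ \widehat{\alpha}_{X,n}\colon \RHom(R\Gamma(X_\et, \ZZ^c(n)), \QQ[-2]) \to R\widehat{\Gamma}_c(X_\et, \ZZ(n)) \]
obtained by omitting the final projection $R\widehat{\Gamma}_c \to R\Gamma_c$ from Definition~\ref{def:RGamma-fg}, and lift $i_\infty^*$ to a morphism $\widehat{i}_\infty^*\colon R\Gamma_\fg(X, \ZZ(n)) \to R\widehat{\Gamma}_c(G_\RR, X(\CC), \ZZ(n))$ by composing with the canonical projection $\pi\colon R\Gamma_c(G_\RR, X(\CC), \ZZ(n)) \to R\widehat{\Gamma}_c(G_\RR, X(\CC), \ZZ(n))$. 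These assemble into a large commutative grid juxtaposing the hat and unhat versions of the defining triangles.

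The crucial input is the isomorphism \eqref{eqn:Tate-vs-normal-cohomology-of-X(C)} from Proposition~\ref{prop:equivariant-coho-of-X(C)}: for $i \ge 2d-1$, the map $H^i(\pi)$ is an isomorphism. Applying the five-lemma to the morphism of distinguished triangles relating the mapping fibers of $i_\infty^*$ and $\widehat{i}_\infty^*$ (which differ only in the target, compared via $\pi$) should then yield
\[ H^i_\Wc(X, \ZZ(n)) \cong \widehat{H}^i_\fg(X, \ZZ(n)) \qquad \text{for } i \ge 2d. \]

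For the remaining step, I would verify the hat analog of Proposition~\ref{prop:tensoring-RGammafg-with-Z/m-and-Q}~(2), whose proof goes through verbatim because the complexes of $\QQ$-vector spaces appearing in the defining triangle are killed by $\otimes^\mathbf{L} \ZZ/m\ZZ$: this gives an isomorphism $R\widehat{\Gamma}_c(X_\et, \ZZ(n)) \otimes^\mathbf{L} \ZZ/m\ZZ \xrightarrow{\cong} R\widehat{\Gamma}_\fg(X, \ZZ(n)) \otimes^\mathbf{L} \ZZ/m\ZZ$. Passing to $\varprojlim_r$ with $m = \ell^r$ and using $\ZZ(n)/\ell^r \cong j_{\ell!}\mu_{\ell^r}^{\otimes n}$, together with finite generation of $\widehat{H}^i_\fg(X,\ZZ(n))$ to commute $\varprojlim_r$ past $\otimes \ZZ/\ell^r$, yields $\widehat{H}^i_\fg(X, \ZZ(n)) \otimes \ZZ_\ell \cong \widehat{H}^i_c(X[1/\ell]_\et, \ZZ_\ell(n))$, exactly parallel to Corollary~\ref{cor:RGamma-fg-model-for-l-adic-cohomology}.

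The main obstacle is the bookkeeping of the $3 \times 4$ grid linking the usual and modified constructions, and confirming that the vertical columns (identity on the $\QQ$-vector-space complex, the projection $R\widehat{\Gamma}_c \to R\Gamma_c$, and the induced map on cones) really assemble into an honest morphism of triangles in $\DZ$. This demands a careful choice of models so that the induced map on cones is unambiguous; afterward the five-lemma and the $\ell$-adic completion are formal. One could also worry about uniqueness of $\widehat{i}_\infty^*$, but this is again supplied by Corollary~\ref{cor:TR3-TR1-with-uniqueness} since $R\widehat{\Gamma}_c(G_\RR, X(\CC), \ZZ(n))$ is a finite $2$-torsion complex by Proposition~\ref{prop:equivariant-coho-of-X(C)}, hence in particular almost perfect.
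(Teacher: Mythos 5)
Your proposal follows the paper's own proof essentially step for step: the same auxiliary complex $R\widehat{\Gamma}_\fg (X, \ZZ(n))$ defined as a cone of $\widehat{\alpha}_{X,n}$, the same lift $\widehat{i}^*_\infty = \pi \circ i^*_\infty$, the same five-lemma argument using that $H^i (\pi)$ is an isomorphism for $i \ge 2d-1$, and the same $\ell$-adic completion argument modeled on Corollary~\ref{cor:RGamma-fg-model-for-l-adic-cohomology}. The only inaccuracy is your closing remark that $R\widehat{\Gamma}_c (G_\RR, X(\CC), \ZZ(n))$ is ``almost perfect'': its cohomology is finite $2$-torsion but, being Tate cohomology of $G_\RR \cong \ZZ/2$, it is $2$-periodic and hence unbounded below, so it is not almost perfect in the sense of Definition~\ref{dfn:almost-of-(co)finite-type}; this is harmless, since defining $\widehat{i}^*_\infty$ as the composition with $\pi$ makes any uniqueness argument unnecessary (and in any case $\Hom_\DZ$ from a complex of $\QQ$-vector spaces to a complex with finite cohomology vanishes).
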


\begin{corollary}
  One has $H^i_\Wc (X, \ZZ(n)) = 0$ for $i > 2d+1$.

  \begin{proof}
    It suffices to verify that $H^i_\Wc (X, \ZZ(n)) \otimes \ZZ_\ell = 0$
    for each prime $\ell$. Thanks to the isomorphism
    \eqref{eqn:l-adic-completion-of-H-Wc}, this reduces to
    $\widehat{H}_c^i (X [1/\ell]_\et, \ZZ_\ell (n)) = 0$ for $i > 2d+1$,
    which is true for reasons of cohomological dimension
    \cite[Expos\'{e}~X, Th\'{e}or\`{e}me~6.2]{SGA4}. We note that if $\ell = 2$ and
    $X (\RR) \ne \emptyset$, then the usual \'{e}tale cohomology has finite
    $2$-torsion in arbitrarily high degrees. It is important that we consider
    here the \emph{modified} cohomology with compact support
    $\widehat{H}_c^i (-)$. To obtain the corresponding statement, combine the
    arguments from \cite[Expos\'{e}~X]{SGA4} with the well-known computations of
    modified cohomology for number fields; cf. \cite[Chapter~II]{Milne-ADT} and
    \cite{Artin-Verdier-1964}, \cite{Mazur-1973}.
  \end{proof}
\end{corollary}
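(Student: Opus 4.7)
The plan is to combine the preceding lemma with a standard cohomological dimension bound, reducing the vanishing to a classical $\ell$-adic statement one prime at a time. Since $H^i_\Wc (X, \ZZ(n))$ was shown to be a finitely generated abelian group, it vanishes if and only if $H^i_\Wc (X, \ZZ(n)) \otimes \ZZ_\ell = 0$ for every prime $\ell$. I would therefore fix $\ell$ and $i > 2d+1$ and work $\ell$-adically.

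For such $\ell$ and $i$, the isomorphism \eqref{eqn:l-adic-completion-of-H-Wc} of the previous lemma gives
\[
  H^i_\Wc (X, \ZZ(n)) \otimes \ZZ_\ell \cong
  \widehat{H}_c^i (X[1/\ell]_\et, \ZZ_\ell(n)),
\]
so the task reduces to showing that the right-hand side vanishes for $i > 2d+1$. Since $X[1/\ell]$ is an arithmetic scheme of (absolute) dimension $d$ over $\Spec\ZZ[1/\ell]$, this is exactly the bound on $\ell$-adic cohomological dimension furnished by \cite[Expos\'{e}~X, Th\'{e}or\`{e}me~6.2]{SGA4}.

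The delicate point, and the one I expect to be the main obstacle, is the case $\ell = 2$ together with $X(\RR) \ne \emptyset$. In that situation the ordinary étale cohomology with compact support $H^i_c (X[1/2]_\et, \mu_{2^r}^{\otimes n})$ has finite $2$-torsion in arbitrarily high degrees coming from the real places, so the naive cohomological dimension bound does not hold literally. This is precisely why the preceding lemma is formulated in terms of the \emph{modified} cohomology with compact support $\widehat{H}^i_c$: the modification absorbs the archimedean $2$-torsion. To make the reduction rigorous, I would combine the generic-fibre argument of \cite[Expos\'{e}~X]{SGA4} with the standard computation of the archimedean contribution to modified cohomology, as developed for number rings in \cite[Chapter~II]{Milne-ADT}, \cite{Artin-Verdier-1964}, and \cite{Mazur-1973}. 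Once this real-places bookkeeping is handled, the vanishing of $\widehat{H}_c^i$ above the expected range is immediate and completes the proof.
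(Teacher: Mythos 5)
Your proposal is correct and follows essentially the same route as the paper: reduce to vanishing of $H^i_\Wc (X, \ZZ(n)) \otimes \ZZ_\ell$ via finite generation, apply the isomorphism \eqref{eqn:l-adic-completion-of-H-Wc}, and invoke the cohomological dimension bound of \cite[Expos\'{e}~X, Th\'{e}or\`{e}me~6.2]{SGA4}, with the $\ell = 2$, $X(\RR) \ne \emptyset$ case handled by the modified cohomology and the classical archimedean computations of \cite[Chapter~II]{Milne-ADT}, \cite{Artin-Verdier-1964}, \cite{Mazur-1973}. Your explicit appeal to finite generation to justify checking prime by prime is exactly the implicit step in the paper's argument.
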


Summarizing the above observations, we obtain the following result.

\begin{proposition}
  \label{prop:RGammaWc-perfect}
  Conjecture $\mathbf{L}^c (X_\et,n)$ implies that $R\Gamma_\Wc (X, \ZZ(n))$
  for $n < 0$ is a perfect complex. More precisely,
  $H^i_\Wc (X, \ZZ(n))$ are finitely generated groups, and
  $H^i_\Wc (X, \ZZ(n)) = 0$ for $i \notin [0, 2\dim X + 1]$.
\end{proposition}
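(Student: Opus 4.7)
The proposition is essentially a compilation of the three preceding results in this subsection, since being a perfect complex means having bounded, finitely generated cohomology. The plan is to simply assemble these statements.

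First, I would invoke the first lemma of this subsection to conclude that each $H^i_\Wc (X, \ZZ(n))$ is finitely generated. Concretely, the defining distinguished triangle
\[ R\Gamma_\Wc (X, \ZZ(n)) \to R\Gamma_\fg (X, \ZZ(n)) \xrightarrow{i_\infty^*} R\Gamma_c (G_\RR, X(\CC), \ZZ(n)) \to R\Gamma_\Wc (X, \ZZ(n))[1] \]
yields a long exact sequence whose outer terms are finitely generated by Proposition~\ref{prop:equivariant-coho-of-X(C)} and Proposition~\ref{prop:RGammafg-almost-perfect}, forcing the middle term to be finitely generated as well.

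Second, the vanishing $H^i_\Wc (X, \ZZ(n)) = 0$ for $i < 0$ is already given by the second lemma above: one uses the same long exact sequence, combined with the vanishing $H^i_c (X_\et, \ZZ(n)) = H^i_c (G_\RR, X(\CC), \ZZ(n)) = 0$ in negative degrees and the fact that $H^{1-i} (X_\et, \ZZ^c (n))$ is finite $2$-torsion (so has no $\QQ$-dual) for $i < 0$ by Proposition~\ref{prop:motivic-cohomology-duality-consequences}.

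Third, the vanishing $H^i_\Wc (X, \ZZ(n)) = 0$ for $i > 2d+1$ is the content of the preceding corollary, established through the $\ell$-adic identification \eqref{eqn:l-adic-completion-of-H-Wc}. Combining these three inputs yields the proposition. No new argument is needed: the only delicate point—already addressed in that corollary—is the case $\ell = 2$ with $X (\RR) \ne \emptyset$, where one must pass to the \emph{modified} cohomology with compact support $\widehat{H}_c^i$ to kill the finite $2$-torsion that otherwise persists in arbitrarily high degrees; this was precisely the reason for introducing the comparison with Tate cohomology in the proof of the previous lemma.
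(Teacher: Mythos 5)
Your proposal is correct and coincides with the paper's own proof: the proposition is stated there as a summary of the three preceding results (finite generation via the defining triangle, vanishing for $i<0$, and vanishing for $i>2\dim X+1$ via the $\ell$-adic identification with modified cohomology with compact support), exactly as you assemble them.
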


\subsection*{Rational coefficients}

\begin{proposition}
  There is a non-canonical splitting
  \[ R\Gamma_\Wc (X, \ZZ(n)) \otimes \QQ \cong
    \RHom (R\Gamma (X_\et, \ZZ^c (n)), \QQ) [-1] \oplus
    R\Gamma_c (G_\RR, X (\CC), \QQ (n)) [-1]. \]

  \begin{proof}
    The distinguished triangle defining $R\Gamma_\Wc (X, \ZZ(n))$ becomes after
    tensoring with $\QQ$
    \begin{multline*}
      R\Gamma_\Wc (X, \ZZ (n))\otimes \QQ \to
      R\Gamma_\fg (X, \ZZ(n))\otimes \QQ \xrightarrow{i_\infty^*\otimes\QQ = 0}
      R\Gamma_c (G_\RR, X (\CC), \ZZ(n))\otimes \QQ \\
      \to R\Gamma_\Wc (X, \ZZ (n))\otimes \QQ [1]
    \end{multline*}
    which yields a non-canonical splitting \cite[Chapitre~II,
      Corollaire~1.2.6]{Verdier-thesis}
    \[ R\Gamma_\Wc (X, \ZZ (n))\otimes \QQ \cong
      R\Gamma_\fg (X, \ZZ(n))\otimes \QQ \oplus
      R\Gamma_c (G_\RR, X (\CC), \ZZ(n)) [-1]\otimes \QQ, \]
    and we have already established in
    Proposition~\ref{prop:tensoring-RGammafg-with-Z/m-and-Q} that
    \[ R\Gamma_\fg (X, \ZZ (n)) \otimes \QQ \cong
      \RHom (R\Gamma (X_\et, \ZZ^c (n)), \QQ) [-1]. \qedhere \]
  \end{proof}
\end{proposition}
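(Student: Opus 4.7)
The plan is to start from the distinguished triangle that defines $R\Gamma_\Wc (X, \ZZ(n))$ and tensor it with $\QQ$. Since tensoring with $\QQ$ is exact and preserves distinguished triangles, we obtain
\[ R\Gamma_\Wc (X, \ZZ (n))\otimes \QQ \to
   R\Gamma_\fg (X, \ZZ(n))\otimes \QQ \xrightarrow{i_\infty^*\otimes\QQ}
   R\Gamma_c (G_\RR, X (\CC), \ZZ(n))\otimes \QQ \to [+1]. \]

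The key input is Proposition~\ref{i-infty-is-torsion}, which states that $i_\infty^*$ is torsion. Tensoring a torsion morphism with $\QQ$ kills it, so $i_\infty^* \otimes \QQ = 0$. A standard fact in triangulated categories (namely, a distinguished triangle whose connecting map is zero splits) then gives a non-canonical isomorphism
\[ R\Gamma_\Wc (X, \ZZ (n))\otimes \QQ \cong
   R\Gamma_\fg (X, \ZZ(n))\otimes \QQ \;\oplus\;
   R\Gamma_c (G_\RR, X (\CC), \ZZ(n))[-1]\otimes \QQ. \]
I would cite here \cite[Chapitre~II, Corollaire~1.2.6]{Verdier-thesis} as the reference for this splitting, following the conventions already used in the paper.

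To finish, I would identify the two summands. For the first, Proposition~\ref{prop:tensoring-RGammafg-with-Z/m-and-Q}(1) directly gives
\[ R\Gamma_\fg (X, \ZZ (n)) \otimes \QQ \cong
   \RHom (R\Gamma (X_\et, \ZZ^c (n)), \QQ) [-1]. \]
For the second, it suffices to observe that
$R\Gamma_c (G_\RR, X(\CC), \ZZ(n)) \otimes \QQ \cong R\Gamma_c (G_\RR, X(\CC), \QQ(n))$,
since $\ZZ(n)\otimes \QQ = \QQ(n)$ as $G_\RR$-modules and the formation of $R\Gamma_c(G_\RR, X(\CC), -)$ commutes with exact localizations at $\QQ$ (this is implicit in Proposition~\ref{prop:equivariant-coho-of-X(C)}). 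Assembling these identifications yields the claimed splitting.

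There is no real obstacle here: the argument is essentially formal once Theorem~\ref{theorem-II} (via Proposition~\ref{i-infty-is-torsion}) and Proposition~\ref{prop:tensoring-RGammafg-with-Z/m-and-Q} are available. The only minor point to be careful about is the non-canonicity of the splitting, which is intrinsic to the use of the splitting lemma in a triangulated category; this is why the statement only asserts a non-canonical isomorphism.
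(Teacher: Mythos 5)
Your proposal is correct and follows essentially the same route as the paper's own proof: tensor the defining triangle with $\QQ$, use Proposition~\ref{i-infty-is-torsion} to kill $i_\infty^*\otimes\QQ$, invoke the Verdier splitting, and identify $R\Gamma_\fg(X,\ZZ(n))\otimes\QQ$ via Proposition~\ref{prop:tensoring-RGammafg-with-Z/m-and-Q}. Your extra remark identifying $R\Gamma_c(G_\RR,X(\CC),\ZZ(n))\otimes\QQ$ with $R\Gamma_c(G_\RR,X(\CC),\QQ(n))$ makes explicit a step the paper leaves implicit, which is fine.
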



\section{Known cases of Conjecture $\mathbf{L}^c (X_\et,n)$}
\label{sec:known-cases-of-Lc-Xet-n}

Since the main constructions of this paper assume Conjecture $\mathbf{L}^c
  (X_\et,n)$, we relate it here to other conjectures about the finite generation
of \'{e}tale motivic cohomology formulated in the literature, and also describe
certain schemes $X$ for which $\mathbf{L}^c (X_\et,n)$ holds unconditionally.

\vspace{1em}

Instead of our $\mathbf{L}^c (X_\et, -)$, Flach and Morin state in \cite[\S
  3]{Flach-Morin-2018} a slightly different conjecture $\mathbf{L} (X_\et, -)$.
For proper regular $X$ of pure dimension $d$, the following is a reformulation
of $\mathbf{L} (X_\et, d - n)$ \cite[Conjecture~3.2,
  Lemma~3.3]{Flach-Morin-2018} in terms of $\ZZ^c (n)$.

\begin{conjecture}
  \label{equivalent-conjecture-1}
  For a proper regular arithmetic scheme $X$ and
  $n < 0$, the groups $H^i (X_\et, \ZZ^c (n))$ are finitely generated for
  $i \le -2n+1$.
\end{conjecture}

A more precise conjectural description of \'{e}tale motivic cohomology is
\cite[Conjecture~4.12]{Geisser-2017}, which can be written for $\ZZ^c (n)$ as
follows:

\begin{conjecture}
  \label{equivalent-conjecture-2}
  For a proper regular arithmetic scheme $X$ and $n < 0$, one has
  \[ H^i (X_\et, \ZZ^c (n)) = \begin{cases}
      \text{finitely generated}, & i \le -2n,     \\
      \text{finite},             & i = -2n + 1,   \\
      \text{cofinite type},      & i \ge -2n + 2.
    \end{cases} \]
\end{conjecture}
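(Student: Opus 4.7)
The statement refines the finite generation conjecture $\mathbf{L}^c(X_\et, n)$ by describing the full structure of the cohomology in every degree. Under the identification \eqref{eqn:Zc(n)-vs-Z(d-n)}, writing $m \dfn d - n > 0$, the claim translates into one about \'{e}tale motivic cohomology in positive weight $m$: one expects $H^j(X_\et, \ZZ(m))$ to be finitely generated for $j \le 2m$, finite at $j = 2m+1$, and of cofinite type for $j \ge 2m+2$. The plan is to attack this via the distinguished triangle
\[ \ZZ(m) \to \QQ(m) \to \QQ/\ZZ(m) \to \ZZ(m)[1] \]
on $X_\et$ and the comparison between \'{e}tale and Zariski motivic cohomology furnished by the Beilinson--Lichtenbaum theorem (a consequence of Bloch--Kato/Voevodsky).

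In the range $j \le 2m$, the first step would be to establish finite generation of the Zariski motivic cohomology $H^j(X_{\text{Zar}}, \ZZ(m))$; this is a motivic analog of the Bass conjecture, presently known only in restricted settings (smooth projective varieties over finite fields by Geisser--Levine, $0$-dimensional arithmetic schemes by class field theory, etc.). Beilinson--Lichtenbaum then gives an isomorphism $H^j(X_{\text{Zar}}, \ZZ(m)) \xrightarrow{\cong} H^j(X_\et, \ZZ(m))$ for $j \le m+1$ and controls the higher range up to finitely generated corrections, transferring finite generation to the \'{e}tale side throughout $j \le 2m$.

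In the range $j \ge 2m + 2$, the Zariski motivic cohomology vanishes (cycles of codimension $m$ live in degrees $\le 2m$), so the triangle above reduces $H^j(X_\et, \ZZ(m))$ to $H^{j-1}(X_\et, \QQ/\ZZ(m))$ modulo the image of $H^j(X_\et, \QQ(m))$. These groups are of cofinite type by the $\ell$-adic finiteness results of \cite[Theorem~3]{Kahn-2003} combined with the cohomological dimension bounds from \cite[Expos\'{e}~X, Th\'{e}or\`{e}me~6.2]{SGA4}. The transitional degree $j = 2m+1$ receives contributions from both the finitely generated Zariski part and the cofinite type $\QQ/\ZZ$ part, and a direct analysis of the overlap forces it to be finite.

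The main obstacle is the very first step: finite generation of Zariski motivic cohomology in positive weight is itself a deep Bass-type conjecture, essentially as far from being settled in general as $\mathbf{L}^c(X_\et, n)$. Consequently, $\mathbf{L}'(X_\et, d-n)$ can be established unconditionally only in the same classes of schemes discussed in Section~\ref{sec:known-cases-of-Lc-Xet-n}; the finer structural description beyond finite generation follows from the argument sketched above once that input is available.
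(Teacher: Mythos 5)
The statement you are trying to prove is not a theorem of the paper at all: it is a \emph{conjecture}, namely Geisser's structural conjecture \cite[Conjecture~4.12]{Geisser-2017} rewritten via the identification \eqref{eqn:Zc(n)-vs-Z(d-n)}. The paper offers no proof of it and does not claim one; the only thing it proves in this connection is Proposition~\ref{prop:Lc-Xet-n-vs-L-Xet-d-n}, the equivalence $\mathbf{L}^c (X_\et, n) \Leftrightarrow \mathbf{L} (X_\et, d-n) \Leftrightarrow \mathbf{L}' (X_\et, d-n)$ for $X$ proper and regular, and the nontrivial implications there are delegated to \cite[Proposition~3.4]{Flach-Morin-2018}, whose engine is Artin--Verdier duality, not the Beilinson--Lichtenbaum comparison. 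Your proposal, by its own admission, rests on an open Bass-type finite generation conjecture for motivic cohomology, so it establishes nothing: it is a plausibility sketch for why one believes $\mathbf{L}'$, not a proof, and it should be presented as such (or, better, recast as an argument for a conditional implication of the kind actually proved in Proposition~\ref{prop:Lc-Xet-n-vs-L-Xet-d-n}).

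Even read as a conditional argument, there is a concrete gap in the middle range. With $m = d-n$, Beilinson--Lichtenbaum identifies Zariski and \'{e}tale motivic cohomology only in degrees $j \le m+1$; for $m+1 < j \le 2m$ the discrepancy is governed by \'{e}tale cohomology with $\QQ/\ZZ(m)$-coefficients, which is merely of cofinite type, so your assertion that the comparison ``controls the higher range up to finitely generated corrections, transferring finite generation to the \'{e}tale side throughout $j \le 2m$'' is unjustified --- obtaining finite generation of the \emph{\'{e}tale} groups in exactly that range is the hard content of the conjecture, and in \cite{Flach-Morin-2018} it is extracted from the low-degree finite generation hypothesis by a duality argument rather than from Beilinson--Lichtenbaum alone. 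The same duality is what forces the degree $-2n+1$ group to be finite and the higher ones to be of cofinite type; your ``direct analysis of the overlap'' at the transitional degree is not an argument. Finally, note that the unconditional cases you point to in \S\ref{sec:known-cases-of-Lc-Xet-n} are cases of $\mathbf{L}^c (X_\et, n)$ for general arithmetic schemes, whereas $\mathbf{L}'$ as stated concerns proper regular $X$, where the passage between the two is again Proposition~\ref{prop:Lc-Xet-n-vs-L-Xet-d-n}.
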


This is consistent with our $\mathbf{L}^c (X_\et, n)$.

\begin{proposition}
  \label{prop:equivalent-conjectures}
  Let $X$ be a proper regular arithmetic scheme and $n < 0$.
  Then Conjecture $\mathbf{L}^c (X_\et, n)$,
  Conjecture~\ref{equivalent-conjecture-1},
  and Conjecture~\ref{equivalent-conjecture-2} are equivalent.

  \begin{proof}
    For the implication
    $\text{Conjecture~\ref{equivalent-conjecture-1}} \Longrightarrow \mathbf{L}^c (X_\et, n)$,
    by \cite[Proposition~3.4]{Flach-Morin-2018},
    Conjecture~\ref{equivalent-conjecture-1} implies Artin--Verdier duality
    \[ H^i (X_\et, \ZZ (n)) \cong \Hom (H^{2-i} (X_\et, \ZZ^c (n)), \QQ/\ZZ)
      \text{ up to finite }2\text{-torsion}, \]
    hence $H^i (X_\et, \ZZ^c (n))$ is finite $2$-torsion for $i \ge 2$, and in
    particular for $i > -2n + 1$.

    The implication $\text{Conjecture~\ref{equivalent-conjecture-1}}
      \Longrightarrow \text{Conjecture~\ref{equivalent-conjecture-2}}$ is also
    established in \cite[Proposition~3.4]{Flach-Morin-2018}.
  \end{proof}
\end{proposition}

We now list some special cases where Conjecture $\mathbf{L}^c (X_\et,n)$ is
known, and therefore gives unconditional results. We follow \cite[\S
  5]{Morin-2014} very closely. For an arithmetic scheme $X$, we formulate the
following conjecture, which is the conjunction of $\mathbf{L}^c (X_\et,n)$ for
all $n < 0$.

\begin{conjecture}
  $\mathbf{L}^c (X_\et)$: the cohomology groups $H^i (X_\et, \ZZ^c (n))$ are
  finitely generated for all $i \in \ZZ$ and $n < 0$.
\end{conjecture}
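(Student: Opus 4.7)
Since $\mathbf{L}^c(X_\et)$ is phrased as a conjecture, the plan is not to prove it outright but rather to organize its verification in the cases where it is accessible. The overall strategy is a standard dévissage: reduce the statement for a general arithmetic scheme to the case of smooth proper varieties over finite fields (and certain arithmetic analogues), and then invoke the known finite generation results in those cases.

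The first step is to exploit the localization sequence for cycle complexes. For any closed immersion $i\colon Z \hookrightarrow X$ with open complement $j\colon U \hookrightarrow X$, Geisser's localization (see \cite{Geisser-2010}) yields a distinguished triangle
\[ R\Gamma(Z_\et, \ZZ^c(n)) \to R\Gamma(X_\et, \ZZ^c(n)) \to R\Gamma(U_\et, \ZZ^c(n)) \to R\Gamma(Z_\et, \ZZ^c(n))[1]. \]
Taking the long exact sequence of cohomology, one sees that $\mathbf{L}^c(X_\et)$ for any two of $X, U, Z$ implies it for the third. By Noetherian induction on $\dim X$ and the generic flatness theorem, this reduces the problem to the case where $X$ is integral and as well-behaved as we wish.

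Next, one applies de Jong's alteration theorem to find a proper, generically finite morphism $X' \to X$ with $X'$ regular and either smooth over a finite field or generically smooth over $\Spec\ZZ$. A trace-norm argument (together with localization applied to the exceptional locus) allows one to transfer finite generation from $X'$ to $X$ up to controllable error terms. This reduces $\mathbf{L}^c(X_\et)$ to two model situations: a smooth projective variety over a finite field, and a regular, proper, flat arithmetic scheme.

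In the finite field case, the finiteness already recorded in Proposition~\ref{prop:motivic-cohomology-duality-consequences}(3)—which rests on Kahn's theorem \cite{Kahn-2003} together with the identification $\ZZ(n) = \QQ/\ZZ(n)[-1]$ and the duality of Theorem~\ref{theorem-I}—gives $\mathbf{L}^c(X_\et)$ unconditionally. In the arithmetic case, Proposition~\ref{prop:Lc-Xet-n-vs-L-Xet-d-n} translates the conjecture into $\mathbf{L}(X_\et, d-n)$ or $\mathbf{L}'(X_\et, d-n)$, which is known in several situations documented by Flach--Morin \cite{Flach-Morin-2018} and Geisser \cite{Geisser-2017}: for instance, for $\Spec \mathcal{O}_K$ with $K$ a number field (via class field theory and the finiteness of class groups and unit groups), and for smooth projective schemes over such rings satisfying suitable Beilinson--Soulé or Bass-type assumptions. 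The main obstacle throughout is precisely this last reduction: outside very low dimensions or the finite field case, the finite generation of $\ell$-adic étale motivic cohomology in all degrees is genuinely open, so $\mathbf{L}^c(X_\et)$ remains conjectural in general and can only be verified in the specific families assembled by this dévissage.
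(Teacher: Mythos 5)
This statement is a conjecture, and the paper does not (and cannot) prove it: what \S\ref{sec:known-cases-of-Lc-Xet-n} does is catalog the cases where it is known, namely the class $\mathcal{L} (\ZZ)$ generated by number rings $\Spec \mathcal{O}_F$ and varieties in $A (\FF_q)$ (products of curves and their Chow-theoretic relatives), closed under localization triples, disjoint unions, affine bundles and finite \'{e}tale hypercovers, plus geometrically cellular schemes; the actual finite generation arguments are delegated to \cite[Proposition~5.10]{Morin-2014}, ultimately Soul\'{e}'s method \cite{Soule-1984} and Geisser \cite{Geisser-2004}. Your localization d\'{e}vissage is sound as far as it goes---it is exactly operation $\mathcal{L}1$) in the paper's class---but your two terminal reductions fail. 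First, the finite-field endpoint is not unconditional, and your appeal to Proposition~\ref{prop:motivic-cohomology-duality-consequences}(3) is circular: that proposition is proved \emph{under} Conjecture $\mathbf{L}^c (X_\et, n)$, since Kahn's theorem \cite{Kahn-2003} only gives finiteness of $H^{i-1} (X_\et, \QQ/\ZZ (n))$, and passing from that to $H^i (X_\et, \ZZ^c (n))$ uses the duality of Theorem~\ref{theorem-I}, which assumes the conjecture. Finite generation (even finiteness) of $H^i (X_\et, \ZZ^c (n))$ for an arbitrary smooth projective variety over $\FF_q$ is genuinely open---it is of the same nature as Tate/Beilinson--Parshin-type conjectures---and is known only for the restricted class $A (\FF_q)$.

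Second, the alteration-plus-trace step does not transfer integral finite generation. For a proper generically finite cover $X' \to X$ of degree $\delta$, a transfer argument controls the relevant kernels and cokernels only up to $\delta$-torsion, and possibly infinite torsion subgroups are precisely what is at stake in $\mathbf{L}^c (X_\et, n)$; there is no known way to descend finite generation of $H^i (X'_\et, \ZZ^c (n))$ to $X$ along an alteration. This is why the paper's class is built from operations (blowups along subvarieties in the class, \'{e}tale hypercovers, affine bundles) for which integral comparison triangles exist, and why no reduction of the general conjecture to the proper regular or smooth proper finite-field case is claimed. Likewise, in the arithmetic endpoint, Proposition~\ref{prop:Lc-Xet-n-vs-L-Xet-d-n} only translates $\mathbf{L}^c (X_\et, n)$ into $\mathbf{L} (X_\et, d-n)$ for proper regular $X$; it does not make it known beyond the same special families. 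So your proposal overstates what the d\'{e}vissage can reach: as a description of how the known cases in the paper are organized it is close in spirit, but as a verification scheme it rests on two steps (the unconditional finite-field case and the alteration descent) that do not hold.
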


This is similar to \cite[Definition~5.8]{Morin-2014}, with the only difference
that Morin also requires the finite generation of $H^i (X_\et, \ZZ^c (0))$ for
$i \le 0$. Conjecture $\mathbf{L}^c (X_\et)$ is known for number rings, and
also for certain varieties over finite fields. As in \cite{Soule-1984},
\cite{Geisser-2004}, and \cite{Morin-2014}, we consider the following class.

\begin{definition}
  Let $A (\FF_q)$ be the full subcategory of the category of smooth projective
  varieties over a finite field $\FF_q$ generated by products of curves and the
  following operations.
  \begin{enumerate}
    \item[1)] If $X$ and $Y$ lie in $A (\FF_q)$, then $X \sqcup Y$ lies
          $A (\FF_q)$.
    \item[2)] If $Y$ lies in $A (\FF_q)$ and there are morphisms $c\colon X\to Y$
          and $c'\colon Y\to X$ in the category of Chow motives such that
          $c'\circ c\colon X\to X$ is a multiplication by constant, then
          $X$ lies in $A (\FF_q)$.
    \item[3)] If $\FF_{q^m}/\FF_q$ is a finite extension and
          $X_{\FF_q^m} = X \times_{\Spec \FF_q} \Spec \FF_{q^m}$ lies in
          $A (\FF_{q^m})$, then $X$ lies in $A (\FF_q)$.
    \item[4)] If $X$ and $Y$ lie in $A (\FF_q)$, and $Y$ is a closed subscheme of
          $X$, then the blowup of $X$ along $Y$ lies in $A (\FF_q)$.
  \end{enumerate}
\end{definition}

The following is similar to \cite[Definition~5.9]{Morin-2014}.

\begin{definition}
  Let $\mathcal{L} (\ZZ)$ be the full subcategory of arithmetic schemes
  generated by the following objects:
  \begin{itemize}
    \item the empty scheme $\emptyset$,
    \item $\Spec \mathcal{O}_F$ for a number field $F$,
    \item varieties $X \in A (\FF_q)$ for any finite field $\FF_q$,
  \end{itemize}
  and the following operations.
  \begin{enumerate}
    \item[$\mathcal{L}$1)] Let $X$ be an arithmetic scheme, $Z \subset X$ a closed
          subscheme and $U \dfn X\setminus Z$ its open complement. If two of three
          schemes $X,Z,U$ lie in $\mathcal{L} (\ZZ)$, then the third also lies in
          $\mathcal{L} (\ZZ)$.

    \item[$\mathcal{L}$2)] A finite disjoint union
          $X = \coprod_{1 \le j \le p} X_j$ lies in $\mathcal{L} (\ZZ)$ if and only if
          each $X_j$ lies in $\mathcal{L} (\ZZ)$.

    \item[$\mathcal{L}$3)] If $V \to U$ is an affine bundle and $U$ lies in
          $\mathcal{L} (\ZZ)$, then $V$ also lies in $\mathcal{L} (\ZZ)$.

    \item[$\mathcal{L}$4)] If $\{ U_i \to X \}_{i \in I}$ is a finite surjective
          family of \'{e}tale morphisms such that each $U_{i_0,\ldots,i_p}$ lies in
          $\mathcal{L} (\ZZ)$, then $X$ also lies in $\mathcal{L} (\ZZ)$.
  \end{enumerate}
\end{definition}

\begin{proposition}
  Conjecture $\mathbf{L}^c (X_\et)$ holds for any arithmetic scheme
  $X \in \mathcal{L} (\ZZ)$.

  \begin{proof}
    See the argument in \cite[Proposition~5.10]{Morin-2014}.
  \end{proof}
\end{proposition}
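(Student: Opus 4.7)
The plan is to proceed by structural induction on the construction of $\mathcal{L}(\ZZ)$. First I would dispose of the three classes of generators, then I would verify that each of the closure operations $\mathcal{L}1$–$\mathcal{L}4$ preserves the property ``$H^i(X_\et, \ZZ^c(n))$ is finitely generated for all $i \in \ZZ$ and all $n < 0$.''

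For the generators: the empty scheme is trivial. For $X = \Spec \mathcal{O}_F$, $X$ is regular of dimension one, so the identification \eqref{eqn:Zc(n)-vs-Z(d-n)} converts $H^i(X_\et, \ZZ^c(n))$ into the étale motivic cohomology $H^{2+i}(X_\et, \ZZ(1-n))$ in positive weight $1-n \ge 2$; finite generation is classical (Borel, Soulé, and the results recorded in \cite{Geisser-2004-Dedekind}, \cite{Geisser-2005}). For $X \in A(\FF_q)$, finite generation of $H^i(X_\et, \ZZ^c(n))$ for $n<0$ follows from the stronger fact that these groups are \emph{finite} (cited in the proof of Proposition~\ref{prop:motivic-cohomology-duality-consequences}, via \cite[Theorem~3]{Kahn-2003}); the closure of the property under disjoint unions, Chow-motivic summands, finite base change, and blowups is handled inductively exactly as in Soulé's and Geisser's work on motivic cohomology of smooth projective varieties over finite fields.

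For the operations, I would invoke the standard formalism of cycle complexes on $X_\et$ from \cite{Geisser-2010}. For $\mathcal{L}1$, the localization distinguished triangle
\[ R\Gamma(Z_\et, \ZZ^c(n)) \to R\Gamma(X_\et, \ZZ^c(n)) \to R\Gamma(U_\et, \ZZ^c(n)) \to R\Gamma(Z_\et, \ZZ^c(n))[1] \]
together with the long exact sequence in cohomology and the fact that finite generation is a two-out-of-three property in a short exact sequence of abelian groups (and hence in a long exact sequence whose adjacent terms are finitely generated) gives what we need. For $\mathcal{L}2$, cohomology splits as a direct sum. For $\mathcal{L}3$, I would use homotopy invariance of the cycle complexes $\ZZ^c(n)$ on the étale site (see \cite{Geisser-2010}), which implies that the pullback along an affine bundle $V \to U$ induces an isomorphism on $R\Gamma(-_\et, \ZZ^c(n))$ up to shift and twist, so finite generation transfers from $U$ to $V$. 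For $\mathcal{L}4$, I would apply the Čech--étale descent spectral sequence
\[ E_1^{p,q} = \bigoplus_{i_0,\ldots,i_p} H^q(U_{i_0,\ldots,i_p,\et}, \ZZ^c(n)) \Longrightarrow H^{p+q}(X_\et, \ZZ^c(n)), \]
which is bounded because the covering is finite; each $E_1^{p,q}$ is finitely generated by hypothesis, and boundedness guarantees that the abutment is also finitely generated.

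The main obstacle is the existence and formalism of these triangles and spectral sequences for the cycle complexes $\ZZ^c(n)$ on $X_\et$ in the generality of arbitrary arithmetic schemes; once they are in place (as they are in \cite{Geisser-2010}), the induction itself is mechanical. A minor subtlety is that one must check compatibility of the weights and shifts across the localization triangle (where $Z$ and $X$ may have different dimensions), but since our conjecture fixes a single weight $n < 0$ and asks finite generation in \emph{all} degrees, no bookkeeping on weights is necessary—this is the reason the unbounded form $\mathbf{L}^c(X_\et)$ behaves better under the six operations than its truncated cousins $\mathbf{L}(X_\et, d-n)$ and $\mathbf{L}'(X_\et, d-n)$. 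The argument then follows \cite[Proposition~5.10]{Morin-2014} verbatim.
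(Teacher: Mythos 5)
Your overall skeleton (structural induction over the generators and the closure operations $\mathcal{L}1$--$\mathcal{L}4$) is exactly the route the paper takes, since the paper's ``proof'' is nothing more than a pointer to \cite[Proposition~5.10]{Morin-2014}, whose argument has precisely this shape. However, two of your individual steps are not sound as written. First, for the generators $X \in A(\FF_q)$ you claim finiteness of $H^i(X_\et,\ZZ^c(n))$ by citing the proof of Proposition~\ref{prop:motivic-cohomology-duality-consequences}\,(3) via \cite[Theorem~3]{Kahn-2003}; this is circular, because that part of the proposition is proved \emph{assuming} $\mathbf{L}^c(X_\et,n)$ (Kahn's theorem only gives finiteness of $H^i(X_\et,\QQ/\ZZ(n))$, and passing from there to $\ZZ^c(n)$ uses the duality of Theorem~\ref{theorem-I}, which is conditional). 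The genuine unconditional input for the finite-field generators is the rational statement --- vanishing of $H^i(X_\et,\QQ^c(n))$ in the relevant range --- which is a Parshin-type result known only for Soul\'{e}'s class $A(\FF_q)$ via the Frobenius-eigenvalue argument of \cite{Soule-1984} and its extensions in \cite{Geisser-2004}; finiteness with finite coefficients (constructibility plus \cite{Geisser-2010}) then controls the torsion. If Kahn's theorem alone sufficed, there would be no reason to restrict to $A(\FF_q)$ at all, so this is not a cosmetic misattribution but the place where the actual content of the finite-field case lives.

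Second, in the step $\mathcal{L}4$ your assertion that the \v{C}ech descent spectral sequence ``is bounded because the covering is finite'' is false: even for a finite family $\{U_i \to X\}$ the \v{C}ech index $p$ runs over all non-negative integers (indices repeat), and $\ZZ^c(n)$ is not known to be bounded below (this is the Beilinson--Soul\'{e}-type issue flagged in the introduction), so neither convergence of the spectral sequence nor finiteness of the filtration on a fixed $H^{p+q}$ is automatic, and finite generation of the abutment does not follow formally from finite generation of the $E_1$-terms. One needs an extra argument here --- e.g.\ finite cohomological dimension away from the $2$-torsion at real places, or a trace/transfer argument combined with the unconditional finiteness of $H^i(X_\et,\ZZ^c/m(n))$ --- which is how \cite{Morin-2014} (following \cite{Soule-1984}) actually handles it. The remaining steps (localization triangle from purity in \cite{Geisser-2010} for $\mathcal{L}1$, direct sums for $\mathcal{L}2$, the affine-bundle isomorphism with its weight shift for $\mathcal{L}3$, and the use of \eqref{eqn:Zc(n)-vs-Z(d-n)} for $\Spec\mathcal{O}_F$, which is finite hence proper over $\Spec\ZZ$) are fine in substance, modulo the observation that quantifying over all $n<0$ is indeed what makes the weight shift in $\mathcal{L}3$ harmless.
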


Finally, we consider cellular schemes, as in \cite[\S 5.4]{Morin-2014}.

\begin{definition}
  Let $Y$ be a separated scheme of finite type over $\Spec k$ for a field
  $k$. We say that $Y$ \textbf{admits a cellular decomposition} if there exists
  a filtration of $Y$ by reduced closed subschemes
  $$Y^{red} = Y_N \supseteq Y_{N-1} \supseteq \cdots \supseteq Y_{-1} = \emptyset$$
  such that $Y_i\setminus Y_{i-1} \cong \AA^{r_i}_k$ is isomorphic to an affine
  space over $k$.

  We say that $Y$ is \textbf{geometrically cellular} if $Y_{\overline{k}} = Y
    \times_{\Spec k} \Spec \overline{k}$ admits a cellular decomposition. This is
  equivalent to the existence of a finite Galois extension $k'/k$ such that
  $Y_{k'}$ admits a cellular decomposition.

  Finally, given an $S$-scheme $X \to S$ that is separated and of finite type, we
  say that $X$ is \textbf{geometrically cellular over $S$} if for each $s \in S$
  the corresponding fiber $X_s$ is geometrically cellular.
\end{definition}

\begin{proposition}
  Let $Y$ be a separated scheme of finite type over $\Spec \FF_q$.
  If $Y$ is geometrically cellular, then $X \in \mathcal{L} (\ZZ)$,
  and in particular Conjecture $\mathbf{L}^c (Y_\et)$ holds.

  If $X \to \Spec \mathcal{O}_F$ is a flat, separated scheme of finite type over
  the ring of integers of a number field, and $X$ is geometrically cellular over
  $\mathcal{O}_F$, then $X \in \mathcal{L} (\ZZ)$, and in particular
  $\mathbf{L}^c (X_\et)$ holds.
\end{proposition}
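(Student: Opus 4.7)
The plan is to establish membership in $\mathcal{L}(\ZZ)$ by assembling $Y$ (resp.\ $X$) from its cells and fibers via the closure operations $\mathcal{L}1$--$\mathcal{L}4$; the conclusion about $\mathbf{L}^c$ then follows immediately from the preceding proposition.

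For the first part, suppose $Y/\FF_q$ is geometrically cellular. By definition there is a finite Galois extension $\FF_{q^m}/\FF_q$ such that $Y_{\FF_{q^m}}$ admits a cellular decomposition $Y_{\FF_{q^m}} = Y_N \supseteq \cdots \supseteq Y_{-1} = \emptyset$ with $Y_i \setminus Y_{i-1} \cong \AA^{r_i}_{\FF_{q^m}}$. Starting from $\Spec \FF_{q^m} \in \mathcal{L}(\ZZ)$, property $\mathcal{L}3$ gives $\AA^{r_i}_{\FF_{q^m}} \in \mathcal{L}(\ZZ)$, and induction on $i$ using $\mathcal{L}1$ applied to the open/closed decomposition $Y_{i-1} \subseteq Y_i \supseteq Y_i \setminus Y_{i-1}$ yields $Y_{\FF_{q^m}} \in \mathcal{L}(\ZZ)$. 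To descend, I would use $\mathcal{L}4$ for the surjective étale morphism $\{Y_{\FF_{q^m}} \to Y\}$: the iterated fiber products $Y_{\FF_{q^m}} \times_Y \cdots \times_Y Y_{\FF_{q^m}}$ are disjoint unions of copies of $Y_{\FF_{q^m}}$ (since $\FF_{q^m} \otimes_{\FF_q} \FF_{q^m}$ splits as a product of copies of $\FF_{q^m}$), hence lie in $\mathcal{L}(\ZZ)$ by $\mathcal{L}2$, so $\mathcal{L}4$ gives $Y \in \mathcal{L}(\ZZ)$.

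For the second part, the strategy is to stratify $\Spec \mathcal{O}_F$ into a suitable open $U$ and its finite closed complement. The generic fiber $X_F$ is geometrically cellular, so I fix a finite Galois extension $F'/F$ over which a cellular decomposition $X_{F'} = Z_N \supseteq \cdots \supseteq Z_{-1} = \emptyset$ is defined. A spreading-out argument produces a nonempty open $U \subseteq \Spec \mathcal{O}_F$ and a finite étale Galois cover $U' \to U$ (the restriction of $\Spec \mathcal{O}_{F'} \to \Spec \mathcal{O}_F$ to the preimage of $U$) such that the filtration extends to closed subschemes $Z_i \subseteq X_{U'}$ flat over $U'$, with each $Z_i \setminus Z_{i-1}$ a Zariski-locally trivial affine bundle of rank $r_i$ over $U'$. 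From $U' \in \mathcal{L}(\ZZ)$ (obtained from $\Spec \mathcal{O}_{F'} \in \mathcal{L}(\ZZ)$ by removing finitely many closed points via $\mathcal{L}1$ and $\mathcal{L}2$), $\mathcal{L}3$ places each stratum in $\mathcal{L}(\ZZ)$, and induction with $\mathcal{L}1$ gives $X_{U'} \in \mathcal{L}(\ZZ)$; exactly as in the first part, $\mathcal{L}4$ applied to $U' \to U$ yields $X_U \in \mathcal{L}(\ZZ)$. The complement $\Spec \mathcal{O}_F \setminus U$ is a finite set of closed points with finite residue fields, so $X \setminus X_U$ is a finite disjoint union of geometrically cellular varieties over finite fields, hence in $\mathcal{L}(\ZZ)$ by the first part combined with $\mathcal{L}2$. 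A final application of $\mathcal{L}1$ to $X_U \subseteq X \supseteq X \setminus X_U$ gives $X \in \mathcal{L}(\ZZ)$.

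The main technical obstacle is the spreading-out step in the arithmetic case: one must descend finitely many closed subschemes and their affine bundle structure from the generic fiber simultaneously to an open part of $\Spec \mathcal{O}_{F'}$. This is routine via limit techniques for finitely presented morphisms, but requires verifying that a single nonempty open $U'$ can be chosen so that \emph{all} strata of the filtration spread out and remain Zariski-locally trivial affine bundles over it; once this is in place, everything else is combinatorial bookkeeping of the $\mathcal{L}$-operations in the spirit of \cite{Morin-2014}.
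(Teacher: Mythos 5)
Your argument is correct and is essentially the proof behind the result the paper invokes: the paper gives no argument of its own but defers to \cite[Proposition~5.14]{Morin-2014}, and your scheme --- build the cellular strata with $\mathcal{L}3$ and assemble them with $\mathcal{L}1$, descend along the Galois covers $Y_{\FF_{q^m}}\to Y$ and $U'\to U$ using $\mathcal{L}2$/$\mathcal{L}4$ (the \v{C}ech fiber products splitting into copies of the cover), spread out the generic-fiber decomposition over an open of $\Spec\mathcal{O}_{F'}$, and treat the remaining closed fibers by the finite-field case --- is exactly that route. The only cosmetic point worth noting is that the cellular filtration lives on the reduced scheme and the excised closed fibers may be non-reduced, so one should remark that membership in $\mathcal{L}(\ZZ)$, like $H^i(X_\et,\ZZ^c(n))$ itself, is insensitive to nilpotents.
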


For a proof, we refer to \cite[Proposition~5.14]{Morin-2014}.


\section{Comparison with the complex of Flach and Morin}
\label{sec:comparison-with-FM}

This paper is based on the ideas of Flach and Morin \cite{Flach-Morin-2018},
who gave a similar construction of Weil-\'{e}tale cohomology $R\Gamma_\Wc (X,
  \ZZ(n))$ for a \emph{proper and regular} arithmetic scheme $X$, and for
\emph{any integer weight} $n \in \ZZ$. In this section, we will go through the
definitions of \cite{Flach-Morin-2018}, to verify the following claim.

\begin{proposition}
  \label{prop:comparison-with-FM}
  Let $X$ be a proper, regular arithmetic scheme, and $n < 0$. Assume
  Conjecture $\mathbf{L}^c (X_\et, n)$. Then the Weil-\'{e}tale complex
  $R\Gamma_\Wc (X, \ZZ(n))$ defined above in {\rm \S\ref{sec:RGamma-Wc}}
  is isomorphic to the corresponding complex defined in
    {\rm \cite{Flach-Morin-2018}}.
\end{proposition}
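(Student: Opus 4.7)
The plan is to match the two constructions triangle by triangle. Under the hypothesis that $X$ is proper, regular, and of pure dimension $d$, the isomorphism \eqref{eqn:Zc(n)-vs-Z(d-n)} translates our cycle complex $R\Gamma(X_\et, \ZZ^c(n))$ into the Flach--Morin cycle complex $R\Gamma(X_\et, \ZZ(d-n))[2d]$, so the right-hand side of our Theorem~\ref{theorem-I} becomes the dual complex appearing in \cite{Flach-Morin-2018}. Both constructions present $R\Gamma_\Wc(X,\ZZ(n))$ as a mapping fiber of a morphism $i_\infty^*$ into $R\Gamma_c(G_\RR, X(\CC), \ZZ(n))$, so the task reduces to two sub-problems: (a) identify the source of $i_\infty^*$, and (b) verify that the morphisms agree under this identification.

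For (a), I would check that $R\Gamma_\fg(X, \ZZ(n))$ is isomorphic to Flach--Morin's $R\Gamma_W(\overline{X}_\et, \ZZ(n))$ up to finite $2$-torsion, as already indicated in the comparison table. Both are defined as cones of essentially the same morphism $\alpha_{X,n}$ from $\RHom(R\Gamma(X_\et, \ZZ^c(n)), \QQ[-2])$ to a modified version of $R\Gamma_c(X_\et, \ZZ(n))$. The only structural discrepancy is that Flach--Morin use the Artin--Verdier topology $\overline{X}_\et$, which eliminates the finite $2$-torsion in arbitrarily high degree coming from the archimedean places. By Remark~\ref{rmk:alpha-X-n-determined-by-cohomology}, each version of $\alpha_{X,n}$ is determined by its action on cohomology, so once the Artin--Verdier duality of Flach--Morin is compared to Geisser's duality via \eqref{eqn:Zc(n)-vs-Z(d-n)}, the two morphisms must coincide modulo the $2$-torsion correction. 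The uniqueness of cones in Proposition~\ref{prop:RGamma-fg-well-defined} then upgrades this to a canonical isomorphism of complexes in $\DZ$.

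For (b), I would invoke Proposition~\ref{prop:uniqueness-of-i-infty}: our $i_\infty^*$ is the unique morphism fitting into the diagram \eqref{eqn:triangle-defining-i-infty}, and the Flach--Morin morphism is characterized by the same uniqueness property in their setting. Both morphisms ultimately arise from the comparison of \'{e}tale and analytic topologies encoded by $\alpha^*$ in Definition~\ref{dfn:u-infty}. Hence under the isomorphism from (a), the two $i_\infty^*$'s must agree, and the mapping fibers $R\Gamma_\Wc(X,\ZZ(n))$ are isomorphic (non-canonically, as is inherent to the definition of a mapping fiber).

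The hard part will be carefully bookkeeping the translation between conventions: one must verify that Flach--Morin's $\alpha_{X,n}$, defined via Artin--Verdier duality on $\overline{X}_\et$ and the cycle complex $\ZZ(d-n)$, pulls back to our $\alpha_{X,n}$ defined via Geisser's duality on $X_\et$ and the cycle complex $\ZZ^c(n)$. Since both dualities are compatible with \eqref{eqn:Zc(n)-vs-Z(d-n)} and differ only by finite $2$-torsion coming from archimedean contributions, this compatibility reduces, via Remark~\ref{rmk:alpha-X-n-determined-by-cohomology} together with the almost perfectness of the relevant complexes, to a bounded-degree verification on cohomology. Once this compatibility is in hand, the rest of the argument is essentially formal manipulation of the defining distinguished triangles.
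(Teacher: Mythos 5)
Your first step --- comparing the two duality morphisms on cohomology via Remark~\ref{rmk:alpha-X-n-determined-by-cohomology} and the compatibility of Geisser's duality with the Artin--Verdier duality through \eqref{eqn:Zc(n)-vs-Z(d-n)} --- is exactly how the paper begins, and it is sound. The gap is in the conclusion you draw from it in (a). The complexes $R\Gamma_\fg (X, \ZZ(n))$ and Flach--Morin's $R\Gamma_W (\overline{X}, \ZZ(n))$ are \emph{not} isomorphic in $\DZ$ when $X(\RR) \ne \emptyset$: theirs is a perfect complex, while yours has finite $2$-torsion in arbitrarily high degrees (Proposition~\ref{prop:RGammafg-almost-perfect}), so ``isomorphic up to finite $2$-torsion'' cannot be upgraded to an isomorphism. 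Moreover, Proposition~\ref{prop:RGamma-fg-well-defined} (and Corollary~\ref{cor:TR3-TR1-with-uniqueness}) only gives uniqueness of the cone of a \emph{fixed} morphism; here $\alpha_{X,n}$ and $\overline{\alpha}_{X,n}$ have genuinely different targets, $R\Gamma (X_\et, \ZZ(n))$ versus $R\Gamma (\overline{X}_\et, \ZZ(n))$, so their cones differ. What the commutativity of the comparison square actually yields is not an isomorphism but a morphism of triangles whose third column has cone $R\Gamma (X(\RR), \tau_{\ge n+1} R\widehat{\pi}_* \ZZ(n))$, as in \eqref{eqn:cones-of-alphas}.

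Because of this, step (b) does not parse as stated: Flach--Morin's morphism $\overline{\iota}_\infty^*$ goes from $R\Gamma_W (\overline{X}, \ZZ(n))$ to $R\Gamma_W (X_\infty, \ZZ(n))$, and \emph{both} its source and its target differ from your $R\Gamma_\fg (X, \ZZ(n))$ and $R\Gamma_c (G_\RR, X(\CC), \ZZ(n))$ by the same archimedean correction term, so there is no ``isomorphism from (a)'' under which the two morphisms could be said to agree. The actual content of the comparison --- which your proposal never confronts --- is that these two corrections cancel when one passes to mapping fibers, and that Flach--Morin's $\overline{u}_\infty^*$ is itself only specified (non-uniquely) by the diagram \eqref{eqn:definition-of-u-bar-infty}, so one is free to choose it compatibly. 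The paper exploits precisely this: one first chooses $\overline{\iota}_\infty^*$ so that the front face of the prism in Figure~\ref{fig:comparison-with-FM} is a morphism of triangles, then \emph{declares} $\overline{u}_\infty^* = \overline{\iota}_\infty^* \circ f$, and only then do the two fibers $R\Gamma_\Wc (\overline{X}, \ZZ(n))$ and $R\Gamma_\Wc (X, \ZZ(n))$ become isomorphic by the triangulated five-lemma. Appealing to the uniqueness statement of Proposition~\ref{prop:uniqueness-of-i-infty} on each side separately cannot substitute for this, since the two uniqueness statements live over different source--target pairs.
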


From now on we tacitly assume Conjecture $\mathbf{L}^c (X_\et, n)$, which is
also equivalent to the assumptions on motivic cohomology in
\cite{Flach-Morin-2018} (see Proposition~\ref{prop:equivalent-conjectures}).
Flach and Morin consider the case of a proper and regular arithmetic scheme $X$
of equal dimension $d$. In this case, we follow \cite[Remark
  3.11]{Flach-Morin-2018} to reformulate their constructions in terms of
complexes $\ZZ^c (n)$.

Moreover, they work with the Artin--Verdier \'{e}tale topos $\overline{X}_\et$,
whose definition and basic properties can be found in \cite[\S
  6]{Flach-Morin-2018}. They consider a morphism
\[ \overline{\alpha}_{X,n}\colon
  \RHom (R\Gamma (X, \ZZ^c (n)), \QQ [-2]) \to
  R\Gamma (\overline{X}_\et, \ZZ (n)), \]
defined in a similar way to our $\alpha_{X,n}$ (Definition~\ref{def:RGamma-fg})
using a duality similar to our Theorem~\ref{theorem-I}.

The notation in \cite{Flach-Morin-2018} and in this paper is intentionally the
same for various objects and morphisms. However, in this section we will write,
for example, $\overline{\alpha}_{X,n}$ to denote the morphism of Flach and
Morin, to distinguish it from our $\alpha_{X,n}$, etc. An overline indicates
that the corresponding thing comes from \cite{Flach-Morin-2018} and has
something to do with the Artin--Verdier \'{e}tale topos.

\begin{lemma}
  The square
  \begin{equation}
    \label{eqn:alpha-vs-alpha-bar-square}
    \begin{tikzcd}
      \RHom (R\Gamma (X, \ZZ^c (n)), \QQ [-2]) \ar{r}{\overline{\alpha}_{X,n}}\ar{d}{id} & R\Gamma (\overline{X}_\et, \ZZ(n))\ar{d} \\
      \RHom (R\Gamma (X, \ZZ^c (n)), \QQ [-2]) \ar{r}{\alpha_{X,n}} & R\Gamma (X_\et, \ZZ(n))
    \end{tikzcd}
  \end{equation}
  commutes.

  \begin{proof}
    We recall from Remark~\ref{rmk:alpha-X-n-determined-by-cohomology} that
    $\alpha_{X,n}$ is determined by the maps at the level of cohomology
    $H^i (\alpha_{X,n})$. The same is true for $\overline{\alpha}_{X,n}$, for
    the same reasons. Now \cite[Theorem~3.5]{Flach-Morin-2018} defines
    \begin{multline*}
      H^i (\overline{\alpha}_{X,n})\colon
      \Hom (H^{2-i} (X, \ZZ^c (n)), \QQ) \xrightarrow{\cong}
      \Hom (H^{2-i} (\overline{X}_\et, \ZZ^c (n)), \QQ) \to \\
      \Hom (H^{2-i} (\overline{X}_\et, \ZZ^c (n)), \QQ/\ZZ) \xleftarrow{\cong}
      H^i (\overline{X}_\et, \ZZ (n)),
    \end{multline*}
    where the last isomorphism is the duality
    \cite[Corollary~6.26]{Flach-Morin-2018}. Similarly, our morphism
    $\alpha_{X,n}$ gives
    \begin{multline*}
      H^i (\alpha_{X,n})\colon
      \Hom (H^{2-i} (X, \ZZ^c (n)), \QQ) \xrightarrow{\cong}
      \Hom (H^{2-i} (X_\et, \ZZ^c (n)), \QQ) \to \\
      \Hom (H^{2-i} (X_\et, \ZZ^c (n)), \QQ/\ZZ) \xleftarrow{\cong}
      \widehat{H}^i_c (X_\et, \ZZ (n)) \to
      H^i (X_\et, \ZZ (n)).
    \end{multline*}

    The groups $\widehat{H}^i_c (X_\et, \ZZ (n))$ and $H^i (\overline{X}_\et, \ZZ
      (n))$ are different, but the duality in terms of $H^i (\overline{X}_\et, \ZZ
      (n))$ is compatible with the duality in terms of $\widehat{H}^i_c (X_\et, \ZZ
      (n))$ (see \cite[Theorem~6.24]{Flach-Morin-2018}): we have a commutative
    diagram
    \[ \begin{tikzcd}
        R\widehat{\Gamma}_c (X_\et, \ZZ/m\ZZ(n)) \ar{d}\ar{r}{\cong} & \RHom (R\Gamma (X_\et, \ZZ/m\ZZ^c (n)), \QQ/\ZZ [-2])\ar{d} \\
        R\Gamma (\overline{X}_\et, \ZZ/m\ZZ(n)) \ar{r}{\cong} & \RHom (R\Gamma (\overline{X}_\et, \ZZ/m\ZZ^c (n)), \QQ/\ZZ [-2])
      \end{tikzcd} \]
    and the diagram
    \[ \begin{tikzcd}
        R\widehat{\Gamma}_c (X_\et, \ZZ(n)) \ar{r}\ar{d} & R\Gamma (X_\et, \ZZ(n)) \\
        R\Gamma (\overline{X}_\et, \ZZ(n) \ar{ur}
      \end{tikzcd} \]
    commutes as well. We see that the diagram we are interested in commutes:
    \[ \begin{tikzcd}[column sep=1.25em]
        \Hom (H^{2-i} (X, \ZZ^c (n)), \QQ) \ar{r}\ar{d}{id} \ar[bend left=15]{rrr}{H^i (\overline{\alpha}_{X,n})} & H^{2-i} (\overline{X}_\et, \ZZ^c (n))^D & & H^i (\overline{X}_\et, \ZZ(n))\ar{d} \ar{ll}[swap]{\cong} \\
        \Hom (H^{2-i} (X, \ZZ^c (n)), \QQ) \ar{r} \ar[bend right=15]{rrr}[swap]{H^i (\alpha_{X,n})} & H^{2-i} (X_\et, \ZZ^c (n))^D \ar[dashed]{u} & \widehat{H}^i_c (X_\et, \ZZ(n)) \ar{l}[swap]{\cong}\ar{r}\ar[dashed]{ur} & H^i (X_\et, \ZZ(n))
      \end{tikzcd} \]
    For brevity, $\Hom (A,\QQ/\ZZ)$ is denoted here by $A^D$.
  \end{proof}
\end{lemma}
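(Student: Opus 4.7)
The plan is to reduce the commutativity of the square to a cohomology-level statement and then invoke the compatibility of the two duality theorems at the core of $\alpha_{X,n}$ and $\overline{\alpha}_{X,n}$. Since $\RHom(R\Gamma(X,\ZZ^c(n)),\QQ[-2])$ is a complex of $\QQ$-vector spaces, while both $R\Gamma_c(X_\et,\ZZ(n))$ (hence $\widehat{R\Gamma}_c(X_\et,\ZZ(n))$ up to finite $2$-torsion) and $R\Gamma(\overline{X}_\et,\ZZ(n))$ are almost of cofinite type in the sense of Definition~\ref{dfn:almost-of-(co)finite-type}, I would apply the rigidity result in the same spirit as Remark~\ref{rmk:alpha-X-n-determined-by-cohomology} (Lemma~\ref{lemma:morphisms-in-DAb-between-cplx-of-Q-vs-and-almost-cofinite-type-cplx}): any morphism from the source to either target in $\DZ$ is completely determined by the induced maps on cohomology. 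This reduces the commutativity of \eqref{eqn:alpha-vs-alpha-bar-square} to the commutativity of the corresponding square of abelian groups in each degree $i$.

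Next I would write out both $H^i(\alpha_{X,n})$ and $H^i(\overline{\alpha}_{X,n})$ explicitly. In each case the composition factors as: the projection $\QQ \twoheadrightarrow \QQ/\ZZ$ applied to $\Hom(H^{2-i}(X_\et,\ZZ^c(n)),-)$ (or its Artin--Verdier analogue, but these groups agree by the definition of $\overline{X}_\et$), followed by the inverse of a duality pairing. For $\alpha_{X,n}$ this pairing is Theorem~\ref{theorem-I} (composed with the canonical projection $R\widehat{\Gamma}_c \to R\Gamma_c$), while for $\overline{\alpha}_{X,n}$ it is the Flach--Morin Artin--Verdier duality (\cite[Corollary~6.26]{Flach-Morin-2018}).

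The crux is therefore to check the following compatibility: the two dualities fit into a commutative square
\[
\begin{tikzcd}[column sep=small]
R\widehat{\Gamma}_c(X_\et,\ZZ/m\ZZ(n))\ar{r}{\cong}\ar{d} & \RHom(R\Gamma(X_\et,\ZZ/m\ZZ^c(n)),\QQ/\ZZ[-2])\ar{d}{id} \\
R\Gamma(\overline{X}_\et,\ZZ/m\ZZ(n))\ar{r}{\cong} & \RHom(R\Gamma(\overline{X}_\et,\ZZ/m\ZZ^c(n)),\QQ/\ZZ[-2])
\end{tikzcd}
\]
together with a companion triangle asserting that the natural morphism $R\widehat{\Gamma}_c(X_\et,\ZZ(n)) \to R\Gamma(X_\et,\ZZ(n))$ factors through $R\Gamma(\overline{X}_\et,\ZZ(n))$. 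Granting these, the cohomological squares built out of $H^i(\alpha_{X,n})$ and $H^i(\overline{\alpha}_{X,n})$ commute termwise after passing to the colimit in $m$ and identifying the motivic cohomology of $X_\et$ with that of $\overline{X}_\et$ (these agree since $\ZZ^c(n)$ for $n<0$ has no contribution from archimedean places in its definition).

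The main obstacle is the first of the two compatibilities above: verifying that the Flach--Morin duality over $\overline{X}_\et$ really is \emph{induced} from Geisser's duality via the morphism of topoi $\overline{X}_\et \to X_\et$. This is expected, since Flach and Morin construct their duality by essentially the same mechanism (base change and Artin--Verdier compactification of $\Spec\ZZ$), but it requires unwinding their construction in \cite[\S6]{Flach-Morin-2018} and matching their trace morphism with the one underlying \cite[Theorem~7.8]{Geisser-2010}. Once this compatibility is in place, the rigidity argument of the first paragraph then promotes the cohomological commutativity to an identity of morphisms in $\DZ$.
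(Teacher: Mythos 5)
Your proposal is correct and follows essentially the same route as the paper: reduce to cohomology via the rigidity of morphisms from a complex of $\QQ$-vector spaces (Remark~\ref{rmk:alpha-X-n-determined-by-cohomology}), write out $H^i(\alpha_{X,n})$ and $H^i(\overline{\alpha}_{X,n})$ explicitly, and invoke the compatibility of the Artin--Verdier duality over $\overline{X}_\et$ with the duality phrased via $R\widehat{\Gamma}_c(X_\et,-)$. The compatibility you single out as the main obstacle needs no unwinding of trace morphisms: it is exactly the content of \cite[Theorem~6.24]{Flach-Morin-2018}, which the paper simply cites, together with the factorization $R\widehat{\Gamma}_c(X_\et,\ZZ(n)) \to R\Gamma(\overline{X}_\et,\ZZ(n)) \to R\Gamma(X_\et,\ZZ(n))$.
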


Taking the cones of $\overline{\alpha}_{X,n}$ and $\alpha_{X,n}$, we obtain
respectively the complex $R\Gamma_W (\overline{X}, \ZZ (n))$ of Flach and Morin
\cite[Definition~3.6]{Flach-Morin-2018} and our complex $R\Gamma_\fg (X,
  \ZZ(n))$ (Definition~\ref{def:RGamma-fg} above).

The square \eqref{eqn:alpha-vs-alpha-bar-square} induces the following diagram
with distinguished rows and columns (cf.
\cite[Proposition~1.4.6]{Neeman-2001}):
\begin{equation}
  \label{eqn:cones-of-alphas}
  \begin{tikzcd}[column sep=1em,font=\small]
    {[R\Gamma (X, \ZZ^c (n)), \QQ [-2]]} \ar{r}{\overline{\alpha}_{X,n}}\ar{d}{id} & R\Gamma (\overline{X}_\et, \ZZ(n))\ar{d}\ar{r}{f} & R\Gamma_W (\overline{X}, \ZZ(n)) \ar{r}\ar[dashed]{d} & {[-1]}\ar{d}{id} \\
    {[R\Gamma (X, \ZZ^c (n)), \QQ [-2]]} \ar{r}{\alpha_{X,n}}\ar{d} & R\Gamma (X_\et, \ZZ(n)) \ar{r}{g}\ar{d} & R\Gamma_\fg (X, \ZZ(n)) \ar{r}\ar{d} & {[-1]}\ar{d} \\
    0\ar{r}\ar{d} & R\Gamma (X (\RR), \tau_{\ge n+1} R \widehat{\pi}_* \ZZ (n)) \ar{r}{id}\ar{d} & R\Gamma (X (\RR), \tau_{\ge n+1} R \widehat{\pi}_* \ZZ (n)) \ar{r}\ar{d} & 0\ar{d} \\
    {[R\Gamma (X, \ZZ^c (n)), \QQ [-1]]} \ar{r} & R\Gamma (\overline{X}_\et, \ZZ(n))[1]\ar{r}{f[1]} & R\Gamma_W (\overline{X}, \ZZ(n))[1] \ar{r} & {[0]}
  \end{tikzcd}
\end{equation}

Then \cite[Definition~3.23]{Flach-Morin-2018} considers a morphism
$\overline{u}^*_\infty$ defined via
\begin{equation}
  \label{eqn:definition-of-u-bar-infty}
  \begin{tikzcd}[column sep=1em]
    R\Gamma (\overline{X}_\et, \ZZ(n)) \ar{r}\ar[dashed]{d}[swap]{\exists}{\overline{u}_\infty^*} & R\Gamma (X_\et, \ZZ(n)) \ar{r}\ar{d}{u_\infty^*} & R\Gamma (X(\RR), \tau_{\ge n+1} R \widehat{\pi}_* \ZZ (n)) \ar{r}\ar{d}{id} & {[+1]} \ar[dashed]{d}{\overline{u}_\infty^* [1]} \\
    R\Gamma_W (X_\infty, \ZZ (n)) \ar{r} & R\Gamma (G_\RR, X (\CC), \ZZ (n)) \ar{r} & R\Gamma (X(\RR), \tau_{\ge n+1} R \widehat{\pi}_* \ZZ (n)) \ar{r} & {[+1]}
  \end{tikzcd}
\end{equation}
Here the complex $R\Gamma_W (X_\infty, \ZZ(n))$ is \emph{defined} via the bottom
triangle.

Then \cite[Proposition~3.24]{Flach-Morin-2018} and our
Proposition~\ref{prop:uniqueness-of-i-infty} above establish the existence and
uniqueness of morphisms $\overline{\iota}_\infty^*$ and $i_\infty^*$ which make
the triangles below commutative, and then the Weil-\'{e}tale complexes are
defined as mapping fibers of $\overline{\iota}_\infty^*$ and $i_\infty^*$:

\[ \begin{tikzcd}[column sep=1em]
    R\Gamma_\Wc (\overline{X}, \ZZ (n))\ar{d} & & R\Gamma_\Wc (X, \ZZ (n))\ar{d} \\
    R\Gamma_W (\overline{X}, \ZZ(n)) \ar[dashed]{d}{\overline{\iota}_\infty^*} & R\Gamma (\overline{X}_\et, \ZZ(n)) \ar{dl}{\overline{u}_\infty^*}\ar{l}[swap]{f} & R\Gamma_\fg (X, \ZZ(n))  \ar[dashed]{d}{\iota_\infty^*} & R\Gamma (X_\et, \ZZ(n)) \ar{dl}{u_\infty^*}\ar{l}[swap]{g} \\
    R\Gamma_W (X_\infty, \ZZ (n))\ar{d} & & R\Gamma (G_\RR, X (\CC), \ZZ (n))\ar{d} \\
    R\Gamma_\Wc (\overline{X}, \ZZ (n))[1] & & R\Gamma_\Wc (X, \ZZ (n))[1] \\
  \end{tikzcd} \]

In order to compare the two resulting complexes, we note that
$\overline{u}_\infty^*$ is only defined via
\eqref{eqn:definition-of-u-bar-infty}, so in the diagram below from
Figure~\ref{fig:comparison-with-FM}, we can first choose
$\overline{\iota}_\infty^*$ such that the front face gives a morphism of
triangles. Then we can \emph{declare} $\overline{u}_\infty^*$ to be the
composition $\overline{\iota}_\infty^* \circ f$. In this way everything
commutes, and we see that $R\Gamma_\Wc (\overline{X}, \ZZ(n)) \cong R\Gamma_\Wc
  (X, \ZZ(n))$.

\vspace{1em}

This concludes the proof of Proposition~\ref{prop:comparison-with-FM}. \qed

\begin{landscape}
  \begin{figure}
    \[ \begin{tikzcd}[column sep=0.5em, row sep=3em,font=\small]
        R\Gamma_\Wc (\overline{X}, \ZZ(n)) \ar{dd}\ar[dashed]{rr}{\cong} &[-1.5em] &[-1.5em] R\Gamma_\Wc (X, \ZZ(n))\ar{rr} &[-1.5em] &[-1.5em] 0 \ar{rr} &[-2.5em] & {[+1]} \\
        & R\Gamma (\overline{X}_\et, \ZZ(n)) \ar{rr}\ar[near end]{dddl}{\overline{u}_\infty^*}\ar{dl}[swap]{f} & & R\Gamma (X_\et, \ZZ(n)) \ar[near end]{dddl}{u_\infty^*}\ar{dl}[swap]{g}\ar{rr} && R\Gamma (X (\RR), \tau_{\ge n+1} R\widehat{\pi}_* \ZZ (n))\ar{dl}[swap]{id}\ar[near end]{dddl}{id}\ar{rr} &&[1em] {[+1]}\ar{dddl}{\overline{u}_\infty^* [1]} \ar{dl}[swap]{f[1]} \\
        R\Gamma_W (\overline{X}, \ZZ(n)) \ar[dashed]{dd}[swap]{\overline{\iota}_\infty^*}\ar[crossing over]{rr} & & R\Gamma_\fg (X, \ZZ(n)) \ar{dd}[swap]{i_\infty^*}\ar[crossing over]{rr}\ar[<-,crossing over]{uu} && R\Gamma (X (\RR), \tau_{\ge n+1} R\widehat{\pi}_* \ZZ (n)) \ar{dd}[swap]{id}\ar[<-,crossing over]{uu} \ar[crossing over]{rr} && {[+1]}\ar[<-,crossing over]{uu}\ar[dashed]{dd}[swap]{\overline{\iota}_\infty^* [1]} \\
        \\
        R\Gamma_W (X_\infty, \ZZ (n)) \ar{rr}\ar{dd} & & R\Gamma (G_\RR, X (\CC), \ZZ (n)) \ar{rr}\ar{dd} && R\Gamma (X (\RR), \tau_{\ge n+1} R\widehat{\pi}_* \ZZ (n)) \ar{rr}\ar{dd} && {[+1]}\ar{dd} \\
        \\
        R\Gamma_\Wc (\overline{X}, \ZZ(n))[1] \ar[dashed]{rr}{\cong} && R\Gamma_\Wc (X, \ZZ(n))[1]\ar{rr} && 0 \ar{rr} && {[+2]}
      \end{tikzcd} \]

    \caption{Comparison of the Weil-\'{e}tale complexes from
    \cite{Flach-Morin-2018} and this paper, denoted
    $R\Gamma_\Wc (\overline{X}, \ZZ(n))$ and $R\Gamma_\Wc (X, \ZZ(n))$
    respectively. The top face of the prism comes from
    \eqref{eqn:cones-of-alphas}. The arrow $\overline{\iota}_\infty^*$ is
    chosen so that the front face is commutative. Then set
    $\overline{u}_\infty^* = \overline{\iota}_\infty^* \circ f$ so that the
    back face is commutative and corresponds to
    \eqref{eqn:definition-of-u-bar-infty}.}
    \label{fig:comparison-with-FM}
  \end{figure}
\end{landscape}


\pagebreak
\appendix
\section{Some homological algebra}
\label{app:homological-algebra}

This appendix contains some basic results about the derived category of abelian
groups $\DZ$ which are used throughout the text. The following lemmas are
isolated from the proofs in \cite{Flach-Morin-2018}, with some modifications to
treat the $2$-torsion.

First, recall that every complex of abelian groups $A^\bullet$ (not necessarily
bounded) is quasi-isomorphic to its cohomology:
\begin{multline*}
  A^\bullet \cong \coprod_{i\in \ZZ} H^i (A^\bullet) [-i] \cong
  \prod_{i\in \ZZ} H^i (A^\bullet) [-i] \\
  = \Bigl(\cdots \to H^{i-1} (A^\bullet) \xrightarrow{0} H^i (A^\bullet)
  \xrightarrow{0} H^{i+1} (A^\bullet) \to \cdots\Bigr).
\end{multline*}
Here
$\coprod_{i\in \ZZ} H^i (A^\bullet) [-i] \cong \prod_{i\in \ZZ} H^i (A^\bullet) [-i]$
is the complex that has $H^i (A^\bullet)$ in $i$-th degree, which serves as both
product and coproduct of complexes $H^i (A^\bullet) [-i]$
concentrated in $i$-th degree. This gives us a useful expression for morphisms
in the derived category: since $\Hom_\DZ (A,B [i]) \cong \Ext_\ZZ^i (A,B)$,
and $\Ext_\ZZ^i (A,B) = 0$ for $i > 1$, we obtain
\begin{align}
  \notag \Hom_\DZ (A^\bullet, B^\bullet) & \cong \Hom_\DZ (\coprod_{i\in\ZZ} H^i (A^\bullet) [-i], \prod_{j\in\ZZ} H^j (B^\bullet) [-j])                                       \\
  \notag                                 & \cong \prod_{i\in \ZZ} \prod_{j\in \ZZ} \Hom_\DZ (H^i (A^\bullet), H^j (B^\bullet) [i-j])                                           \\
  \notag                                 & \cong \prod_{i\in \ZZ} \left(\Hom (H^i (A^\bullet), H^i (B^\bullet)) \oplus \Ext (H^i (A^\bullet), H^{i-1} (B^\bullet))\right)      \\
  \label{eqn:morphisms-in-D(Z)}          & \cong \prod_{i\in \ZZ} \Hom (H^i (A^\bullet), H^i (B^\bullet)) \oplus \prod_{i\in \ZZ} \Ext (H^i (A^\bullet), H^{i-1} (B^\bullet)).
\end{align}

\begin{lemma}
  \label{lemma:morphisms-inDAb-not-divisible}
  ~

  \begin{enumerate}
    \item[$1)$] If $C^\bullet$ and $C'^\bullet$ are almost perfect in the sense of
          Definition~{\rm\ref{dfn:almost-of-(co)finite-type}}, then the group
          $\Hom_\DZ (C^\bullet, C'^\bullet)$ has no nontrivial
          divisible subgroups.

    \item[$2)$] If $A^\bullet$ is a complex such that $H^i (A^\bullet)$ are
          finite-dimensional $\QQ$-vector spaces and $C^\bullet$ is a complex such
          that $H^i (C^\bullet)$ are finitely generated abelian groups, then the group
          $\Hom_\DZ (A^\bullet, C^\bullet)$ is divisible.
  \end{enumerate}

  \begin{proof}
    In 1), if $C^\bullet$ and $C'^\bullet$ are almost perfect, then
    $\Hom (H^i (C^\bullet), H^i (C'^\bullet))$ are finitely generated groups,
    $2$-torsion for $i \gg 0$. Writing
    $H^i (C^\bullet) \cong \ZZ^{\oplus r} \oplus G$,
    $H^{i - 1} (C'^\bullet) \cong \ZZ^{\oplus r'} \oplus G'$ for some $r, r'$
    and finite groups $G, G'$, we calculate that
    \[
      \Ext (\ZZ^{\oplus r} \oplus G, \ZZ^{\oplus r'} \oplus G') \cong
      \underbrace{\Ext (G, \ZZ)}_{\cong G}{}^{\oplus r'} \oplus \Ext (G, G')
    \]
    are finite groups, $2$-torsion for $i \gg 0$. It follows from
    \eqref{eqn:morphisms-in-D(Z)} that $\Hom_\DZ (C^\bullet, C'^\bullet)$ is a sum
    of a finitely generated group and a $2$-torsion group, so it cannot have
    nontrivial divisible subgroups.

    Similarly in 2), under our assumption, $\Hom (H^i (A^\bullet), H^i (C^\bullet))
      = 0$ for all $i$, and the calculation
    \[
      \Ext (\QQ^{\oplus r}, \ZZ^{\oplus s} \oplus G) \cong
      \Ext (\QQ, \ZZ)^{\oplus r s} \oplus
      \underbrace{\Ext (\QQ, G)}_{= 0}{}^{\oplus r}
    \]
    shows that $\Hom_\DZ (A^\bullet, C^\bullet)$ is a direct product of divisible
    groups $\Ext (\QQ, \ZZ)$, hence divisible.
  \end{proof}
\end{lemma}

Recall that Verdier's axiom (TR1) states that every morphism $v\colon A^\bullet
  \to B^\bullet$ can be completed to a distinguished triangle $A^\bullet
  \xrightarrow{u} B^\bullet \xrightarrow{v} C^\bullet \xrightarrow{w} A^\bullet
  [1]$. Axiom (TR3) states that for every commutative diagram with distinguished
rows
\begin{equation}
  \label{eqn:TR3-input}
  \begin{tikzcd}
    A^\bullet\ar{r}{u}\ar{d}{f} & B^\bullet\ar{r}{v}\ar{d}{g} & C^\bullet\ar{r}{w} & A^\bullet [1] \\
    A'^\bullet\ar{r}{u'} & B'^\bullet\ar{r}{v'} & C'^\bullet\ar{r}{w'} & A'^\bullet [1]
  \end{tikzcd}
\end{equation}
there exists some $h\colon C^\bullet \to C'^\bullet$, which gives a morphism of
distinguished triangles
\begin{equation}
  \label{eqn:TR3-output}
  \begin{tikzcd}
    A^\bullet\ar{r}{u}\ar{d}{f} & B^\bullet\ar{r}{v}\ar{d}{g} & C^\bullet\ar{r}{w}\ar[dashed]{d}{\exists h} & A^\bullet [1]\ar{d}{f [1]} \\
    A'^\bullet\ar{r}{u'} & B'^\bullet\ar{r}{v'} & C'^\bullet\ar{r}{w'} & A'^\bullet [1]
  \end{tikzcd}
\end{equation}

The cone $C^\bullet$ in (TR1) and the morphism $h$ in (TR3) are neither unique
nor canonical. Two different cones of the same morphism are necessarily
isomorphic, but the isomorphism between them is not unique, because it is
provided by (TR3). Let us recall a useful argument showing that things are
well-defined in some special cases.

\begin{lemma}[{$\approx$\cite[Proposition~1.1.9, Corollaire~1.1.10]{Beilinson-Bernstein-Deligne}}]
  \label{lemma:TR3-TR1-with-uniqueness-general-statement}

  Consider the derived category $\mathbf{D} (\mathcal{A})$ of an abelian category
  $\mathcal{A}$.

  \begin{enumerate}
    \item[$1)$] For a commutative diagram \eqref{eqn:TR3-input}, assume that the
          homomorphism of abelian groups
          \[ w^*\colon \Hom_{\mathbf{D} (\mathcal{A})} (A^\bullet [1], C'^\bullet) \to
            \Hom_{\mathbf{D} (\mathcal{A})} (C^\bullet, C'^\bullet) \]
          induced by $w$ is trivial. Then there exists a unique morphism $h\colon
            C^\bullet \to C'^\bullet$ that gives a morphism of triangles
          \eqref{eqn:TR3-output}.

    \item[$2)$] For a distinguished triangle
          $A^\bullet \xrightarrow{u} B^\bullet \xrightarrow{v} C^\bullet \xrightarrow{w} A^\bullet[1]$,
          assume that for any other cone $C'^\bullet$ of $u$ the morphism $w^*$ is
          trivial. Then the cone of $u$ is unique up to a unique isomorphism.
  \end{enumerate}

  \begin{proof}
    In 1), applying $\Hom_{\mathbf{D} (\mathcal{A})} (-, C'^\bullet)$ to the
    first distinguished triangle, we obtain an exact sequence of abelian groups
    \[ \Hom_{\mathbf{D} (\mathcal{A})} (A^\bullet [1], C'^\bullet) \xrightarrow{w^*}
      \Hom_{\mathbf{D} (\mathcal{A})} (C^\bullet, C'^\bullet) \xrightarrow{v^*}
      \Hom_{\mathbf{D} (\mathcal{A})} (B^\bullet, C'^\bullet). \]
    If $w^* = 0$, we conclude that $v^*$ is a monomorphism. This implies that there
    is a unique morphism $h$ such that $h\circ v = v'\circ g$. Now in 2), if
    $C^\bullet$ and $C'^\bullet$ are two different cones of $u$, we have a
    commutative diagram
    \[ \begin{tikzcd}
        A^\bullet\ar{r}{u}\ar{d}{id} & B^\bullet\ar{r}{v}\ar{d}{id} & C^\bullet\ar{r}{w}\ar[dashed]{d} & A^\bullet [1]\ar{d}{id} \\
        A^\bullet\ar{r}{u'} & B^\bullet\ar{r}{v'} & C'^\bullet\ar{r}{w'} & A^\bullet [1]
      \end{tikzcd} \]
    By the triangulated five-lemma, the dashed arrow is an isomorphism, and it is
    unique thanks to part 1).
  \end{proof}
\end{lemma}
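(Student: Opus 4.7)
The plan is to derive both parts directly from the long exact sequence obtained by applying $\Hom_{\mathbf{D}(\mathcal{A})}(-, C'^\bullet)$ to the first distinguished triangle in \eqref{eqn:TR3-input}, together with the standard axioms (TR3) and the triangulated five-lemma.

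For part 1), first I would invoke (TR3) to produce \emph{some} morphism $h\colon C^\bullet \to C'^\bullet$ fitting into \eqref{eqn:TR3-output}; this gives existence. For uniqueness, I would apply the contravariant cohomological functor $\Hom_{\mathbf{D}(\mathcal{A})}(-, C'^\bullet)$ to the top row of \eqref{eqn:TR3-input} to obtain the exact sequence
\[ \Hom_{\mathbf{D}(\mathcal{A})}(A^\bullet[1], C'^\bullet) \xrightarrow{w^*} \Hom_{\mathbf{D}(\mathcal{A})}(C^\bullet, C'^\bullet) \xrightarrow{v^*} \Hom_{\mathbf{D}(\mathcal{A})}(B^\bullet, C'^\bullet). \]
The hypothesis $w^* = 0$ makes $v^*$ injective. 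If $h$ and $h'$ are two choices completing the diagram, both satisfy $h \circ v = v' \circ g = h' \circ v$, i.e. $v^*(h) = v^*(h')$, so $h = h'$.

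For part 2), I would start with two distinguished triangles on the same morphism $u$:
\[ A^\bullet \xrightarrow{u} B^\bullet \xrightarrow{v} C^\bullet \xrightarrow{w} A^\bullet[1], \qquad A^\bullet \xrightarrow{u} B^\bullet \xrightarrow{v'} C'^\bullet \xrightarrow{w'} A^\bullet[1]. \]
By (TR3) applied with $f = \mathrm{id}_{A^\bullet}$ and $g = \mathrm{id}_{B^\bullet}$, there is some $h\colon C^\bullet \to C'^\bullet$ making the resulting ladder commute. The triangulated five-lemma then forces $h$ to be an isomorphism. For the uniqueness of $h$, I would apply part 1) with $f = \mathrm{id}$, $g = \mathrm{id}$: by hypothesis, $w^*\colon \Hom(A^\bullet[1], C'^\bullet) \to \Hom(C^\bullet, C'^\bullet)$ is trivial, so part 1) gives uniqueness of $h$.

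The main point that requires a little care is verifying that the hypothesis in part 2) (the vanishing of $w^*$ into \emph{any} other cone $C'^\bullet$ of $u$) is exactly what is needed to invoke part 1) for both existence and uniqueness; there is no real obstacle, since the hypothesis is tailored to this application, and the triangulated five-lemma is a standard consequence of the axioms.
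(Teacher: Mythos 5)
Your proposal is correct and follows essentially the same route as the paper: apply $\Hom_{\mathbf{D}(\mathcal{A})}(-,C'^\bullet)$ to the first triangle, use $w^*=0$ to make $v^*$ injective (hence uniqueness), get existence from (TR3), and deduce part 2) via the triangulated five-lemma plus part 1). Your explicit mention of (TR3) for existence only spells out what the paper leaves implicit, having recalled the axiom just before the lemma.
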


Here is a special case that we need.

\begin{corollary}
  \label{cor:TR3-TR1-with-uniqueness}
  Consider the derived category $\DZ$.

  \begin{enumerate}
    \item[$1)$] Suppose we have a commutative diagram with distinguished rows
          \eqref{eqn:TR3-input}, where $A^\bullet$ is a complex such that
          $H^i (A^\bullet)$ are finite-dimensional $\QQ$-vector spaces and
          $C^\bullet$, $C'^\bullet$ are almost perfect complexes in the sense of
          Definition~{\rm\ref{dfn:almost-of-(co)finite-type}}. Then there exists a
          unique morphism ${h\colon C^\bullet \to C'^\bullet}$ which gives a morphism
          of triangles \eqref{eqn:TR3-output}.

    \item[$2)$] For a distinguished triangle
          $$A^\bullet \xrightarrow{u} B^\bullet \xrightarrow{v} C^\bullet \xrightarrow{w} A^\bullet[1]$$
          assume that $A^\bullet$ is a complex such that $H^i (A^\bullet)$ are
          finite-dimensional $\QQ$-vector spaces and $C^\bullet$ is an almost perfect
          complex. Then the cone of $u$ is unique up to a unique isomorphism.
  \end{enumerate}

  \begin{proof}
    In this situation, by Lemma~\ref{lemma:morphisms-inDAb-not-divisible}, the
    group $\Hom_\DZ (C^\bullet, C'^\bullet)$ has no nontrivial divisible
    subgroups, and $\Hom_\DZ (A^\bullet [1], C'^\bullet)$ is divisible. This
    means that there are no nontrivial homomorphisms
    $\Hom_\DZ (A^\bullet [1], C'^\bullet) \to \Hom_\DZ (C^\bullet, C'^\bullet)$,
    and we can apply
    Lemma~\ref{lemma:TR3-TR1-with-uniqueness-general-statement}.
  \end{proof}
\end{corollary}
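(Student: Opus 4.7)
The plan is to deduce both parts from the abstract triangulated criterion of Lemma~\ref{lemma:TR3-TR1-with-uniqueness-general-statement}; the single hypothesis to check is the vanishing of the pullback map
\[ w^*\colon \Hom_\DZ (A^\bullet [1], C'^\bullet) \to \Hom_\DZ (C^\bullet, C'^\bullet). \]
Once this vanishing is in hand, the unique $h$ in part 1 and the uniqueness of the cone in part 2 both fall out formally from the two clauses of that lemma.

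To verify $w^* = 0$, I would combine the two halves of Lemma~\ref{lemma:morphisms-inDAb-not-divisible}. Its part 2, applied to $A^\bullet [1]$ (whose cohomology groups are still finite-dimensional $\QQ$-vector spaces) paired with the almost perfect target $C'^\bullet$, shows that $\Hom_\DZ (A^\bullet [1], C'^\bullet)$ is divisible; its part 1, applied to the almost perfect pair $(C^\bullet, C'^\bullet)$, shows that $\Hom_\DZ (C^\bullet, C'^\bullet)$ contains no nontrivial divisible subgroup. A homomorphism from a divisible group has divisible image, so $w^*$ must be zero, and Lemma~\ref{lemma:TR3-TR1-with-uniqueness-general-statement}.1 then delivers the unique $h$ in part 1.

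For part 2, I would observe that any other cone $C'^\bullet$ of $u$ is quasi-isomorphic to the given almost perfect cone $C^\bullet$, and since almost perfectness depends only on cohomology, $C'^\bullet$ is itself almost perfect. The argument above therefore applies to any pair of cones of $u$, so Lemma~\ref{lemma:TR3-TR1-with-uniqueness-general-statement}.2 yields the uniqueness of the cone up to a unique isomorphism.

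The only minor subtlety I anticipate is that Lemma~\ref{lemma:morphisms-inDAb-not-divisible}.2 is stated with a target having finitely generated cohomology, while almost perfect complexes may carry finite $2$-torsion in arbitrarily high degree. This is harmless: in the decomposition \eqref{eqn:morphisms-in-D(Z)} one has $\Hom (\QQ^m, T) = 0$ and $\Ext (\QQ^m, T) = 0$ for every finite abelian group $T$, so the $2$-torsion tail contributes nothing and the divisibility of $\Hom_\DZ (A^\bullet [1], C'^\bullet)$ persists. I do not expect any genuine obstacle beyond this bookkeeping.
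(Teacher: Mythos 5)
Your proposal is correct and is essentially the paper's own proof: you force $w^* = 0$ by playing the divisibility of $\Hom_\DZ (A^\bullet [1], C'^\bullet)$ (Lemma~\ref{lemma:morphisms-inDAb-not-divisible}, part 2, applied to the shift) against the absence of nontrivial divisible subgroups in $\Hom_\DZ (C^\bullet, C'^\bullet)$ (part 1), and then invoke Lemma~\ref{lemma:TR3-TR1-with-uniqueness-general-statement} for both the unique $h$ and the uniqueness of the cone. The ``subtlety'' you flag is actually vacuous: finite $2$-torsion groups are finitely generated, so an almost perfect complex satisfies the hypothesis of Lemma~\ref{lemma:morphisms-inDAb-not-divisible}, part 2, verbatim (no boundedness is assumed there), and your extra $\Hom/\Ext$ bookkeeping, while correct, is not needed.
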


\begin{lemma}
  \label{lemma:torsion-morphisms-in-D(Z)}
  Suppose that $A^\bullet$ and $B^\bullet$ are almost of cofinite type in the
  sense of Definition~{\rm\ref{dfn:almost-of-(co)finite-type}}. Then a morphism
  $f\colon A^\bullet\to B^\bullet$ is torsion (i.e. a torsion element in the
  group $\Hom_\DZ (A^\bullet, B^\bullet)$, i.e.  $f\otimes \mathbb{Q} = 0$) if
  and only if the morphisms $H^i (f)\colon H^i (A^\bullet) \to H^i (B^\bullet)$
  are torsion; that is, they are trivial on the maximal divisible subgroups:
  $$(H^i (f)_\div\colon H^i (A^\bullet)_\div \to H^i (B^\bullet)_\div) = 0.$$

  \begin{proof}
    We may write $H^i (A^\bullet) \cong (\QQ/\ZZ)^{\oplus r} \oplus G$
    and
    $H^{i-1} (B^\bullet) \cong (\QQ/\ZZ)^{\oplus s} \oplus H$ for some
    $r, s$ and some finite groups $G, H$. Now
    \[
      \Ext ((\QQ/\ZZ)^{\oplus r} \oplus G, (\QQ/\ZZ)^{\oplus s} \oplus H) \cong
      \underbrace{\Ext (\QQ/\ZZ, H)}_{\cong H}{}^{\oplus r} \oplus \Ext (G, H)
    \]
    is a finite group. It follows that tensoring \eqref{eqn:morphisms-in-D(Z)} with
    $\QQ$ kills $\prod_{i\in \ZZ} \Ext (H^i (A^\bullet), H^{i-1} (B^\bullet))$ and
    gives an isomorphism
    \begin{align*}
      \Hom_\DZ (A^\bullet, B^\bullet)\otimes \mathbb{Q} & \cong
      \prod_{i\in \mathbb{Z}} \Hom (H^i (A^\bullet), H^i (B^\bullet)) \otimes \mathbb{Q},                   \\
      f\otimes\QQ                                       & \mapsto (H^i (f) \otimes \QQ)_{i\in\ZZ}. \qedhere
    \end{align*}
  \end{proof}
\end{lemma}

\begin{lemma}
  \label{lemma:morphisms-in-DAb-between-cplx-of-Q-vs-and-almost-cofinite-type-cplx}
  If $A^\bullet$ is a complex of $\QQ$-vector spaces and $B^\bullet$ is a
  complex almost of cofinite type in the sense of
  Definition~{\rm\ref{dfn:almost-of-(co)finite-type}}, then there is an
  isomorphism of abelian groups
  \begin{align*}
    \Hom_\DZ (A^\bullet, B^\bullet) & \xrightarrow{\cong}
    \prod_{i\in \ZZ} \Hom (H^i (A^\bullet), H^i (B^\bullet)),       \\
    f                               & \mapsto (H^i (f))_{i\in \ZZ}.
  \end{align*}

  \begin{proof}
    If $H^i (A^\bullet)$ are
    $\QQ$-vector spaces and $H^{i-1} (B^\bullet)$ are groups of cofinite type,
    then the term $\Ext (H^i (A^\bullet), H^{i-1} (B^\bullet))$ in the formula
    \eqref{eqn:morphisms-in-D(Z)} vanishes by calculations similar to the above,
    as $\Ext (\QQ, \QQ/\ZZ) = \Ext (\QQ, G) = 0$ for finite $G$.
  \end{proof}
\end{lemma}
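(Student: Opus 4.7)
The plan is to start from the general decomposition of morphisms in $\DZ$ established earlier in the appendix, namely \eqref{eqn:morphisms-in-D(Z)}:
\[ \Hom_\DZ (A^\bullet, B^\bullet) \cong \prod_{i \in \ZZ} \Hom (H^i (A^\bullet), H^i (B^\bullet)) \oplus \prod_{i \in \ZZ} \Ext (H^i (A^\bullet), H^{i-1} (B^\bullet)). \]
Under this isomorphism, the map $f \mapsto (H^i (f))_{i \in \ZZ}$ is exactly the projection onto the first factor, which is already the target group of the claimed isomorphism. Thus the entire assertion reduces to checking that the second product on the right vanishes under our hypotheses.

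For each index $i$, the relevant Ext group is $\Ext (V, G)$ where $V \dfn H^i (A^\bullet)$ is a $\QQ$-vector space and $G \dfn H^{i-1} (B^\bullet)$ is a group of cofinite type (including the case where $G$ is finite $2$-torsion for $i \gg 0$, which is in particular of cofinite type). Using the $\QQ/\ZZ$-duality built into the definition of cofinite type, one may split $G \cong T \oplus (\QQ/\ZZ)^r$ with $T$ finite, so two subcases remain: $\Ext (V, \QQ/\ZZ)$ and $\Ext (V, T)$. The first vanishes because $\QQ/\ZZ$ is divisible, hence injective as a $\ZZ$-module. The second reduces, after writing $V \cong \bigoplus_I \QQ$ and decomposing $T$ as a finite direct sum of cyclic groups, to the single computation $\Ext (\QQ, \ZZ/n) = 0$.

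That final computation is the only step that requires any thought, but it is routine: applying $\Hom (\QQ, -)$ to $0 \to \ZZ \xrightarrow{n} \ZZ \to \ZZ/n \to 0$ and using $\Hom (\QQ, \ZZ) = \Hom (\QQ, \ZZ/n) = 0$, one identifies $\Ext (\QQ, \ZZ/n)$ with the cokernel of multiplication by $n$ on $\Ext (\QQ, \ZZ) \cong \widehat{\ZZ}/\ZZ$, which vanishes since $\widehat{\ZZ} = \ZZ + n\,\widehat{\ZZ}$. No genuine obstacle arises, and the qualifier \emph{almost} in \emph{almost of cofinite type} is benign: the possible finite $2$-torsion appearing in high degrees is still of cofinite type, so the same argument applies uniformly in $i$.
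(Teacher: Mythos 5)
Your proposal is correct and follows essentially the same route as the paper: both start from the decomposition \eqref{eqn:morphisms-in-D(Z)} and reduce the claim to the vanishing of $\Ext (H^i (A^\bullet), H^{i-1} (B^\bullet))$ when the first argument is a $\QQ$-vector space and the second is of cofinite type. The only difference is that you spell out this vanishing (via $\Ext(\QQ,\ZZ/n)=0$ and injectivity of $\QQ/\ZZ$), which the paper leaves as an unproved one-line assertion; your computation is accurate, though it could be shortened by noting that $\Ext(\QQ,\ZZ/n)$ is simultaneously a $\QQ$-vector space and $n$-torsion, hence zero.
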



\section{Cohomology with compact support}
\label{app:modified-cohomology-with-compact-support}

For any arithmetic scheme $f\colon X\to \Spec \ZZ$ there exists a
\textbf{Nagata compactification}
\cite{Conrad-Deligne-Nagata,Conrad-Deligne-Nagata-erratum} (see also
\cite[Expos\'{e}~XVII]{SGA4})
\[ \begin{tikzcd}
    X \ar[hookrightarrow]{rr}{j}\ar{dr}[swap]{f} & & \mathfrak{X} \ar{dl}{g} \\
    & \Spec \ZZ
  \end{tikzcd} \]
where $j$ is an open immersion and $g$ is a proper morphism.

\begin{definition}
  Let $X$ be an arithmetic scheme and let $\mathcal{F}$ be an abelian torsion
  sheaf on $X_\et$. Then one defines the
  \textbf{cohomology with compact support} of $\mathcal{F}$ via the complex
  \[
    R\Gamma_c (X_\et, \mathcal{F}) \dfn
    R\Gamma (\mathfrak{X}_\text{\it \'et}, j_! \mathcal{F}).
  \]
\end{definition}

For torsion sheaves, this does not depend on the choice of $j\colon X
  \hookrightarrow \mathfrak{X}$, but here we would like to fix this choice in
order to compare cohomology with compact support on $X_\et$ with the singular
cohomology with compact support on $X (\CC)$.

\subsection*{Comparison with the analytic cohomology}

\begin{definition}
  Given a Nagata compactification $j\colon X\hookrightarrow \mathfrak{X}$,
  we consider the corresponding open immersion
  $j (\CC)\colon X (\CC) \to \mathfrak{X} (\CC)$,
  and for a sheaf $\mathcal{F}$ on $X (\CC)$ we define
  \[ \Gamma_c (X (\CC), \mathcal{F}) \dfn
    \Gamma (\mathfrak{X} (\CC), j (\CC)_! \mathcal{F}). \]
  Similarly, for a $G_\RR$-equivariant sheaf on $X (\CC)$ we define
  \[ \Gamma_c (G_\RR, X (\CC), \mathcal{F}) \dfn
    \Gamma (G_\RR, \mathfrak{X} (\CC), j (\CC)_! \mathcal{F}). \]
\end{definition}

The canonical reference for the comparison between \'{e}tale and singular
cohomology is \cite[Expos\'{e}~XI, \S 4]{SGA4}, so we borrow some definitions
and notations from there. Let $X$ be an arithmetic scheme.

\begin{enumerate}
  \item The base change from $\Spec \ZZ$ to $\Spec \CC$ gives us a morphism of sites
        $$\gamma\colon X_{\CC,\text{\it \'{e}t}} \to X_\et.$$

  \item Let $X_\text{\it cl}$ be the site of \'{e}tale maps $f\colon U\to X (\CC)$. A
        covering family in $X_\text{\it cl}$ is a family of maps $\{ U_i \to U \}$ such
        that $U$ is the union of images of $U_i$.

        (We recall that in the analytic topology, $f\colon U\to X (\CC)$ is \textbf{\'{e}tale}
        if it is a \emph{local on the source homeomorphism}: for each $u \in U$ there
        exists an open neighborhood $u \ni V$ such that
        $\left.f\right|_V\colon V\to f(V)$ is a homeomorphism.)

        Since the inclusion of an open subset $U \subset X (\CC)$ is an \'{e}tale map,
        we have a fully faithful functor $X (\CC) \subset X_\text{\it cl}$, and the
        topology on $X (\CC)$ is induced by the topology on $X_\text{\it cl}$. This
        gives us a morphism of sites $\delta\colon X_\text{\it cl} \to X (\CC)$, which
        by the comparison lemma \cite[Expos\'{e}~III, Th\'{e}or\`{e}me~4.1]{SGA4}
        induces an equivalence of the corresponding categories of sheaves
        $$\delta_*\colon \mathbf{Sh} (X_\text{\it cl}) \to \mathbf{Sh} (X (\CC)).$$

  \item A morphism of schemes $f\colon X'_\CC \to X_\CC$ over $\Spec \CC$ is \'{e}tale
        if and only if the map $f (\CC)\colon X' (\CC) \to X (\CC)$ is \'{e}tale
        \cite[Expos\'{e}~XII, Proposition~3.1]{SGA1}, and therefore the functor $X'_\CC
          \rightsquigarrow X' (\CC)$ gives us a morphism of sites $$\epsilon\colon
          X_\text{\it cl} \to X_{\CC,\text{\it \'{e}t}}.$$
\end{enumerate}

\begin{definition}
  We define the functor
  $$\alpha^*\colon \mathbf{Sh} (X_\et) \to \mathbf{Sh} (G_\RR, X (\CC))$$
  via the composition
  \[ \begin{tikzcd}
      \mathbf{Sh} (X_\et) \ar{r}{\gamma^*} &
      \mathbf{Sh} (X_{\CC,\text{\it \'{e}t}}) \ar{r}{\epsilon^*} &
      \mathbf{Sh} (X_\text{\it cl}) \ar{r}{\delta_*}[swap]{\simeq} &
      \mathbf{Sh} (X (\CC))
    \end{tikzcd} \]
\end{definition}

As we start from a scheme over $\Spec \ZZ$ and base change to $\Spec \CC$, the
resulting sheaf on $X (\CC)$ is equivariant with respect to the complex
conjugation, hence an object in $\mathbf{Sh} (G_\RR, X (\CC))$. For the
definition of equivariant sheaves, we refer to the introduction.

\begin{lemma}
  \label{lemma:alpha-preserves-colimits}
  $\alpha^*$ preserves colimits.

  \begin{proof}
    $\alpha^*$ is the composition of the inverse image functors $\gamma^*$ and
    $\epsilon^*$ (which are left adjoint) and an equivalence $\delta_*$.
  \end{proof}
\end{lemma}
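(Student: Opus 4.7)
The plan is to observe that $\alpha^*$ is defined in the excerpt as a three-fold composition
\[ \mathbf{Sh} (X_\et) \xrightarrow{\gamma^*} \mathbf{Sh} (X_{\CC,\text{\it \'{e}t}}) \xrightarrow{\epsilon^*} \mathbf{Sh} (X_\text{\it cl}) \xrightarrow{\delta_*} \mathbf{Sh} (X (\CC)), \]
so it suffices to check each factor preserves colimits and then invoke the fact that a composition of colimit-preserving functors is colimit-preserving.

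For the first two factors, I would appeal to the standard fact about morphisms of sites: the inverse image functor is left adjoint to the direct image functor. Concretely, $\gamma$ is the morphism of sites induced by the base change $X_\CC \to X$, so $\gamma^*$ is left adjoint to $\gamma_*$, and similarly $\epsilon^*$ is left adjoint to $\epsilon_*$. Any left adjoint automatically preserves all colimits.

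For the third factor, $\delta_*$ is declared in the excerpt to be an equivalence of categories via the comparison lemma (SGA4, Expos\'{e}~III, Th\'{e}or\`{e}me~4.1), since $X (\CC)$ sits as a full subcategory in $X_\text{\it cl}$ on which the topology is induced. An equivalence of categories preserves all limits and colimits, so in particular $\delta_*$ preserves colimits.

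There is no real obstacle here; the statement is a structural consequence of the construction of $\alpha^*$. The only step one might worry about is verifying that the topology on $X (\CC)$ is genuinely induced from $X_\text{\it cl}$ so that the comparison lemma applies, but this is essentially built into the definition recalled earlier in the appendix, and is part of the standard SGA4 setup.
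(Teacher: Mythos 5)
Your argument is correct and is exactly the paper's proof: decompose $\alpha^*$ as $\delta_*\circ\epsilon^*\circ\gamma^*$, note that the inverse image functors $\gamma^*$ and $\epsilon^*$ are left adjoints and hence preserve colimits, and that the equivalence $\delta_*$ does too. Your extra remark on the comparison lemma merely makes explicit what the paper's definition of $\delta_*$ already builds in.
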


\begin{proposition}
  \label{prop:inverse-image-gamma}
  Given a sheaf $\mathcal{F}$ on $X_\et$, there exists a natural morphism
  $$\Gamma (X_\et, \mathcal{F}) \to \Gamma (G_\RR, X (\CC), \alpha^* \mathcal{F}),$$
  and similarly, for cohomology with compact support,
  $$\Gamma_c (X_\et, \mathcal{F}) \to \Gamma_c (G_\RR, X (\CC), \alpha^* \mathcal{F}).$$

  \begin{proof}
    If $j\colon X \hookrightarrow \mathfrak{X}$ is a Nagata compactification, we
    have the corresponding compactification
    $j (\CC)\colon X (\CC) \hookrightarrow \mathfrak{X} (\CC)$. The extension by
    zero morphism
    $j (\CC)_!\colon \mathbf{Sh} (X (\CC)) \to \mathbf{Sh} (\mathfrak{X} (\CC))$
    restricts to the subcategory of $G_\RR$-equivariant sheaves: if
    $\mathcal{F}$ is a $G_\RR$-equivariant sheaf on $X (\CC)$, then
    $j (\CC)_!  \mathcal{F}$ is a $G_\RR$-equivariant sheaf on
    $\mathfrak{X} (\CC)$. From the definition of $\alpha^*$, we see that that
    there is a commutative diagram
    \[ \begin{tikzcd}
        \mathbf{Sh} (X_\et) \ar{r}{\alpha^*}\ar{d}[swap]{j_!} & \mathbf{Sh} (G_\RR, X (\CC)) \ar{d}{j (\CC)_!} \\
        \mathbf{Sh} (\mathfrak{X}_\et) \ar{r}[swap]{\alpha^*_\mathfrak{X}} & \mathbf{Sh} (G_\RR, \mathfrak{X} (\CC))
      \end{tikzcd} \]
    ---this diagram commutes for representable \'{e}tale sheaves, and then every
    \'{e}tale sheaf is a colimit of representable sheaves, and $\alpha^*$,
    $j_!$, $\alpha^*_\mathfrak{X}$, $j (\CC)_!$ preserve colimits, as left
    adjoints.

    The morphism in question is given by
    \begin{multline*}
      \Gamma_c (X_\et, \mathcal{F}) \dfn \Gamma (\mathfrak{X}_\et, j_! \mathcal{F}) \to
      \Gamma (G_\RR, \mathfrak{X} (\CC), \alpha^*_\mathfrak{X} j_! \mathcal{F}) \\
      =
      \Gamma (G_\RR, \mathfrak{X} (\CC), j (\CC)_! \, \alpha^* \mathcal{F})
      \rdfn \Gamma_c (G_\RR, X (\CC), \alpha^* \mathcal{F}). \qedhere
    \end{multline*}
  \end{proof}
\end{proposition}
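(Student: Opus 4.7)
The plan is to construct both maps from the adjunction unit for inverse image functors along morphisms of sites, and to handle the compact support case by reducing to the proper case via a Nagata compactification.

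For the non-compact support map, I would exploit the fact that $\alpha^* = \delta_* \epsilon^* \gamma^*$, where $\gamma^*$ and $\epsilon^*$ are inverse image functors associated to morphisms of sites and $\delta_*$ is an equivalence. For any morphism of sites $f\colon Y \to X$, the unit $\mathcal{F} \to f_* f^*\mathcal{F}$ induces on global sections a natural map $\Gamma(X, \mathcal{F}) \to \Gamma(Y, f^*\mathcal{F})$. Composing the corresponding maps for $\gamma$, $\epsilon$, and $\delta$ yields a natural arrow $\Gamma(X_\et, \mathcal{F}) \to \Gamma(X(\CC), \alpha^*\mathcal{F})$. Since $X$ is defined over $\Spec\ZZ$, complex conjugation acts canonically on $\alpha^*\mathcal{F}$ (this is encoded in the definition of $\alpha^*$ through the base change along $\Spec\CC \to \Spec\ZZ$), and by naturality the image of the above map lies in the $G_\RR$-invariants, giving the desired factorization through $\Gamma(G_\RR, X(\CC), \alpha^*\mathcal{F})$.

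For the compact support version, fix a Nagata compactification $j\colon X \hookrightarrow \mathfrak{X}$ with its complex realization $j(\CC)\colon X(\CC) \hookrightarrow \mathfrak{X}(\CC)$. The key ingredient is a natural isomorphism
\[ \alpha^*_\mathfrak{X}\circ j_! \;\cong\; j(\CC)_! \circ \alpha^*, \]
both sides being compositions of left adjoints (and equivalences), hence colimit-preserving. Since every \'{e}tale sheaf on $X$ is a colimit of representables, it suffices to verify the identity on $h_U$ for an \'{e}tale $U \to X$, where both sides visibly yield the $G_\RR$-equivariant sheaf represented by the open immersion $U(\CC) \hookrightarrow \mathfrak{X}(\CC)$ equipped with the natural conjugation action. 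Granting this, the compact support morphism is defined as the composition
\[ \Gamma_c(X_\et, \mathcal{F}) \dfn \Gamma(\mathfrak{X}_\et, j_!\mathcal{F}) \to \Gamma(G_\RR, \mathfrak{X}(\CC), \alpha^*_\mathfrak{X} j_!\mathcal{F}) \cong \Gamma(G_\RR, \mathfrak{X}(\CC), j(\CC)_! \alpha^*\mathcal{F}) \rdfn \Gamma_c(G_\RR, X(\CC), \alpha^*\mathcal{F}), \]
where the middle arrow is the non-compact version applied to the proper scheme $\mathfrak{X}$.

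The main obstacle I anticipate is the verification of the base change isomorphism $\alpha^*_\mathfrak{X}\circ j_! \cong j(\CC)_!\circ \alpha^*$: one must carefully track the $G_\RR$-equivariant structure through the three site morphisms $\gamma, \epsilon, \delta$ composing $\alpha^*$, and check that equivariant extension by zero is compatible with the non-equivariant one at each intermediate stage. The cleanest route appears to be reduction to representable sheaves using the colimit-preservation of all functors involved; once this formal fact is established, both morphisms follow immediately and are visibly natural in $\mathcal{F}$.
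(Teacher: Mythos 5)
Your proposal is correct and follows essentially the same route as the paper: the compact-support map is obtained by composing the comparison morphism for the proper compactification $\mathfrak{X}$ with the base-change identification $\alpha^*_\mathfrak{X}\, j_! \cong j(\CC)_!\, \alpha^*$, which both you and the paper verify on representable sheaves and extend by colimit-preservation of the left adjoints (and the equivalence $\delta_*$). Your slightly more explicit unwinding of the non-compact case via adjunction units and $G_\RR$-invariance is just a spelled-out version of the comparison morphism the paper invokes, so there is no substantive difference.
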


The morphism $\alpha$ is also discussed in \cite[Appendix~A]{Flach-Morin-2018},
but Flach and Morin work with proper schemes; the above remarks are to make
sure that everything works fine for compactifications.

\subsection*{Modified \'{e}tale cohomology}

Here we briefly review the \textbf{modified \'{e}tale cohomology with compact
support} $R\widehat{\Gamma}_c (X_\et, -)$. It was introduced by Th.~Zink in
\cite[Appendix~2]{Haberland-1978} for the case of number rings $X = \Spec
  \mathcal{O}_{K,S}$, and it is also discussed in \cite[\S II.2]{Milne-ADT}. The
general definition for $X \to \Spec\ZZ$ is treated in \cite[\S
  6.7]{Flach-Morin-2018} and \cite[\S 2]{Geisser-Schmidt-2018}.

Thanks to the Leray spectral sequence $R\Gamma (\mathfrak{X}_\et, -) \cong
  R\Gamma (\Spec \ZZ_\et, -)\circ R g_*$, we have
\[
  R\Gamma_c (X_\et, \mathcal{F}) \dfn R\Gamma (\mathfrak{X}_\et, j_!\mathcal{F})
  \cong R\Gamma ((\Spec \ZZ)_\et, R f_! \mathcal{F}), \quad
  \text{where } Rf_! \mathcal{F} \dfn R g_* j_! \mathcal{F}.
\]

First we recall that for a finite group $G$ and a $G$-module $A$ the
corresponding group cohomology $H^i (G,A)$ (resp. Tate cohomology
$\widehat{H}^i (G,A)$) can be defined in terms of resolutions $P_\bullet$
(resp. complete resolutions $\widehat{P}_\bullet$) of $\ZZ$ by free $\ZZ
  G$-modules (see e.g. \cite[Chapter~VI]{Brown-1994}). More generally, if
$A^\bullet$ is a bounded (cohomological) complex of $G$-modules, we obtain a
\emph{double complex} of abelian groups $\Hom^{\bullet\bullet} (P_\bullet,
  A^\bullet)$ (resp. $\Hom^{\bullet\bullet} (\widehat{P}_\bullet, A^\bullet)$),
and it makes sense to define the corresponding \textbf{group hypercohomology}
(resp. \textbf{Tate hypercohomology}) via the complexes
\[ R\Gamma (G, A^\bullet) \dfn
  \Tot^\oplus (\Hom^{\bullet\bullet} (P_\bullet, A^\bullet)), \quad
  R\widehat{\Gamma} (G, A^\bullet) \dfn
  \Tot^\oplus (\Hom^{\bullet\bullet} (\widehat{P}_\bullet, A^\bullet)). \]

Now if $\mathcal{F}$ is an abelian sheaf on $(\Spec \ZZ)_\et$, then the
corresponding \textbf{modified cohomology with compact support} is
characterized by the distinguished triangle
\[ R\widehat{\Gamma}_c ((\Spec \ZZ)_\et, \mathcal{F}) \to
  R\Gamma ((\Spec \ZZ)_\et, \mathcal{F}) \to
  R\widehat{\Gamma} (G_\RR, v^* \mathcal{F}) \to
  R\widehat{\Gamma}_c ((\Spec \ZZ)_\et, \mathcal{F}) [1] \]
Here $v\colon \Spec \RR \to \Spec \ZZ$ is the canonical morphism, and $v^*
  \mathcal{F}$ is the corresponding sheaf on $(\Spec \RR)_\et$, which can be
viewed as a $G_\RR$-module by \cite[Expos\'{e}~VII, 2.3]{SGA4}, and
$R\widehat{\Gamma} (G_\RR, v^* \mathcal{F})$ denotes the corresponding Tate
cohomology.

In general, given an arithmetic scheme $X \to \Spec \ZZ$ and a torsion abelian
sheaf $\mathcal{F}$ on $X_\et$, we choose a Nagata compactification as above
and set
\[ R\widehat{\Gamma}_c (X_\et, \mathcal{F}) \dfn
  R\widehat{\Gamma}_c ((\Spec \ZZ)_\et, R f_! \mathcal{F}). \]
We have a natural morphism $$R\widehat{\Gamma}_c (X_\et, \mathcal{F}) \to
  R\Gamma_c (X_\et, \mathcal{F}),$$ which is an isomorphism if $X (\RR) =
  \emptyset$. In general, Tate cohomology $\widehat{H}^i (G_\RR, -)$ is
annihilated by multiplication by $2 = \# G_\RR$, and therefore $\widehat{H}^i_c
  (X_\et,\mathcal{F}) \to H^i_c (X_\et,\mathcal{F})$ has $2$-torsion kernel and
cokernel.

For canonicity and functoriality, I refer to \cite[\S 2]{Geisser-Schmidt-2018}.


\bigskip


\address{
  Alexey Beshenov\\
  Cosmos 2674\\
  Col. Jardines del Bosque\\
  Guadalajara, Jalisco\\
  Mexico C.P. 44520}
{cadadr@gmail.com}

\end{document}